\newcommand{\vol}{{\rm vol}}
\newcommand{\ord}{{\rm ord}}
\newcommand{\fa}{\mathfrak{a}}
\newcommand{\cO}{\mathcal{O}}
\newcommand{\bR}{\mathbb{R}}
\newcommand{\bC}{\mathbb{C}}
\newcommand{\bZ}{\mathbb{Z}}
\newcommand{\Val}{{\rm Val}}
\newcommand{\hvol}{{\widehat{\rm vol}}}
\newcommand{\bQ}{{\mathbb{Q}}}
\newcommand{\bP}{{\mathbb{P}}}
\newcommand{\cC}{{\mathcal{C}}}
\newcommand{\cA}{{\mathcal{A}}}
\newcommand{\cB}{{\mathcal{B}}}
\newcommand{\scr}{\mathscr}
\newcommand{\Hom}{{\rm Hom}}
\newcommand{\CY}{{\rm CY}}
\newcommand{\id}{{\rm id}}
\newcommand{\sddb}{{\sqrt{-1}\partial\bar{\partial}}}
\newcommand{\vphi}{{\varphi}}
\newcommand{\orb}{{\rm orb}}
\newcommand{\cE}{{\mathcal{E}}}
\newcommand{\tr}{\mathrm{tr}}
\newcommand{\bN}{{\mathbb{N}}}
\newcommand{\til}{\tilde}
\newcommand{\rk}{{\rm rk}}
\newtheorem{thm}{Theorem}[section]
\newtheorem{dfpr}[thm]{Definition-Proposition}
\newtheorem{lem}[thm]{Lemma}
\newtheorem{def-prop}[thm]{Definition-Proposition}
\newtheorem{cor}[thm]{Corollary}
\newtheorem{defn}[thm]{Definition}
\newtheorem{prop}[thm]{Proposition}
\newtheorem{conj}[thm]{Conjecture}
\newtheorem{rem}[thm]{Remark}
\newtheorem{exmp}[thm]{Example}
\newtheorem{nota}[thm]{Notations}
\begin{document}

\title{On the stability of extensions of tangent sheaves on K\"{a}hler-Einstein Fano / Calabi-Yau pairs}

\author{Chi Li}

\date{}

\maketitle

\begin{center}
{\it \footnotesize To Professor Gang Tian on the occasion of his sixtieth birthday}
\end{center}

\abstract{
Let $S$ be a smooth projective variety and $\Delta$ a simple normal crossing $\bQ$-divisor with coefficients in $(0,1]$. For any ample $\bQ$-line bundle $L$ over $S$,
we denote by $\scr{E}(L)$ the extension sheaf of the orbifold tangent sheaf $T_S(-\log(\Delta))$ by the structure sheaf $\cO_S$ with the extension class $c_1(L)$. We prove the following two results:
\begin{enumerate}
\item[(1)] if $-(K_S+\Delta)$ is ample and $(S, \Delta)$ is K-semistable, then for any $\lambda\in \bQ_{>0}$, the extension sheaf $\scr{E}({\lambda c_1(-(K_S+\Delta))})$ is slope semistable with respect to $-(K_S+\Delta)$;
\item[(2)] if $K_S+\Delta\equiv 0$, then for any ample $\bQ$-line bundle $L$ over $S$, $\scr{E}(L)$ is slope semistable with respect to $L$.
\end{enumerate}
These results generalize Tian's result where $-K_S$ is ample and $\Delta=\emptyset$. We give two applications of these results. The first is to study a question by Borbon-Spotti about the relationship between local Euler numbers and normalized volumes of log canonical surface singularities. We prove that the two invariants differ only by a factor $4$ when the log canonical pair is an orbifold cone over a marked Riemann surface. In particular we complete the computation of Langer's local Euler numbers for any line arrangements in $\bC^2$. The second application is to derive Miyaoka-Yau-type inequalities on K-semistable log-smooth Fano pairs and Calabi-Yau pairs, which generalize some Chern-number inequalities proved by Song-Wang.
}
\tableofcontents

\section{Introduction}

The Hitchin-Kobayashi correspondence states that a holomorphic vector bundle over a K\"{a}hler manifold admits a Hermitian-Einstein metric if and only if it is slope polystable. This was known by the works of Narashimhan-Seshadri, Donaldson and Uhlenbeck-Yau. Correspondingly, the Yau-Tian-Donaldson conjecture for Fano manifolds states that a smooth Fano manifold admits a K\"{a}hler-Einstein metric if and only if it is K-polystable. Due to many people's work, the latter conjecture has been solved (see \cite{Tia97, Ber15, CDS15, Tia15}). A K\"{a}hler-Einstein metric is naturally a Hermitian-Einstein metric on the tangent bundle. So if a Fano manifold admits a K\"{a}hler-Einstein metric then its tangent bundle is slope polystable. 
In \cite{Tia92}, Tian discovered a deeper phenomenon that the stability or the instability of the some natural extension sheaf of the tangent sheaf can be used to bound the maximal possible positive lower bound of the Ricci curvature of K\"{a}hler metrics in $2\pi c_1(S)$. In particular he proved that

\begin{thm}[\cite{Tia92}]\label{thm-Tian}
\begin{enumerate}
\item
If a Fano manifold $S$ admits a K\"{a}hler-Einstein metric, then there is a natural Hermitian-Einstein metric $h_E$ on the extension bundle $E$ of $TS$ by the trivial line bundle with the extension class $c_1(S)$. In particular, $E$ is slope polystable.  
\item
If for any $t\in (0,1]$, there exists a K\"{a}hler metric in $2\pi c_1(S)$ with $Ric(\omega)\ge t\omega$. Then $E$ is slope semistable.
\end{enumerate}
\end{thm}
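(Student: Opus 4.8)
\emph{Proof proposal.} The plan is to construct the Hermitian--Einstein metric of part (1) explicitly from the K\"ahler--Einstein metric, and then to obtain part (2) by an approximation argument built on the same construction. For part (1), I would first observe that $E$ is the Atiyah extension of the anticanonical bundle: the class of
\[
0\to\cO_S\to E\to T_S\to 0
\]
in $\mathrm{Ext}^1(T_S,\cO_S)=H^1(S,\Omega^1_S)$ is $c_1(-K_S)=c_1(S)$. Fix a $C^\infty$-splitting $E\cong\cO_S\oplus T_S$, so that $\bar\partial_E=\bar\partial_0+A$ with $A=\left(\begin{smallmatrix}0&\beta\\0&0\end{smallmatrix}\right)$ and $\beta\in A^{0,1}(S,\Omega^1_S)$ a $\bar\partial$-closed representative of $s\,c_1(S)$ for a scale $s\in\bR_{>0}$ to be fixed. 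Given the K\"ahler--Einstein metric $\omega$ normalized by $\mathrm{Ric}(\omega)=\omega$, I would take $\beta=s\,\omega$ under $A^{1,1}\cong A^{0,1}(\Omega^1_S)$, equip $T_S$ with $\omega$ and $\cO_S$ with the flat metric, and let $h_E$ be the orthogonal direct sum.

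The technical core is the mean-curvature computation for $h_E$. Writing the Chern connection as $D_0+A-A^{*}$, its curvature is $F_{h_E}=F_0+(D_0A-D_0A^{*})-A\wedge A^{*}-A^{*}\wedge A$. Two facts make this tractable. First, on any K\"ahler manifold the K\"ahler form is parallel, so $\nabla\beta=0$; this kills the off-diagonal term $D_0A-D_0A^{*}$ and leaves $F_{h_E}$ block diagonal. Second, a pointwise computation in normal coordinates, using only $\beta=s\omega$, gives $\sqrt{-1}\Lambda_\omega(\beta\wedge\beta^{*})=s^2 n$ on the $\cO_S$-block and $\sqrt{-1}\Lambda_\omega(\beta^{*}\wedge\beta)=s^2\,\mathrm{Id}_{T_S}$ on the $T_S$-block, both constant; their traces cancel, as they must since $\tr F_{h_E}=F_{\det E}$ is topological. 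Combined with $\sqrt{-1}\Lambda_\omega F_{T_S}=\mathrm{Id}_{T_S}$ (the Ricci endomorphism, equal to $\mathrm{Id}$ by $\mathrm{Ric}(\omega)=\omega$), one gets
\[
\sqrt{-1}\Lambda_\omega F_{h_E}=\mathrm{diag}\bigl(s^2 n,\ (1-s^2)\,\mathrm{Id}_{T_S}\bigr).
\]
The Einstein condition is then the single scalar equation $s^2 n=1-s^2$, solved by $s^2=1/(n+1)$, whose common value $n/(n+1)$ is forced to be the normalized slope $\mu(E)$ by the trace identity. Hence $h_E$ is Hermitian--Einstein, and by the Kobayashi--L\"ubke direction of the Hitchin--Kobayashi correspondence $E$ is slope polystable with respect to $[\omega]=2\pi c_1(-K_S)$.

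For part (2) I would run the same construction with each $\omega_t\in 2\pi c_1(S)$ satisfying $\mathrm{Ric}(\omega_t)\ge t\omega_t$. Since $\omega_t$ is still K\"ahler, $\beta_t=s\omega_t$ is still parallel and the off-diagonal curvature still vanishes; the two correction terms remain the same constants by the pointwise computation, and $s^2=1/(n+1)$ can be kept fixed. The only change is that the $T_S$-block now equals the Ricci endomorphism of $\omega_t$ corrected by $-s^2\,\mathrm{Id}$, which is no longer a multiple of the identity. The decisive input is that $\mathrm{Ric}(\omega_t)\ge t\omega_t$ together with $[\mathrm{Ric}(\omega_t)]=[\omega_t]$ gives $\int_S(\mathrm{Ric}(\omega_t)-t\omega_t)\wedge\omega_t^{n-1}=(1-t)\int_S\omega_t^n\to0$ with nonnegative integrand, hence $\mathrm{Ric}(\omega_t)\to\omega_t$, and therefore $\sqrt{-1}\Lambda_{\omega_t}F_{h_t}\to\tfrac{n}{n+1}\,\mathrm{Id}_E$, in $L^1(\omega_t^n)$. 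For a saturated subsheaf $\cF\subset E$, which is a subbundle off a set of codimension $\ge2$, the Chern--Weil/Gauss inequality gives $\deg\cF\le\int_S\tr\bigl(\pi_\cF\cdot\sqrt{-1}\Lambda_{\omega_t}F_{h_t}\bigr)\,\tfrac{\omega_t^n}{n!}$; bounding the integrand by $\tfrac{n}{n+1}\mathrm{rk}\,\cF+\|\sqrt{-1}\Lambda_{\omega_t}F_{h_t}-\tfrac{n}{n+1}\mathrm{Id}\|_{\tr}$, letting $t\to1$, and using that $\deg\cF$ is independent of $t$ yields $\mu(\cF)\le\mu(E)$, i.e.\ semistability.

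The step I expect to be the main obstacle is the analytic bookkeeping in part (2): justifying the orthogonal projection $\pi_\cF$ and the Gauss--Codazzi degree formula for a sheaf that is only a subbundle away from its singular locus (so that the discarded second-fundamental-form term is genuinely $\le0$ and the singular set is negligible for the integral), and converting the $L^1$-smallness of $\mathrm{Ric}(\omega_t)-\omega_t$ into the stated bound uniformly over all $\cF$. The correction constants are harmless, being bounded by $s^2$ times a universal constant independently of $t$; the real content is the $L^1$ estimate, which is exactly where the hypothesis $\mathrm{Ric}(\omega_t)\ge t\omega_t$ enters. In part (1) the only comparably delicate point is tracking signs and constants in the curvature identity so that the solvable equation $s^2n=1-s^2$ reproduces the topological slope; I expect this to be routine once normal coordinates are in place.
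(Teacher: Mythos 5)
Your proposal is correct, and its two halves sit differently relative to the paper. Part (1) reproduces, in different notation, exactly the curvature computation the paper performs in Steps 3--4 of the proof of Theorem \ref{thm-poly} (specialized to $\Delta=\emptyset$): the off-diagonal curvature vanishes because the K\"ahler form is parallel (this is the paper's verification that $\cB=0$), and your diagonal blocks $s^2n$ and $(1-s^2)\,\mathrm{Id}_{T_S}$ are precisely the paper's $\tr_\omega\cC=n\,\mathfrak{b}|\mathfrak{a}|^2$ and $\tr_\omega\cA=(1-\mathfrak{b}|\mathfrak{a}|^2)\,\mathrm{Id}$ from \eqref{eq-trA1}--\eqref{eq-trC1}, with your $s^2$ playing the role of $\mathfrak{b}|\mathfrak{a}|^2=1/(n+1)$ (you rescale the extension class, the paper rescales the metric on $\cO_S$; these are equivalent). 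Part (2) is where you genuinely diverge: you follow Tian's original route, running the same construction along the continuity path $\mathrm{Ric}(\omega_t)\ge t\omega_t$ and extracting semistability from the Gauss--Codazzi degree formula $\deg\scr{F}\le\int\tr(\pi_{\scr{F}}\,\sqrt{-1}\Lambda_{\omega_t}F_{h_t})\,\omega_t^n/n!$ for saturated subsheaves, where the key point---which you correctly isolate---is that the \emph{positivity} of $\mathrm{Ric}(\omega_t)-t\omega_t$ upgrades the vanishing of its integral to trace-norm $L^1$-smallness of the Ricci endomorphism, uniformly over all subsheaves. The paper instead (i) replaces the twisted path by genuine conical K\"ahler--Einstein metrics on the perturbed pairs $(S,\Delta+\frac{1-t}{m}H)$, and (ii) proves the degree inequality not via $\pi_{\scr{F}}$ but via the bounded section $u$ of $\wedge^r\scr{E}\otimes\scr{L}^{-1}$ and the $\log(|u|^2+\tau^2)$ inequality with cut-off functions $\chi_\epsilon$, following Campana--P\v{a}un and Guenancia--Taji. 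The paper's devices are forced on it by the conical singularities along $\Delta$, where the orthogonal projection and the discarded second-fundamental-form term are delicate near the boundary divisor; in the smooth Fano setting of the quoted theorem your more classical argument is adequate, and the paper's remark following Theorem \ref{thm-semi} explicitly acknowledges the twisted-metric alternative as a viable variant.
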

Tian's construction will be reviewed in section \ref{sec-logTian}. In this note, we will first generalize Theorem \ref{thm-Tian} to the logarithmic setting. To state the result, we first recall the following standard definition.
\begin{defn} 
Let $S$ be a
normal projective variety and $\Delta=\sum_i \delta_i \Delta_i$ be a $\bQ$-divisor with $\delta_i\in (0,1]$. We assume $(S, \Delta)$ has log canonical singularities. 
\begin{enumerate}
\item
$(S, \Delta)$ is a log-Fano pair if $-(K_S+\Delta)$ is an ample $\bQ$-Cartier divisor and $(S, \Delta)$ has klt singularities.
\item
$(S, \Delta)$ is a log-Calabi-Yau pair (log-CY) if $(K_S+\Delta)\equiv 0$.
\end{enumerate}
\end{defn}

We can state our first result and refer to \cite{Tia97, Don02, Li12} for the definition of  K-(semi)stability of log-Fano pairs.
\begin{thm}\label{thm-main}
Let $(S, \Delta)$ be a log-Fano pair that is log smooth (i.e. $S$ is smooth and $\Delta$ is simple normal crossing).
If $(S, \Delta)$ is K-semistable, then the orbifold tangent sheaf $T_S(-\log\Delta)$ is slope semistable with respect to $-(K_S+\Delta)$. 

Moreover, for any $\lambda\in \bQ_{>0}$, if we let $\scr{E}$ be the extension sheaf of $T_S(-\log \Delta)$ by $\cO_S$ with the extension class $\lambda \cdot c_1(-(K_S+\Delta))$, then $\scr{E}$ is slope semistable with respect to $-(K_S+\Delta)$.
\end{thm}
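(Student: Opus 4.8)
The plan is to follow Tian's analytic strategy, adapting his construction of an approximately Hermitian--Einstein metric to the conical setting, and to supply the passage from the algebraic hypothesis to the required analytic input through the greatest Ricci lower bound. The first step is to record that for a log smooth log-Fano pair K-semistability is equivalent to $\delta(S,\Delta)\ge 1$, and that the greatest Ricci lower bound satisfies $R(S,\Delta)=\min\{1,\delta(S,\Delta)\}$, so that K-semistability forces $R(S,\Delta)=1$. Concretely this means: for every rational $t\in(0,1)$ there is a conical K\"ahler metric $\omega_t\in 2\pi c_1(-(K_S+\Delta))$, with cone angles $2\pi(1-\delta_i)\le 2\pi$ along the components of $\Delta$, satisfying $\mathrm{Ric}(\omega_t)\ge t\,\omega_t$ in the orbifold sense. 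This family replaces, in the logarithmic category, the hypothesis ``$\mathrm{Ric}(\omega)\ge t\omega$'' of Theorem \ref{thm-Tian}(2).

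The second step is to run the construction of Section \ref{sec-logTian} with $\omega_t$ in place of a genuine K\"ahler--Einstein metric. The metric $\omega_t$ induces an orbifold Hermitian metric on $T_S(-\log\Delta)$, and, since the extension class is $\lambda\,c_1(-(K_S+\Delta))=\lambda\,[\omega_t]/2\pi$, the Ricci form of $\omega_t$ furnishes an explicit representative of the second fundamental form realizing $\scr{E}$ as a $C^\infty$ extension of $T_S(-\log\Delta)$ by $\cO_S$ (the parameter $\lambda$ only rescales the metric on the $\cO_S$ factor and is harmless). The central computation is that the mean curvature of the resulting metric on $\scr{E}$ is a constant multiple of the identity up to an error controlled pointwise by $\mathrm{Ric}(\omega_t)-\omega_t$; at $t=1$ this is exactly the Hermitian--Einstein equation of Theorem \ref{thm-Tian}(1), and for $t<1$ the deviation is $O(1-t)$ after pairing with $[\omega_t]^{n-1}$. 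It is precisely the non-triviality of the extension class, encoded by the Ricci form, that makes the non-split sheaf $\scr{E}$ approximately Hermitian--Einstein at its own slope $\tfrac{1}{n+1}(-(K_S+\Delta))^n$, whereas the split sheaf $\cO_S\oplus T_S(-\log\Delta)$ would be destabilized by the summand $T_S(-\log\Delta)$. The same computation, applied to $T_S(-\log\Delta)$ with its $\omega_t$-metric, shows that sheaf is itself approximately Hermitian--Einstein with error of the same order.

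The third step is the Chern--Weil slope estimate. For any saturated subsheaf $\scr{G}$ of $\scr{E}$ (respectively of $T_S(-\log\Delta)$) I would write $\deg_{-(K_S+\Delta)}(\scr{G})$ as the integral over $S$ of the curvature induced on $\det\scr{G}$, and invoke the standard inequality bounding this by the trace of the mean curvature restricted to $\scr{G}$ minus the $L^2$-norm of the second fundamental form. Using that the mean curvature is central up to an $O(1-t)$ error uniform in $\scr{G}$, this yields $\mu(\scr{G})\le\mu(\scr{E})+C(1-t)$ (respectively with $\mu(T_S(-\log\Delta))$ on the right), with $C$ independent of $\scr{G}$. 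Letting $t\to 1$ gives $\mu(\scr{G})\le\mu(\scr{E})$, which is slope semistability of $\scr{E}$, and the parallel estimate gives slope semistability of $T_S(-\log\Delta)$; note that one cannot simply deduce the latter from the former, since $T_S(-\log\Delta)$ is a quotient of $\scr{E}$ of strictly larger slope, so both must be obtained from the construction directly.

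The hard part will be the conical analysis along $\Delta$. One must verify that $\omega_t$ induces the correct orbifold Hermitian metrics on the log tangent sheaf and on $\scr{E}$, that the Chern--Weil integrals converge and genuinely compute the algebraic slopes with respect to $-(K_S+\Delta)$ rather than some truncation near $\Delta$, and that the integration by parts underlying the slope inequality remains valid across the conical locus. This is exactly where the hypothesis $\delta_i\in(0,1]$ enters: it forces the cone angles $2\pi(1-\delta_i)$ to be at most $2\pi$, so that the conical metrics exist with the regularity needed and the logarithmic poles of $T_S(-\log\Delta)$ are matched to the conical singularities of $\omega_t$. Making the error term uniform in the destabilizing subsheaf near $\Delta$, and ensuring all estimates survive the limit $t\to1$, is the principal technical obstacle.
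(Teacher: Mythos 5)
Your strategy is viable --- it is essentially the alternative route that the paper itself acknowledges in the remark following Theorem \ref{thm-semi} --- but it is not the route the paper actually takes, so a comparison is in order. The paper first proves the K-polystable case (Theorem \ref{thm-poly}), where a genuine conical K\"ahler--Einstein metric exists by the log-smooth Yau--Tian--Donaldson theorem of \cite{LTW17, TW18}; there the extension class is represented by $\psi=\fa\,\omega$, a multiple of the \emph{K\"ahler form itself}, which makes the off-diagonal curvature block $\cB$ vanish identically (via $\partial_i g_{p\bar j}=\partial_p g_{i\bar j}$) and yields the exact Hermitian--Einstein identity $\tr_\omega R^{\scr{E}}=\frac{n}{n+1}\,{\rm id}_{\scr{E}}$, so the Chern--Weil argument with the cutoff functions of \cite{CGP13, CP16} is run only once, with no error term. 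The K-semistable case (Theorem \ref{thm-semi}) is then handled \emph{algebraically}: one perturbs the boundary to $\Delta_t=\Delta+\frac{1-t}{m}H$, which is uniformly K-stable, applies the polystable case to $(S,\Delta_t)$, and transfers the slope bound through the inclusion of orbifold cotangent sheaves $(\sigma_S^t)^*\Omega^1_S(\log\Delta)\hookrightarrow(\sigma_S^t)^*\Omega^1_S(\log\Delta_t)$ before letting $t\to1$. Your route instead builds approximate Hermitian--Einstein metrics directly from K-semistability, as in Tian's original proof of Theorem \ref{thm-Tian}(2); it avoids the auxiliary divisor $H$ but costs more analysis, and two of your steps need repair. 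First, the conical metrics with $Ric(\omega_t)\ge t\omega_t$ must be sourced carefully: the paper obtains them in twisted form $Ric(\omega_t)=t\omega_t+(1-t)\hat\omega_0$ from the properness of the twisted log-Mabuchi energy of \cite{LTW17}, whereas your appeal to $R(S,\Delta)=\min\{1,\delta(S,\Delta)\}$ in the conical log-smooth category is itself a nontrivial statement requiring justification at the level of regularity needed (quasi-isometry to the model cone metric). Second, the Dolbeault representative of the extension class should be $\lambda\omega_t$, not the Ricci form of $\omega_t$: the two are cohomologous, but choosing the Ricci form destroys the vanishing of $\cB$ and injects derivatives of the Ricci curvature into the error. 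Relatedly, the deviation $Ric(\omega_t)-\omega_t$ is \emph{not} pointwise $O(1-t)$; one must write it as $(Ric(\omega_t)-t\omega_t)-(1-t)\omega_t$, use nonnegativity of the first term so that $\lambda_{\max}\le\tr$, and use the cohomological identity $\int_S\tr_{\omega_t}(Ric(\omega_t)-t\omega_t)\,\omega_t^n=n(1-t)[\omega_t]^n$ to obtain the $L^1$ bound that is uniform in the subsheaf $\scr{G}$. With those repairs your argument closes; your observations that $T_S(-\log\Delta)$ cannot be deduced from semistability of $\scr{E}$ (being a quotient of larger slope) and that the conical Chern--Weil analysis (boundedness of $|u|^2$, the cutoff trick) is the main technical burden are both correct, and the needed details are exactly Steps 1--2 and 4 of the paper's proof of Theorem \ref{thm-poly}. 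One last small point: a log smooth log-Fano pair is klt, so $\delta_i\in(0,1)$ and only cone angles in $(0,2\pi)$ occur; no cusps arise in this theorem.
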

The first statement could be seen as the log-Fano correspondent of the semistability results in \cite{Gue16}, \cite[Theorem C]{GT16}. The techniques used in its proof is partly inspired by \cite{CP16, GT16}.

We can prove a stronger result by weakening the log-smooth assumption (see section \ref{sec-singularstable}). We expect that a similar statement is true by just assuming that the pair $(S, \Delta)$ is K-semistable. 
Our proof uses continuity method as in \cite{Don12, LTW17, TW18} by introducing an auxiliary very ample divisor and consider the K\"{a}hler-Einstein metric on $(S, \Delta+\frac{1-t}{m}H)$ for $t$ arbitrarily close to $1$. There seems to be some technical difficulty in producing such K\"{a}hler-Einstein metrics on a general singular K-semistable Fano pair.

Similar argument can be used to prove a result in the log-Calabi-Yau case:
\begin{thm}\label{thm-mainlogCY}
Let $(S, \Delta)$ be a log-Calabi-Yau pair that is log smooth. Let $L$ be any ample $\bQ$-line bundle on $S$. Then the orbifold tangent sheaf $T_S(-\log(\Delta))$ is slope semistable with respect to $L$.

Moreover, let $\scr{E}=\scr{E}(L)$ be the extension sheaf of $T_S(-\log(\Delta))$ by $\cO_S$ with the extension class $c_1(L)$. Then $\scr{E}$ is slope semistable with respect to $L$.
\end{thm}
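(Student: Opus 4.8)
The plan is to prove the two assertions in turn, deducing the statement for $\scr{E}$ from the one for $T_S(-\log\Delta)$. The feature that makes the Calabi-Yau case cleaner than the Fano case of Theorem \ref{thm-main} is that here every relevant slope vanishes: since $K_S+\Delta\equiv 0$ we have $c_1(T_S(-\log\Delta))=-(K_S+\Delta)\equiv 0$, and therefore also $c_1(\scr{E})=c_1(\cO_S)+c_1(T_S(-\log\Delta))\equiv 0$, so that $\mu_L(\scr{E})=\mu_L(T_S(-\log\Delta))=0$ (in the orbifold/logarithmic sense). Slope semistability thus reduces to the assertion that every coherent subsheaf has nonpositive $L$-degree.

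For $T_S(-\log\Delta)$ I would argue analytically, in parallel with the proof of Theorem \ref{thm-main}. By Yau's theorem in its conical Calabi-Yau form, one seeks a Ricci-flat K\"ahler-Einstein metric $\omega$ in the class $c_1(L)$ with cone angle $2\pi(1-\delta_i)$ along each $\Delta_i$. As in Tian's construction (to be reviewed in section \ref{sec-logTian}), such a metric induces a Hermitian metric on the orbifold tangent sheaf whose Chern curvature satisfies $\sqrt{-1}\,\Lambda_\omega F=\mathrm{Ric}(\omega)=0$; i.e. the induced metric is Hermitian-Einstein with vanishing Einstein factor. The orbifold Hitchin-Kobayashi correspondence then yields that $T_S(-\log\Delta)$ is slope polystable, hence slope semistable, with respect to $L$.

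Granting this, the semistability of $\scr{E}$ is formal and, notably, independent of the extension class. Let $\scr{F}\subseteq\scr{E}$ be a coherent subsheaf, and set $\scr{F}_0=\scr{F}\cap\cO_S$ and $\scr{F}_1=\mathrm{image}\big(\scr{F}\to T_S(-\log\Delta)\big)$, so that $0\to\scr{F}_0\to\scr{F}\to\scr{F}_1\to 0$ and $c_1(\scr{F})=c_1(\scr{F}_0)+c_1(\scr{F}_1)$. Now $\scr{F}_0$ is zero or a rank-one subsheaf of $\cO_S$, so that $c_1(\scr{F}_0)\cd L^{n-1}\le 0$ (it is minus an effective divisor when $\scr{F}_0\ne 0$); and $\scr{F}_1$ is a subsheaf of the semistable sheaf $T_S(-\log\Delta)$ of slope $0$, so $c_1(\scr{F}_1)\cd L^{n-1}\le 0$. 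Adding gives $c_1(\scr{F})\cd L^{n-1}\le 0$, i.e. $\mu_L(\scr{F})\le 0=\mu_L(\scr{E})$, which is exactly slope semistability.

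The entire difficulty therefore sits in the analytic input of the second paragraph: constructing the conical Ricci-flat metric and establishing the Hitchin-Kobayashi correspondence on the log-smooth pair, especially when some $\delta_i$ approaches or equals $1$ and the conical metric degenerates into a cusp. As in the Fano case, I expect to handle this by a continuity method in the spirit of \cite{LTW17, TW18}: produce conical Ricci-flat metrics for coefficients bounded away from $1$ (if necessary after introducing an auxiliary ample divisor $H$ and deforming along $\Delta+\tfrac{1-t}{m}H$), and then pass to the limit. Since slope semistability is defined by non-strict inequalities, it is a closed condition and survives the limit, so that only existence along the approximating family---not uniform estimates on the metrics---is needed to conclude.
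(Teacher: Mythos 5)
Your proposal is correct, and the interesting divergence from the paper is in the second statement. For the first statement you and the paper take essentially the same route: both rely on the conical/cusp Ricci-flat metric on $(S,\Delta)$ in the class $c_1(L)$ (the paper cites \cite[Theorem 6.3]{GP16}, which already produces the metric for all $\delta_i\in(0,1]$, with cusp singularities when $\delta_i=1$, so the continuity-method fallback you sketch is not needed); the one caveat is that you outsource the conclusion to an ``orbifold Hitchin--Kobayashi correspondence,'' whereas the paper does not invoke such a black box but runs the degree inequality directly --- boundedness of the norm of the section $u$ of $\wedge^r(T_{S'}(-\log B))\otimes(\wedge^r\scr{F})^{-1}$ on the adapted cover, cut-off functions $\chi_\epsilon$, and integration by parts --- which is precisely the technical content your second paragraph defers. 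For the second statement your argument is genuinely different and simpler. The paper proves semistability of $\scr{E}$ analytically: it endows $\scr{E}_{S'}$ with the metric $g'\oplus\mathfrak{b}$, computes that the contracted curvature equals $-\mathfrak{b}\cdot\mathrm{id}_{\scr{E}}+\eta$ with an error term $\eta$ bounded by $(n+1)\mathfrak{b}$, derives $\deg(\scr{F})\le \mathfrak{b}\,\frac{\rk(\scr{F})}{n}(\sigma_S^*L)^{n}+\frac{n+1}{n}\mathfrak{b}\,(\sigma_S^*L)^{n}$, and lets $\mathfrak{b}\to 0$. You instead observe that since $K_S+\Delta\equiv 0$ every slope in sight vanishes, so $\scr{E}$ is an extension of two semistable sheaves of equal slope and the standard d\'evissage ($\scr{F}_0=\scr{F}\cap\cO_S$, $\scr{F}_1=\mathrm{im}(\scr{F}\to T_S(-\log\Delta))$, additivity of $c_1$) gives semistability formally and independently of the extension class. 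This is correct --- one only has to perform the d\'evissage equivariantly on the cover $S'$ so that $\scr{F}_0$ and $\scr{F}_1$ are again orbifold subsheaves, which is automatic since the two maps are --- and it buys a cleaner, purely algebraic proof of the second assertion from the first; what it does not give is any leverage in the Fano case of Theorem \ref{thm-main}, where the slopes of $\cO_S$ and $T_S(-\log\Delta)$ differ and the paper's explicit curvature computation on the extension (with the choice $\mathfrak{a}=\lambda\sqrt{-1}$, $\mathfrak{b}=\frac{1}{(n+1)\lambda^2}$) is genuinely needed.
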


Note that the first statement is well known if $\Delta=0$, while the second statement seems to be new even if $\Delta=0$.

We will give two applications of the above results. The first one is to study a question of Borbon-Spotti on the relation between the volume densities of Calabi-Yau metrics on log surfaces and the local Euler numbers of log canonical singularities (see \cite{Meg99,Lan03}). In \cite{Lan03}, Langer introduced local Euler numbers for general log canonical surface singularities and used it to prove a Miyaoka-Yau inequality for any log canonical surface. In an attempt to understand Langer's inequality using the K\"{a}hler-Einstein metric on a log canonical surface, 
Borbon-Spotti conjectured recently in \cite{BS17} that the volume densities of the singular K\"{a}hler-Einstein metrics should match Langer's local Euler numbers (at least for log terminal surface singularities). They verified this in special examples by comparing the known values of both sides. 
In another related direction, in a series of papers (see \cite{Li15, LL16, LX16} and the references therein), a new notion of normalized volumes of klt singularities has been developed. It has been proved that the normalized volume is equal to the volume density up to a factor $(\dim X)^{\dim X}$ for any point $(X,x)$ that lives on a Gromov-Hausdorff limit of smooth K\"{a}hler-Einstein manifolds (\cite{HS16, LX17}). In view of this connection, one can formulate a purely algebraic problem about two algebraic invariants of the singularities.
This problem was already posed by Borbon-Spotti at least in the log terminal case. We formulate the following form by including one of Langer's expectations (see \cite[p.381]{Lan03}):
\begin{conj}[{see \cite[p.37]{BS17}}]\label{conj}
Let $(X, D, x)$ be a germ of log canonical surface singularity. Then we have
\begin{equation}
e_{\rm orb}(x, X, D)=
\left\{
\begin{array}{cl}
\frac{1}{4}\hvol(x, X,D), & \text{ if } (X,D) \text{ is log terminal };\\
0, & \text{ if } (X, D) \text{ is not log terminal}.
\end{array}
\right. 
\end{equation}
\end{conj}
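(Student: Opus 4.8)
The plan is to reduce the general conjecture to the orbifold-cone case — for which the factor-$4$ relation is established in this paper — by degenerating an arbitrary klt germ to its stable degeneration and tracking both invariants, and to handle the strictly log canonical case by a separate vanishing argument. Since $\hvol$ is by construction a valuative infimum whereas $e_{\rm orb}$ is read off from a minimal log resolution, the heart of the matter is to exhibit a single geometric model on which both invariants can be compared. I would first split the proof along the dichotomy of the statement: the klt case, where the positive answer $e_{\rm orb}=\tfrac14\hvol$ is expected, and the non-klt case, where $e_{\rm orb}=0$.

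For the klt case, I would invoke the theory of the normalized-volume minimizer: the germ $(X,D,x)$ carries a unique (up to scaling) quasi-monomial valuation $v_0$ minimizing $A^2\cdot\vol$, with finitely generated associated graded ring, and the induced degeneration produces a K-semistable log Fano cone $(X_0,D_0,x_0)$ with $\hvol(x,X,D)=\hvol(x_0,X_0,D_0)$. In dimension two the central fibre is a cone under a $\bG_m$-action whose base is a one-dimensional log Fano pair, necessarily $(\bP^1,\sum_i a_i p_i)$ of positive degree; thus every klt surface germ degenerates to an orbifold cone over a marked $\bP^1$. (Cyclic quotient singularities $\tfrac1n(1,a)$ already are such cones, so nothing is lost by allowing an irrational Reeb field.) On $(X_0,D_0,x_0)$ the orbifold-cone computation of this paper yields $e_{\rm orb}(x_0,X_0,D_0)=\tfrac14\hvol(x_0,X_0,D_0)$, the factor $4=2^2=(\dim X)^{\dim X}$ being the normalization relating $\hvol$ to the volume density. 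It would then suffice to prove $e_{\rm orb}(x,X,D)=e_{\rm orb}(x_0,X_0,D_0)$.

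This invariance of $e_{\rm orb}$ under the stable degeneration is the main obstacle, for unlike $\hvol$ it is not manifestly valuative. I would first rewrite Langer's local Euler number purely in terms of the dual graph of the minimal log resolution together with the discrepancy coefficients, and then seek a simultaneous minimal log resolution of the family $\{(X_t,D_t)\}$ over the degeneration base whose dual graph and discrepancies are constant in $t$; invariance would follow from the formula. Where a simultaneous resolution is unavailable, I would instead establish lower semicontinuity of the orbifold Euler characteristic along the family and combine it with the equality already forced at $t=0$ by the cone computation, pinning $e_{\rm orb}$ to be locally constant. Producing a resolution-theoretic expression for $e_{\rm orb}$ that is stable under this degeneration is the key technical step.

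Finally, in the strictly log canonical case, if $(X,D)$ is log canonical but not klt at $x$ then there is a divisor over $x$ of log discrepancy $0$, hence a log canonical centre through $x$; the corresponding cone degenerations have base of genus $\ge 1$ (elliptic or higher), consistent with the expected vanishing. I would prove $e_{\rm orb}(x,X,D)=0$ directly from Langer's resolution formula, using the classification of non-klt lc surface germs — simple elliptic, cusp, and their logarithmic analogues — for which the contributions in the formula cancel, recovering exactly the vanishing anticipated by Langer. In summary, the routine inputs are the reduction to cones and the non-klt vanishing, while the single genuinely new ingredient is the degeneration-invariance of Langer's local Euler number in the klt case.
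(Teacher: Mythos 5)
The statement you are addressing is stated in the paper as Conjecture \ref{conj} and is \emph{not} proved there in general: the paper confirms it only for two-dimensional log canonical \emph{cone} singularities (Proposition \ref{prop-conj}, Corollary \ref{cor-conetrue}, Proposition \ref{prop-conjCY}) and explicitly leaves the general case open, to be attacked ``combined with some degeneration/deformation techniques.'' Your proposal is essentially that suggested attack. Its central-fibre ingredient --- semistability of the orbifold (co)tangent extension sheaf for a K-semistable base (Theorem \ref{thm-semi}), Langer's cone formula (Theorem \ref{thm-coneuler}), and the normalized-volume computation for K-semistable cones --- coincides with what the paper actually proves. The reduction of a general klt germ to its cone degeneration is exactly where your argument has a genuine gap, and it is the same gap that keeps the statement a conjecture in the paper.

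Concretely: you identify the invariance of $e_{\orb}$ under the stable degeneration as ``the key technical step'' but do not prove it, and neither fallback you offer closes it. A simultaneous log resolution with constant dual graph and discrepancies does not exist for these degenerations --- already for $(\bC^2,\sum_{i=1}^m\delta_iL_i,0)$ in the unstable regime the $m$ lines collide to two, changing the combinatorics of any log resolution of the pair --- and even with matching combinatorics one must control Langer's local Chern class $c_2(\mu_Y,\cdot)$, which depends on the sheaf and not only on the graph. Semicontinuity of $e_{\orb}$ together with the known value at $t=0$ gives only a one-sided inequality against $\hvol(X_t)/4$; the complementary inequality for the general fibre is available (via Theorem \ref{thm-coneuler}) only when the general fibre is itself a cone. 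Note that the paper never transports $e_{\orb}$ along a degeneration: in the unstable klt case it quotes Langer's independent computation of $e_{\orb}(0;\bC^2,D)$ and transports only $\hvol$ (Theorem \ref{thm-degvol}), then compares the two answers. Likewise, your non-klt case is a plan rather than a proof: the paper establishes the vanishing only for log-CY cones (Proposition \ref{prop-conjCY}), and the vanishing for general strictly log canonical germs is one of Langer's expectations built into the conjecture, not a known fact. In short, what you have written is a reasonable research program aligned with the author's own suggestion, but the decisive step --- a degeneration-stable expression for $e_{\orb}$, or an independent upper bound matching the lower bound --- is missing, so the general statement remains unproven.
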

We refer to Definition \ref{defn-locEuler} and Definition \ref{defn-hvol} for the definition of the two sides. 

In this paper,  we will confirm this conjecture for log canonical cone singularities.
\begin{defn}(see \cite{LX17})\label{defn-logcone}
A {\it good $\bC^*$ action} on a log pair $(X, D)$ is a $\bC^*$-action on $X$ that preserves the divisor $D$ and has a unique attractive fixed point $x$ which is in the closure of any $\bC^*$-orbit on $X$. In this case, $(X^\circ, D^{\circ}):=(X\setminus\{x\}, D\setminus\{x\})$ is a $\bC^*$-Seifert bundle over the quotient orbifold $(X, D)/\bC^*:=(X^\circ, D^\circ)/\bC^*=(S, \Delta)$. Note that $\Delta$ consists of both the quotient of $D$ and the orbifold locus of the quotient map $X^\circ\rightarrow S$. We will also say that $(X,D)$ is a log orbifold cone (or simply a log cone) over $(S, \Delta)$. 

A log cone $(X, D)$ is a log-Fano cone if $(X,D)$ has klt singularities and $(S, \Delta)$ is a log-Fano pair.

A log cone $(X, D)$ is a log-CY cone if $(X,D)$ has log canonical singularities and $(S, \Delta)$ is log-CY.
\end{defn}

\begin{exmp}\label{exmp-C2D}
If $X=\bC^2$ and the good $\bC^*$-action associated to the weight $(a,b)$ ($a,b\in \bN, {\rm gcd}(a,b)=1$), then any log cone singularity covered by Definition \ref{defn-logcone} is of the form pair $(\bC^2, D, 0)$ where $D=c_0 \{z_2=0\}+c_\infty \{z_1=0\}+\sum_i c_i D_i$ where $D_i=\{u_i z_1^b- z_2^a=0\}$. The corresponding quotient is given by:
\begin{equation}
(S, \Delta)=\left(\bP^1, \left(\frac{c_0}{a}+\frac{a-1}{a}\right)\{0\}+\left(\frac{c_\infty}{b}+\frac{b-1}{b}\right) \{\infty\}+\sum_i c_i \{u_i\}\right),
\end{equation} 
and the orbifold line bundle is given by the $\bQ$-Weil divisor $L:=-\lambda^{-1} (K_S+\Delta)$ where $\lambda=a+b-c_0b-c_\infty a-ab \sum_i c_i$. Note that $\deg_{\bP^1}(L)=1/(ab)$. Of course, $(S, \Delta)$ can also be obtained by using weighted blow up. In other words, if $(Y, D_Y, E)\rightarrow (\bC^2, D, 0)$ is the weighted blow up with weight $(a,b)$, where $E\cong \bP^1$ is the exceptional divisor, then we have 
$
(S, \Delta)=(E, {\rm Diff}_{E}(D_Y))
$
(see \cite{Kol13, LX16}).
 
\end{exmp}

Before we state our next results, we explain very roughly why the stability of extension sheaves in Theorem \ref{thm-main}-Theorem \ref{thm-mainlogCY} can be applied to the log canonical cone case of conjecture \ref{conj}. 
In \cite{Lan03}, Langer defined local Euler numbers by using local second Chern classes of sheaves of logarithmic co-tangent sheaves on (coverings of) log resolutions. Based on a previous calculation of Wahl, he showed that such local second Chern classes can be effectively calculated when we have a cone singularity such that the resolution is given by the standard blow up of the vertex of the cone and the sheaf on the blow-up is the pull back of a sheaf on the base. In this case the local second Chern class is related to the semistability of the sheaf on the base (see Theorem \ref{thm-coneuler}). On the other hand, Wahl proved a basic fact in \cite[Proposition 3.3]{Wah76} that in the cone case, the logarithmic cotangent sheaf of the standard blow-up is exactly the pull back of the extension sheaf of the co-tangent sheaf of the base with the extension class given by the corresponding polarization. Our main observation is that these two ingredients can be combined and generalized to the logarithmic case. As a consequence, this allows us to apply Theorem \ref{thm-main}-\ref{thm-mainlogCY} and Langer-Wahl's formula to calculate the local Euler class when we have a K-semistable log Fano cone or a log Calabi-Yau cone. On the other hand, the normalized volumes of semistable log Fano cone singularities have been calculate in full generality in \cite{LL16, LX16} (see Theorem \ref{thm-semivol}-\ref{thm-degvol}). So we can compare and confirm \ref{conj} for these log cone singularities. Next we will describe the results.

Note that the quotient of a 2-dimensional log-Fano cone by its $\bC^*$-action is always a marked Riemann sphere
$(S, \Delta):=(\bP^1, \sum_i \delta_i p_i)$ satisfying (see Lemma \ref{lem-coneklt}): 
\begin{equation}
\delta_i\in (0,1]\cap \bQ \text{ and } \sum_i \delta_i< 2.
\end{equation} 
In this case, the K-semistabililty of $(S, \Delta)$ can be completely characterized by the (closed) Troyanov condition (see \cite[Example 2]{Li12}, \cite{Fuj17}):
\[
\left(\bP^1, \sum_i \delta_ip_i\right) \text{ is K-semistable } \quad \Longleftrightarrow \quad \sum_{j\neq i} \delta_j\ge \delta_i, \forall i.
\]
\begin{prop}\label{prop-conj}
Let $(X, D, x)$ be a log-terminal singularity with a good $\bC^*$-action such that it is an orbifold cone over $(\bP^1, \sum_i \delta_i p_i)$. If $(\bP^1, \sum_i \delta_i p_i)$ is K-semistable, then 
the Conjecture \ref{conj} holds true. 
\end{prop}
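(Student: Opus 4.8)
The plan is to compute the two invariants separately and match them, using the semistability furnished by Theorem \ref{thm-main} as the bridge between the algebraic geometry of the base and the local Euler number on the resolution. Since $(X,D,x)$ is log terminal and an orbifold cone over $(\bP^1,\sum_i\delta_i p_i)$, Lemma \ref{lem-coneklt} tells us that $(\bP^1,\sum_i\delta_i p_i)$ is a log-Fano pair (equivalently $\sum_i\delta_i<2$), so we are squarely in the first, log-terminal, branch of Conjecture \ref{conj}, and the degenerate branch never arises. I would set $L:=-\lambda^{-1}(K_{\bP^1}+\Delta)$ as in Example \ref{exmp-C2D}, with $\deg_{\bP^1}L=1/(ab)$, and realize the cone resolution as the standard blow-up $p\colon Y\to X$ whose exceptional divisor $E\cong\bP^1$ is the zero section of (the total space of) $L^{-1}$, with $E^2=-\deg_{\bP^1}L=-1/(ab)$.

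For the left-hand side, I would unwind Langer's definition (Definition \ref{defn-locEuler}) of $e_{\rm orb}(x,X,D)$ as a local second Chern number of the logarithmic cotangent sheaf on $Y$. By Wahl's computation (\cite[Proposition 3.3]{Wah76}), in the logarithmic form recorded in Theorem \ref{thm-coneuler}, this log cotangent sheaf is the pullback $p^*\scr{E}^\vee$, where $\scr{E}=\scr{E}(c_1(L))$ is the extension of $T_{\bP^1}(-\log\Delta)$ by $\cO_{\bP^1}$ with extension class $c_1(L)$. Hence $e_{\rm orb}(x,X,D)$ is expressed through the slope data of $\scr{E}$ on the orbifold $\bP^1$ together with $E^2$. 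In general this expression carries correction terms coming from a destabilizing sub-line-bundle of $\scr{E}$, and the crucial point is that they vanish once $\scr{E}$ is semistable.

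This is where Theorem \ref{thm-main} enters. Since $c_1(L)=\lambda^{-1}c_1(-(K_{\bP^1}+\Delta))$ with $\lambda^{-1}\in\bQ_{>0}$ and $(\bP^1,\sum_i\delta_i p_i)$ is K-semistable by hypothesis, that theorem applies (with parameter $\lambda^{-1}$) to give the slope semistability of $\scr{E}$ with respect to $-(K_{\bP^1}+\Delta)$; equivalently the Troyanov condition $\sum_{j\neq i}\delta_j\ge\delta_i$ holds for all $i$, and $\scr{E}^\vee$ is semistable as well. Feeding this into the Langer--Wahl formula collapses it to its principal term and yields a closed expression for $e_{\rm orb}(x,X,D)$ purely in terms of $a,b$ and the $\delta_i$. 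On the right-hand side, because $(X,D)$ is a K-semistable log-Fano cone, Theorem \ref{thm-semivol} computes $\hvol(x,X,D)$ via the canonical $\bC^*$-valuation, producing a second closed expression in the same data.

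The final step is the explicit comparison; I expect the two expressions to differ by exactly the factor $4=(\dim X)^{\dim X}$, which is the anticipated shape of the identity (the local Euler number matching the volume density, and the volume density being $\hvol/(\dim X)^{\dim X}$). The main obstacle is not a single estimate but the careful translation of Langer's log-resolution definition into an orbifold Chern-class quantity on $\bP^1$: one must correctly account for the $\bC^*$-Seifert/orbifold structure of $p\colon Y\to X$ and the parabolic weights at the $p_i$, verify that semistability eliminates the correction terms \emph{exactly} rather than merely bounding the discriminant, and only then does the bookkeeping of $a,b,\delta_i$ make the factor $\tfrac14$ drop out. This matching of the semistable Langer--Wahl value against the volume formula of \cite{LX16} is the step where the interplay between Theorem \ref{thm-main} and Theorem \ref{thm-coneuler} does the real work.
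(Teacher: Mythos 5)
Your proposal follows essentially the same route as the paper: identify the log cotangent sheaf on the blow-up with the pullback of the extension $\scr{E}$ via Corollary \ref{cor-logsmooth} (Wahl's computation), use Theorem \ref{thm-main} (in the form of Theorem \ref{thm-semi}, with parameter $\lambda^{-1}$) to make $\scr{E}$ semistable so that Theorem \ref{thm-coneuler} gives $c_2=-c_1(\scr{E})^2/(4d)$, and compare with $\hvol=\lambda^2\deg L$ from Theorem \ref{thm-semivol}. The one point you flag as the ``main obstacle'' --- that $L$ is only an orbifold $\bQ$-line bundle, so the resolution is not literally the total space of $L^{-1}$ --- is dispatched in the paper not by tracking parabolic weights but by a short reduction: pass to the ordinary cone $(Z,D_Z,z)$ polarized by a genuine multiple $kL$, and use the multiplicativity of both $e_{\orb}$ (Lemma \ref{lem-Eulerdeg}) and $\hvol$ under the resulting degree-$k$ crepant finite map to reduce to the case where $L$ is Cartier; with that in place your outline is the paper's proof.
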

Combined with Langer's calculation in (\cite[Lemma 8.8]{Lan03}), we then see that Conjecture \ref{conj} is indeed true for all $2$-dimensional log-Fano cones without assuming that $(\bP^1, \sum_i\delta_i p_i)$ is K-semistable: 
\begin{cor}\label{cor-conetrue}
Let $(X, D, x)$ be a $2$-dimensional log-Fano cone singularity. Then the Conjecture \ref{conj} is true. 
\end{cor}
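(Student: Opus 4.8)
Since a log-Fano cone singularity is klt by Definition \ref{defn-logcone}, the pair $(X,D)$ is automatically log terminal, so for such singularities Conjecture \ref{conj} reduces to the single identity $e_{\rm orb}(x,X,D)=\frac14\hvol(x,X,D)$. By Lemma \ref{lem-coneklt} the quotient of $(X,D)$ by its $\bC^*$-action is a marked Riemann sphere $(\bP^1,\sum_i\delta_i p_i)$ with $\delta_i\in(0,1]\cap\bQ$ and $\sum_i\delta_i<2$, and the K-semistability of this pair is equivalent to the closed Troyanov condition $\sum_{j\neq i}\delta_j\ge\delta_i$ for all $i$. The plan is to split the argument according to whether this condition holds.

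If $(\bP^1,\sum_i\delta_i p_i)$ is K-semistable, then the minimizer of the normalized volume is realized by the canonical cone valuation, and Proposition \ref{prop-conj} gives the desired identity directly. It therefore remains to treat the K-unstable case, in which the Troyanov condition fails: there is a distinguished index $i_0$ with $\delta_{i_0}>\sum_{j\neq i_0}\delta_j$. Note that, since we are always dealing with a genuine log-Fano cone, the pair $(X,D)$ is klt regardless of whether the base is K-semistable, so there is no conflict with the hypotheses.

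In the unstable case I would compute the two sides separately and compare. For the normalized volume, the point is that the canonical $\bC^*$-action no longer realizes the minimizer of the normalized-volume function; following the theory of \cite{LL16, LX16} (Theorems \ref{thm-semivol}--\ref{thm-degvol}), the minimizer is a quasi-monomial valuation adapted to the dominant point $p_{i_0}$, which induces a degeneration of $(X,D)$ to a K-semistable (in fact toric) log-Fano cone of the same normalized volume. Computing $\hvol$ on this degeneration, or equivalently optimizing the normalized volume along the associated ray of monomial valuations, yields a closed expression in terms of $\delta_{i_0}$ and $\sum_{j\neq i_0}\delta_j$. For the local Euler number I would invoke Langer's explicit evaluation \cite[Lemma 8.8]{Lan03}, which computes $e_{\rm orb}(x,X,D)$ for precisely these cone singularities through the standard resolution of the vertex. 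Matching the two closed formulas then confirms the factor $\frac14$ and completes the corollary.

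The main obstacle is the unstable case: correctly identifying the minimizing valuation and computing $\hvol$ through the induced degeneration, and then reconciling the resulting expression with the normalization conventions in Langer's combinatorial formula for $e_{\rm orb}$. The K-semistable case is immediate from Proposition \ref{prop-conj}, so all of the genuine work lies in verifying that these two explicit computations agree off the Troyanov locus.
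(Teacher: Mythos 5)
Your overall strategy coincides with the paper's: dispose of the K-semistable base via Proposition \ref{prop-conj}, and in the unstable case compute the two sides separately --- $\hvol$ through a degeneration to a K-semistable cone (Theorem \ref{thm-degvol}) and $e_{\orb}$ through Langer's explicit evaluation --- and match them. The computation you defer does go through exactly as you predict: the paper identifies the degeneration concretely as $(\bC^2,\ \delta_m\{0\}+\delta'\{\infty\})$ (with $\delta'=\delta-\delta_m$) under the $\bC^*$-action of weight proportional to $(1-\delta',1-\delta_m)$, obtains $\hvol/4=(1-\delta')(1-\delta_m)$ from Theorems \ref{thm-semivol}--\ref{thm-degvol}, and checks that this equals Langer's value $(1-\delta+\delta_m)(1-\delta_m)$ from \cite[Theorem 8.7]{Lan03}.

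The one step your plan omits, and which would fail as written, is the reduction from a general $2$-dimensional log-Fano cone to the line-arrangement case. Langer's explicit evaluations are stated for pairs $(\bC^2,\sum_i\delta_iL_i,0)$, whereas a general log-Fano cone over $(\bP^1,\sum_i\delta_ip_i)$ carries an orbifold polarization $L$ with, e.g., $\deg_{\bP^1}L=1/(ab)$, and its total space is typically not $\bC^2$ (see Example \ref{exmp-C2D}); you cannot quote Langer's formula for such a singularity directly. The paper bridges this with two finite crepant morphisms --- $(X,D)\rightarrow(Z,D_Z)$ of degree $k$ after passing to a genuine line bundle $kL=d\,\cO_{\bP^1}(1)$, and $(\bC^2,\sum_i\delta_iL_i)\rightarrow(Z,D_Z)$ of degree $d$ --- and uses the degree-multiplicativity of $e_{\orb}$ (Lemma \ref{lem-Eulerdeg}) together with the corresponding multiplicativity of $\hvol$ (from Theorems \ref{thm-semivol}--\ref{thm-degvol}, see also \cite[Theorem 1.7]{LX17}) to conclude $e_{\orb}(x,X,D)=\tfrac{k}{d}\,e_{\orb}(0,\bC^2,\sum_i\delta_iL_i)$ and the identical relation for $\hvol/4$. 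Without this covering argument (or an independent verification that Langer's formula extends verbatim to orbifold cones with arbitrary Seifert weights), the unstable case of your argument does not close.
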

By similar argument we can apply Theorem \ref{thm-mainlogCY} to confirm Conjecture \ref{conj} for log-CY cone singularities:
\begin{prop}\label{prop-conjCY}
Assume $(X, D, x)$ is a $2$-dimensional log-CY cone. Then the Conjecture \ref{conj} holds true, i.e. $e_{\orb}(X, D, x)=0$.
\end{prop}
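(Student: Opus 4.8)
The plan is to run the same argument used for the log-Fano case in Proposition \ref{prop-conj}, replacing the input Theorem \ref{thm-main} by its log-Calabi-Yau analogue Theorem \ref{thm-mainlogCY} and exploiting the numerical triviality $K_S+\Delta\equiv 0$. First I would record the shape of the base: by Lemma \ref{lem-coneklt} (and Example \ref{exmp-C2D}) the quotient of the two-dimensional log-CY cone $(X,D,x)$ by its good $\bC^*$-action is a marked sphere $(S,\Delta)=(\bP^1,\sum_i\delta_i p_i)$, and the log-CY condition $K_S+\Delta\equiv 0$ reads $\sum_i\delta_i=2$. Consequently the orbifold tangent line bundle has $c_1\big(T_S(-\log\Delta)\big)=-(K_S+\Delta)=0$, so the extension sheaf $\scr{E}=\scr{E}(L)$ fitting in $0\to\cO_S\to\scr{E}(L)\to T_S(-\log\Delta)\to 0$ has vanishing $L$-slope $\mu_L(\scr{E}(L))=0$.

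Next I would set up the local computation via Wahl. On the standard blow-up $\pi\colon Y\to X$ of the vertex, whose exceptional divisor is $E\cong S$, Wahl's identification (recalled in the introduction before Theorem \ref{thm-coneuler}) realizes the orbifold logarithmic (co)tangent sheaf of $Y$ as the pullback $\pi^*\scr{E}(L)$, respectively its dual. This is exactly the geometric input under which Langer's cone formula is available. The crucial analytic/algebraic ingredient is then Theorem \ref{thm-mainlogCY}: for the ample $\bQ$-line bundle $L$ that polarizes the cone, the extension sheaf $\scr{E}(L)$ is slope semistable with respect to $L$. Semistability is precisely the hypothesis that lets me invoke Theorem \ref{thm-coneuler} with no destabilizing-subsheaf contribution, so that $e_{\orb}(x,X,D)$ equals its closed form, proportional to the $L$-discriminant of $\scr{E}(L)$ on the base corrected by the orbifold Euler characteristic of $(S,\Delta)$.

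Finally I would evaluate that closed form in the present degenerate regime. In the log-Fano case of Proposition \ref{prop-conj} the same formula produces $e_{\orb}=\tfrac14\hvol$, with both invariants proportional to a positive power of $-(K_S+\Delta)$. Here $-(K_S+\Delta)\equiv 0$, so every term carrying a factor of $c_1\big(T_S(-\log\Delta)\big)$, equivalently of $\deg_L\big(-(K_S+\Delta)\big)$ or of the orbifold Euler characteristic $2-\sum_i\delta_i$, drops out; together with $\mu_L(\scr{E}(L))=0$ this forces $e_{\orb}(x,X,D)=0$. Since a two-dimensional log-CY cone is strictly log canonical (the vertex is a log canonical but non-klt center), this is exactly the value predicted by Conjecture \ref{conj} in the non-log-terminal case.

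The hard part will be organizing the bookkeeping so that Theorem \ref{thm-coneuler} is applied with the correct polarization: unlike the Fano case, the relevant ample bundle $L$ is not $-(K_S+\Delta)$, which is now trivial, but the independent polarization coming from the $\bC^*$-action, and I must quote Theorem \ref{thm-mainlogCY} for exactly this $L$. A secondary technical point is that the coefficients $\delta_i$ need not be of the form $1-1/m_i$, so the identification of Langer's local second Chern class with the $L$-discriminant of $\scr{E}(L)$ has to be carried out for a genuine $\bQ$-divisor orbifold structure rather than a global finite quotient; once the semistable closed form is in force, however, the vanishing is immediate from $K_S+\Delta\equiv 0$.
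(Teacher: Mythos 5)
Your proposal is correct and follows essentially the same route as the paper: the paper's proof of Proposition \ref{prop-conjCY} simply reruns the argument of Proposition \ref{prop-conj}, substituting Theorem \ref{thm-mainlogCY} (semistability of $\scr{E}(L)$ with respect to the cone polarization $L$) for Theorem \ref{thm-semi}, and then observes that the closed form from Theorem \ref{thm-coneuler} vanishes because $c_1(\scr{E})=c_1(K_{S'}+B)=\sigma_S^*(K_S+\Delta)=0$. Your description of that closed form as a ``discriminant corrected by the orbifold Euler characteristic'' is slightly off --- Theorem \ref{thm-coneuler} gives $-c_1(\scr{E})^2/(4\deg L)$ in the semistable case --- but this does not affect the conclusion, since the vanishing comes exactly from $c_1(\scr{E})=0$ as you say.
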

We expect the results in \ref{prop-conj}-\ref{prop-conjCY} to be useful to attacking the general case combined with some degeneration/deformation techniques.
To highlight our results, note that Proposition \ref{prop-conj}-\ref{prop-conjCY} in particular answers a question in \cite[Remark on p. 387]{Lan03} and completes the computation of local Euler numbers of line arrangements on $\bC^2$. In other words, we now know that the inequality for the last case considered in \cite[Theorem 8.3]{Lan03} is indeed an identity:
\begin{cor}
Let $L_1, \dots, L_n$ be $m$ distinct lines in $\bC^2$ passing through $0$. Let $D=\sum_{i=1}^m \delta_i L_i$, where $0\le \delta_1\le \delta_2\le \dots \le \delta_m\le 1$, and $\delta=\sum_{i=1}^m \delta_i$. If 
$2\delta_m\le \delta\le 2$, then $e_{\orb}(0, \bC^2, D)=\frac{(2-\delta)^2}{4}$.
\end{cor}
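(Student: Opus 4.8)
The plan is to recognize the line arrangement $(\bC^2, D, 0)$ as exactly the orbifold cone treated in Example \ref{exmp-C2D} for the weight $(a,b)=(1,1)$, to identify the hypothesis with K-semistability of the base, and then to make the two invariants in Conjecture \ref{conj} explicit using Proposition \ref{prop-conj}--\ref{prop-conjCY}. First I would set up the identification: after a linear change of coordinates two of the lines may be taken to be the axes, and Example \ref{exmp-C2D} with $a=b=1$ then exhibits the quotient by the standard $\bC^*$-action as $(S,\Delta)=(\bP^1,\sum_{i=1}^m \delta_i p_i)$, with orbifold polarization $L$ satisfying $-(K_S+\Delta)=(2-\delta)L$ and $\deg_{\bP^1}L = 1/(ab)=1$. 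Hence $\deg(-(K_S+\Delta))=2-\delta$, so the base is log-Fano precisely when $\delta<2$ and log-CY precisely when $\delta=2$. Next I would note that the hypothesis $2\delta_m\le\delta$ is exactly the (closed) Troyanov condition $\sum_{j\ne i}\delta_j\ge\delta_i$ for all $i$: since $\delta_1\le\cdots\le\delta_m$, the condition $\delta-\delta_i\ge\delta_i$ is hardest for $i=m$, i.e.\ the binding inequality is $\delta-\delta_m\ge\delta_m$. So the hypothesis says precisely that $(\bP^1,\sum_i\delta_i p_i)$ is K-semistable.

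In the range $\delta<2$ I would argue as follows. The inequality $2\delta_m\le\delta<2$ forces $\delta_m<1$, so all coefficients are $<1$ and the pair $(\bC^2,D)$ is klt; together with $\delta<2$ this makes $(\bC^2,D,0)$ a K-semistable log-Fano cone over $(\bP^1,\sum_i\delta_i p_i)$. Proposition \ref{prop-conj} then gives $e_{\orb}(0,\bC^2,D)=\tfrac14\,\hvol(0,\bC^2,D)$. To evaluate the right-hand side I would compute the normalized volume through the canonical cone valuation $\ord_E$, where $E$ is the exceptional divisor of the blow-up of the origin; by K-semistability (Theorem \ref{thm-semivol}) this valuation minimizes $\hvol$. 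A direct discrepancy computation on the smooth blow-up, using $\mult_0 D=\delta$, yields $K_Y+\tilde D=\pi^*(K_{\bC^2}+D)+(1-\delta)E$, whence $A_{(\bC^2,D)}(\ord_E)=2-\delta$; and since $\ord_E$ is the order of vanishing at the origin, $\vol(\ord_E)=\deg_{\bP^1}L=1$. Therefore $\hvol(0,\bC^2,D)=A^2\cdot\vol=(2-\delta)^2$, and $e_{\orb}(0,\bC^2,D)=(2-\delta)^2/4$.

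The remaining case $\delta=2$ is the log-CY cone, handled directly by Proposition \ref{prop-conjCY}, which gives $e_{\orb}(0,\bC^2,D)=0=(2-\delta)^2/4$; this also shows that the two regimes assemble into the single stated formula. The step I expect to be most delicate is the bookkeeping around the klt/log canonical threshold: the formula in Conjecture \ref{conj} branches on whether the germ is log terminal, and one must verify that within $2\delta_m\le\delta\le2$ the only way to leave the klt locus (some $\delta_i=1$) is to have $\delta=2$, where the log-CY value $0$ already coincides with $(2-\delta)^2/4$. Once this matching of branches is checked, the two cases glue without any gap, and the corollary follows by combining the above with Langer's inequality in \cite[Theorem 8.3]{Lan03}, which is thereby upgraded to an identity.
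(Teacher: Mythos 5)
Your argument is correct and follows essentially the same route as the paper: identify $(\bC^2,D,0)$ as the orbifold cone over $(\bP^1,\sum_i\delta_i p_i)$ with $\deg L=1$ and $\lambda=2-\delta$, recognize $2\delta_m\le\delta$ as the (closed) Troyanov/K-semistability condition, and then apply Proposition \ref{prop-conj} in the klt range $\delta<2$ (where $\hvol=(2-\delta)^2$) and Proposition \ref{prop-conjCY} at $\delta=2$. The only cosmetic difference is that you invoke Proposition \ref{prop-conj} on the whole closed semistable region, while the paper's case analysis treats the boundary $2\delta_m=\delta$ inside its case $\delta_m\ge\delta-\delta_m$ via Langer's explicit formula $(1-\delta+\delta_m)(1-\delta_m)$ — which equals $(2-\delta)^2/4$ there, so the two bookkeepings agree.
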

Other immediate consequences of Theorem \ref{thm-main} and Theorem \ref{thm-mainlogCY} are the following Chern number inequalities for K-semistable log-smooth
log-Fano pairs, and Calabi-Yau pairs. These generalize Chern number inequalities of Song-Wang \cite{SW12} and should be thought of as the log-Fano/log-Calabi-Yau version of the Miyaoka-Yau inequality. Indeed, the use of Higgs bundle in a proof of Miyaoka-Yau's inequality (see \cite[pp.149]{Tia94}, \cite{GKPT15, GT16} and the reference therein) for the log general type case is mirrored here by the use of the extension sheaf from Theorem \ref{thm-main} and Theorem \ref{thm-mainlogCY}. 
To state the result, we recall that according to \cite[Lemma 2.4]{Tia94} and \cite[Example 3.6]{GT16}), $c_i(S, \Delta)$, $i=1,2$ for log smooth pair $(S, \Delta)$ 
are given by the following expressions:
\begin{eqnarray}\label{eq-logChern}
c_1(S, \Delta)&=&c_1(S)+\Delta\; ,\nonumber\\
c_2(S, \Delta)&=&c_2(S)+K_S\cdot \Delta+\sum_i \delta_i \Delta_i^2+\sum_{i<j}\delta_i\delta_j \Delta_i\cdot \Delta_j.
\end{eqnarray}
We then have the following results:
\begin{thm}\label{thm-Chineq}
Let $(S, \Delta)$ be a log-smooth log-Fano pair. Assume $(S, \Delta)$ is K-semistable. Then we have the following Chern-number inequality:
\begin{equation}
\left(2(n+1)c_2(S, \Delta)-n \cdot c_1^2(S, \Delta)\right)\cdot (-K_S-\Delta)^{n-2}\ge 0,
\end{equation}
where $c_i(S, \Delta), i=1,2$ are logarithmic Chern classes appearing in \eqref{eq-logChern}. 
\end{thm}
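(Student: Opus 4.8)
The plan is to obtain the inequality as the Bogomolov--Gieseker discriminant inequality for the slope semistable extension sheaf furnished by Theorem \ref{thm-main}. Set $H:=-(K_S+\Delta)$, which is ample by the log-Fano assumption, fix any $\lambda\in\bQ_{>0}$, and let $\scr{E}$ be the extension of $T_S(-\log\Delta)$ by $\cO_S$ with extension class $\lambda\cdot c_1(H)$ as in Theorem \ref{thm-main}; by that theorem $\scr{E}$ is slope semistable with respect to $H$. The reason for passing to $\scr{E}$ rather than applying a Bogomolov inequality to $T_S(-\log\Delta)$ directly is that $\scr{E}$ has rank $n+1$ but carries the same Chern classes, and it is precisely this increase of rank that converts the generic rank-$n$ discriminant inequality into the sharp Miyaoka--Yau coefficient $2(n+1)$ appearing in the statement.

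First I would compute the Chern classes of $\scr{E}$. From the defining short exact sequence
\[
0\longrightarrow \cO_S \longrightarrow \scr{E}\longrightarrow T_S(-\log\Delta)\longrightarrow 0,
\]
the Whitney sum formula gives $c(\scr{E})=c(\cO_S)\,c(T_S(-\log\Delta))=c(T_S(-\log\Delta))$, since $c(\cO_S)=1$; in particular the extension class is irrelevant at the level of Chern classes. Hence $\rk\scr{E}=n+1$ and, recalling that $c_1^2$ and $c_2$ are insensitive to tangent/cotangent duality, the quantities $c_1^2(\scr{E})$ and $c_2(\scr{E})$ equal the logarithmic Chern numbers $c_1^2(S,\Delta)$ and $c_2(S,\Delta)$ of \eqref{eq-logChern}.

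Next I would invoke the Bogomolov--Gieseker inequality: any slope-$H$-semistable (orbifold) bundle of rank $r$ on an $n$-dimensional projective variety satisfies $\bigl(2r\,c_2-(r-1)c_1^2\bigr)\cdot H^{n-2}\ge 0$. Applied to $\scr{E}$ with $r=n+1$ this reads
\[
\bigl(2(n+1)\,c_2(\scr{E})-n\,c_1^2(\scr{E})\bigr)\cdot H^{n-2}\ge 0,
\]
and substituting the identifications from the previous paragraph yields precisely the asserted inequality.

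The main obstacle is justifying the Bogomolov--Gieseker inequality in the present log smooth setting with fractional boundary coefficients, where $\scr{E}$ is only an orbifold bundle and the $c_i$ are orbifold Chern classes. The cleanest route is to pass to the Deligne--Mumford stack (or a suitable finite orbifold cover ramified along $\Delta$) associated to $(S,\Delta)$, on which $T_S(-\log\Delta)$, and hence $\scr{E}$, becomes an honest vector bundle; one must then check that $H$ pulls back to an ample class under which the slope semistability of Theorem \ref{thm-main} is preserved, so that the classical Bogomolov inequality for semistable bundles applies, and that the orbifold intersection numbers reproduce \eqref{eq-logChern}. Alternatively one may appeal directly to an algebraic form of the inequality for semistable reflexive sheaves on klt pairs. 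Either way, once the orbifold formalism is fixed the deduction is immediate, the genuine mathematical content being contained entirely in the semistability assertion of Theorem \ref{thm-main}.
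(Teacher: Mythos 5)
Your proposal is correct and follows essentially the same route as the paper: take the rank-$(n+1)$ extension sheaf $\scr{E}$ from Theorem \ref{thm-main}, note via the Whitney formula that its Chern classes agree with the logarithmic Chern classes of \eqref{eq-logChern}, and apply the Bogomolov--Gieseker discriminant inequality to the semistable sheaf; the paper implements the orbifold bookkeeping you flag at the end by pulling everything back along the adapted cover $\sigma_S\colon S'\to S$, where $\scr{E}_{S'}$ is an honest semistable sheaf, and invoking the functoriality of $c_1^2$ and $c_2$ under adapted morphisms (Proposition \ref{prop-sameChern}) together with the discriminant computation from \cite[pp.29]{GT16}. No gaps.
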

\begin{thm}\label{thm-ChineqCY}
Let $(S, \Delta)$ be a log-smooth log-Calabi-Yau pair. Let $L$ be any nef line bundle over $S$. Then we have the following Chern-number inequality:
\begin{equation}
c_2(S, \Delta)\cdot L^{n-2}\ge 0.
\end{equation}
\end{thm}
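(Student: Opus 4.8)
The plan is to deduce Theorem \ref{thm-ChineqCY} from the slope semistability of the extension sheaf $\scr{E}=\scr{E}(L)$ established in Theorem \ref{thm-mainlogCY}, by translating the semistability inequality applied to suitable subsheaves into the desired second-Chern-class inequality. The guiding principle is the classical one behind the Bogomolov--Miyaoka--Yau inequality: a slope semistable sheaf $\cV$ of rank $r$ with respect to a polarization $L$ satisfies the Bogomolov inequality
\begin{equation}\label{eq-bogomolov}
\left(2r\,c_2(\cV)-(r-1)c_1^2(\cV)\right)\cdot L^{n-2}\ge 0.
\end{equation}
First I would recall that the extension sheaf $\scr{E}$ sits in the short exact sequence
\begin{equation}\label{eq-extseq}
0\to \cO_S\to \scr{E}\to T_S(-\log\Delta)\to 0,
\end{equation}
so $\scr{E}$ has rank $n+1$ with $c_1(\scr{E})=c_1(T_S(-\log\Delta))=c_1(S,\Delta)$ and $c_2(\scr{E})=c_2(T_S(-\log\Delta))=c_2(S,\Delta)$, since the extension by a trivial line bundle does not change the Chern classes beyond what the exact sequence forces. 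Here I use the identification $c_i(T_S(-\log\Delta))=c_i(S,\Delta)$ coming from \eqref{eq-logChern} (up to the standard sign convention for tangent versus cotangent logarithmic Chern classes).

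Next I would feed these Chern classes into \eqref{eq-bogomolov} with $r=n+1$ and $\cV=\scr{E}$, obtaining
\begin{equation}\label{eq-plug}
\left(2(n+1)c_2(S,\Delta)-n\,c_1^2(S,\Delta)\right)\cdot L^{n-2}\ge 0.
\end{equation}
In the Calabi-Yau case the simplifying input is $K_S+\Delta\equiv 0$, hence $c_1(S,\Delta)=c_1(S)+\Delta=-(K_S+\Delta)\equiv 0$ numerically, so the term $c_1^2(S,\Delta)\cdot L^{n-2}$ vanishes and \eqref{eq-plug} collapses to precisely $c_2(S,\Delta)\cdot L^{n-2}\ge 0$, which is the assertion. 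The only care needed is that $L$ is merely assumed nef rather than ample; I would handle this by first proving the strict-ample case using Theorem \ref{thm-mainlogCY} directly, and then passing to the nef boundary by a limiting argument, replacing $L$ with $L+\epsilon A$ for an ample $A$ and letting $\epsilon\to 0^+$, since the intersection number $c_2(S,\Delta)\cdot L^{n-2}$ depends continuously (polynomially) on the numerical class of $L$.

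The main obstacle is establishing the Bogomolov-type inequality \eqref{eq-bogomolov} for slope semistable sheaves in the logarithmic/orbifold setting with an \emph{arbitrary} nef (not necessarily Kähler-Einstein-adapted) polarization $L$, rather than with respect to the anticanonical class. The cleanest route is to invoke the general algebraic Bogomolov inequality for $L$-semistable torsion-free sheaves on a smooth projective variety, which holds for any ample polarization and extends to the semistable-with-respect-to-a-nef-class statement by the restriction-to-surfaces and limiting techniques; I would either cite this (it is the same tool underlying the Song--Wang inequalities referenced as \cite{SW12}) or reduce to it by restricting to a complete intersection surface cut out by general members of $|mL|$, where semistability is preserved by a Mehta--Ramanan--Flenner-type restriction theorem and the inequality becomes the classical surface Bogomolov inequality. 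A secondary subtlety is confirming that $\scr{E}$ is torsion-free (immediate from \eqref{eq-extseq} since it is an extension of a reflexive sheaf by a line bundle) and that its semistability with respect to $L$—as opposed to semistability of $T_S(-\log\Delta)$ alone—is genuinely what Theorem \ref{thm-mainlogCY} delivers, which it does by its second assertion.
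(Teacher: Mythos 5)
Your proposal follows essentially the same route as the paper: apply the Bogomolov--Gieseker inequality (Theorem \ref{thm-Bog}) to the rank-$(n+1)$ extension sheaf $\scr{E}(L)$, whose slope semistability with respect to $L$ is supplied by Theorem \ref{thm-mainlogCY}, compute $\Delta(\scr{E})=2(n+1)c_2(S,\Delta)-n\,c_1^2(S,\Delta)$ from the extension sequence, and observe that $c_1(S,\Delta)\equiv 0$ kills the second term; the paper merely makes explicit the passage to the adapted cover $S'$ (via Proposition \ref{prop-sameChern}) where the orbifold Chern classes and semistability are actually defined, a step you subsume under ``the logarithmic/orbifold setting.'' Your explicit perturbation $L+\epsilon A$ to handle merely nef $L$ is a detail the paper leaves implicit, and is a correct way to close that small gap.
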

We note that the Calabi-Yau case Theorem \ref{thm-ChineqCY} also follows from the work in \cite{GT16}. As in \cite[Theorem B]{GT16}, the log-smooth condition could be weakened under suitable assumptions of the pair (see also remark \ref{rem-ChineqCY}).

We remark that although the statements of the above theorems are purely algebraic, their proofs depend heavily on the use of K\"{a}hler-Einstein metrics on log-Fano or log-Calabi-Yau pairs. It would be interesting to give purely algebraic proofs of the above results.

Now we sketch the organization of this paper. In the next section, we recall a construction of the pull back of orbifold (co-)tangent sheaves after taking log resolutions and ramified coverings.
This is well known and our exposition is inspired by \cite{Lan03, GT16}. With these sheaves, we state Langer's definition of local Euler numbers for log canonical surface singularities. In section \ref{sec-cone} we specialize to the case of log canonical cone singularities. Here we generalize a result of Wahl identifying the sheaf of logarithmic 1-forms on the standard blow-up of cone singularity with the pull back of an extension sheaf on the base. This is a bridge from Theorem \ref{thm-main}-\ref{thm-mainlogCY} to Theorem \ref{prop-conj}-\ref{prop-conjCY} because the objects studied in Theorem \ref{thm-main}-\ref{thm-mainlogCY} are just examples of such extension sheaf. In section \ref{sec-logTian}, we then extend Tian's semistability result and prove Theorem \ref{thm-main} and Theorem \ref{thm-mainlogCY}. Here we use similar argument as \cite{GT16} to deal with the technical difficulty caused by the conical singularities of K\"{a}hler-Einstein metrics on log pairs. In the log-Fano cone case, we will first prove the polystable case in Theorem \ref{thm-poly} and then use a perturbative approach to deal with the K-semistable case in Theorem \ref{thm-semi}. In section \ref{sec-singularstable}, we also prove a generalization of Theorem \ref{thm-main} for a class of singular log-Fano pairs. In section \ref{sec-logCY}, we prove Theorem \ref{thm-mainlogCY}.

In section \ref{sec-volEuler}, we recall the normalized volume of log terminal singularities. Combining the results from previous sections and the properties/calculations of the invariants for log canonical cone singularities, we complete the proof of Proposition \ref{prop-conj}, Corollary \ref{cor-conetrue} and 
Proposition \ref{prop-conjCY}.

In section \ref{sec-Chineq}, we prove Theorem \ref{thm-Chineq} (resp. \ref{thm-ChineqCY}) by combining Theorem \ref{thm-main} (resp. \ref{thm-mainlogCY}) and the Bogomolov-Gieseker inequality for slope semistable vector bundles.

\bigskip
\noindent
{\bf Acknowledgement:} 
The author is partially supported by NSF (Grant No. DMS-1405936) and an Alfred P. Sloan research fellowship. The author would like to thank Martin de Borbon and Christiano Spotti for 
useful comments and the suggestion of adding the example \ref{exmp-C2D}, and to thank Henri Guenancia and Behrouz Taji for their interest and especially to Behrouz Taji for very helpful comments and suggestions about orbifold structures. His thanks also go to Yuchen Liu, Xiaowei Wang and Chenyang Xu for helpful discussions. The author would like to thank Professor Gang Tian for his interest in this work and constant support through the years.

\section{Pull back of orbifold (co-)tangent sheaves}

\subsection{General constructions}\label{sec-genlog}
We first define the pull back of the sheaf of logarithmic 1-forms along $\bQ$-divisors by combining the constructions in \cite{Lan03, GT16}.  Let $(X, x)$ be an $(n+1)$-dimensional germ of normal affine variety and let $D=\sum_{i} \delta_i D_i$ be a $\bQ$-divisor with $\delta_i\in [0,1]$. 
Choose a log resolution $\mu_X: (\tilde{X}, \tilde{D}=(\mu_X)_*^{-1}D, E_x)\rightarrow (X, D, x)$ where $E_x=\sum_j E_j$ is a simple normal crossing divisor that is contracted to $x$. 

By Kawamata's covering lemma, we can choose a very ample divisor $H_{\til{X}}$ over $\tilde{X}$ such that $H_{\til{X}}+\til{D}+E_x$ has simple normal crossings and construct a finite morphism $\sigma_{\tilde{X}}: \tilde{Y}\rightarrow \tilde{X}$ of degree $N$ which is a ramified Galois cover with group $G$ and it satisfies:
\begin{enumerate}
\item $\sigma_{\til{X}}$ is \'{e}tale over the complement of $\sum_{i} \tilde{D}_i+H_{\til{X}}$.
\item $\sigma_{\tilde{X}}^*(\til{D}+H_{\til{X}}+E_x)$ is a simple normal crossing Weil divisor.
\item Near any point $y_0\in \tilde{Y}$, there exists a $G$-invariant open set $U\ni y_0$, a system of coordiantes $\{w_k\}$ centered at $y_0$, a system of coordinates $\{z_k\}$ near $\sigma_{\til{X}}(y_0)$ and an integer $p=p(y_0)$ such that, with respect to these coordinates, the map $\sigma_{\til{X}}$ is locally expressed as:
\begin{eqnarray*}
&&(w_1,\dots, w_p, w_{p+1}, \dots, w_{n+1}) \mapsto \\
&& \hskip 2cm (w_1, \cdots, w_p, w_{p+1}^N,  \dots, w^N_{n+1})=(z_1,\cdots, z_{n+1}).
\end{eqnarray*}
\end{enumerate} 
Let $\sigma_X: Y\rightarrow X$ be the Stein factorization of the composition $\mu_X\circ \sigma_{\til{X}}: \tilde{Y}\rightarrow X$. Then $\sigma_X^*D$ is an integral Weil divisor and we have the commutative diagram:
 \[
 \begin{tikzcd}
  \tilde{Y}\arrow{r}{\mu_Y}\arrow{d}[swap]{\sigma_{\til{X}}} & Y
  \arrow{d}{\sigma_X}\\
  \til{X} \arrow{r}{\mu_X} & X
 \end{tikzcd}
 \]
Denote $D'_i=\sigma_{\til{X}}^{-1}D_i$ and $H'_{\til{X}}=\sigma_{\til{X}}^{-1}H_{\til{X}}$. 
By construction, the ramification divisor of $\sigma_{\til{X}}$ is equal to $(N-1)\sum_i D'_i+(N-1)H'_{\til{X}}$. In other words we have the following identity:
\[
K_{\tilde{Y}}=\sigma_{\til{X}}^*K_{\tilde{X}}+\sum_i  (N-1)D'_i+(N-1)H'_{\til{X}}
\]
Hence the pull back of the log canonical divisor $K_{\tilde{X}}+\til{D}+E_x$ under $\sigma_{\til{X}}$ is given by:
\begin{eqnarray}\label{eq-HurX}
\sigma_{\til{X}}^*\left(K_{\til{X}}+\tilde{D}+E_x\right)
&=&K_{\til{Y}}-(N-1)\left(\sum_i D'_i+H'_{\til{X}}\right)+\sum_i N\delta_i D'_i+\sum_j E'_j\nonumber\\
&=&K_{\til{Y}}+\sum_i(1-N+N\delta_i)D'_i+\sum_j E'_j+(1-N)H'_{\til{X}}\\
&=:&K_{\til{Y}}+G.
\end{eqnarray}
\begin{nota}
Write $G=\sum_i d_i G_i$ where each irreducible component of ${\rm supp}(G)$ is either equal to $D'_i$, $E'_j$ or $H'_{\til{X}}$, and $d_i$ is equal to $1-N+N\delta_i$, $1$ or $1-N$ correspondingly.

\noindent
Note that if $\delta_i=1$, then $d_i=1-N+N\delta_i=1$, while if $\delta_i\in [0,1)$, then $d_i=1-N+N\delta_i\le 0$.
\end{nota}
In the language of \cite{GT16} (see Definition \ref{defn-adapted}), $\sigma_{\til{X}}$ is a global {\it adapted morphism} defining an orbifold structure on the pair $(\til{X}, \til{D}+E_x)$. This explains the terminology in the following definition.
\begin{defn}\label{defn-virXD}
With the above notations, 
the pull back of the orbifold tangent sheaf of $(\til{X}, \til{D}+E_x)$ with respect to $\sigma_{\til{X}}$, denoted by
$\sigma_{\til{X}}^*\Omega_{\til{X}}^1(\log(\tilde{D}+E_x))$, is defined to be the $\cO_{\til{Y}}$-module locally given by:
\[
\sum_{i=p+1}^{n+1} \cO_{\til{Y}} w_{i}^{-d_i}d w_i+\sigma_{\til{X}}^*\Omega^1_{\til{X}}.
\]
We will also denote this sheaf by $\Omega^1_{\til{Y}}(\log(G))$ since most of the time we will calculate directly over $\til{Y}$ . 

Dually, the pull back of the orbifold tangent sheaf of $(\til{X}, \til{D}+E_x)$ with respect to $\sigma_{\til{X}}$, denoted by $\sigma_{\til{X}}^*T_{\til{X}}(-\log(\til{D}+E_x))$, is defined to be the $\cO_{\til{Y}}$-module locally given by:
\[
\sum_{i=p+1}^{n+1} \cO_{\til{Y}} w_{i}^{d_i}\frac{\partial}{\partial w_i}+\sum_{i=1}^p \cO_{\til{Y}}\frac{\partial}{\partial w_i}.
\]
We will also denote this sheaf by $T_{\til{Y}}(-\log(G))$.
\end{defn}
\begin{rem}
\begin{enumerate}
\item
We have following identities which shows that the above definition is the same as in \cite[2]{Lan03}:
\[
w_i^{-d_i}dw_i=\frac{w_i^{N\delta_i-d_i}dw_i}{w_i^{N\delta_i}}=\frac{w_i^{N-1}dw_i}{w_i^{N\delta_i}}=\frac{\sigma_{\til{X}}^*dz_i}{w_i^{N\delta_i}}.
\]
\item
By definition, the above sheaves depend on the choice of the log resolution $(\til{X}, \til{D})\rightarrow (X, D)$ and the ramified covering $\til{Y}\rightarrow \til{X}$.
However they transform naturally if different choices are made in the construction. So these sheaves should be considered as representations of orbifold (co-)tangent sheaves associated to the original pair $(X, D)$ (called ``virtual sheaves" in \cite[2]{Lan03}). See also appendix \ref{app-orbifold}.
\end{enumerate}
\end{rem}

\subsection{Euler numbers for log canonical surface singularities}
In this section, we assume that $(X,D, x)$ is a log canonical surface singularity and carry out the construction described in the previous section. 
\begin{dfpr}[\cite{Lan03}]\label{defn-locEuler}
Let $(X, D, x)$ be a log canonical surface singularity. With the notations in previous section,
the local Euler number of the pair $(X, D)$ at $x$ is defined by (see the following remark):
\begin{equation}\label{eq-locEuler}
e_{\rm orb}(x, X,D)=-\frac{c_2\left(\mu_Y, \sigma_{\til{X}}^*\Omega^1_{\tilde{X}}((\mu_{X})_*^{-1}D+E_x)\right)}{\deg \sigma}.
\end{equation}
This is well defined and does not depend on the choice of the log resolution $\mu_X: (\til{X}, \mu_*^{-1}(D), E_x)\rightarrow (X, D, x)$ or the covering $\sigma_{\til{X}}: \til{Y}\rightarrow \til{X}$.
\end{dfpr}
The numerical invariant $c_2(\mu_Y, \scr{F}):=c_2(\mu_Y, \scr{F}^{**})$ on the right-hand-side of \eqref{eq-locEuler} (called local Chern class) was defined in \cite{Lan03} for any locally free sheaf $\scr{F}$ over $\tilde{Y}$.
In the current paper, it's not very important what is the exact formula for the $c_2$. We just mention that this term arises in Langer's proof of Miyaoka-Yau's inequality for general log canonical surfaces and it's conjectured to coincide with Wahl's local second Chern class from \cite{Wah93} when $D=0$.  This is indeed the case for surface cone singularities and follows essentially from Wahl's calculations in \cite{Wah93}. Here we only need the following formula from \cite{Lan03}, which motivates us to consider the case of cone singularities in the following subsection.

\begin{thm}[{\cite[Theorem 1.10]{Lan03}}]\label{thm-coneuler}
Let $\scr{E}$ be a rank-2 vector bundle on a smooth projective curve $C$ and let $L$ be a line bundle with degree $d>0$. Set $e=\det \scr{E}$ and
\begin{equation}
\bar{s}=\bar{s}(\scr{E})=\max\left(\frac{1}{2}e, \max\{\deg\scr{L}: \scr{L} \subset \scr{E}\} \right).
\end{equation}
Let $\tilde{X}$ be the total space of a line bundle $L^{-1}$ and let $\pi: \tilde{X}\rightarrow C$ the canonical projection. Let $\mu_X: \tilde{X}\rightarrow X$ be the contraction of the zero section of $L^{-1}$. Then
\[
c_2(\mu_X, \pi^*\scr{E})=-\bar{s}(e-\bar{s})/d \ge -\frac{e^2}{4d}.
\]
In particular, if $\scr{E}$ is semistable then $c_2(\mu, \pi^*\scr{E})=-\frac{e^2}{4d}=-\frac{c_1(\scr{E})^2}{4d}$.
\end{thm}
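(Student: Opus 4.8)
The plan is to compute the local second Chern class directly on the resolution $\tilde{X}=\mathrm{Tot}(L^{-1})$ using the geometry of its single exceptional curve together with Mumford's $\bQ$-valued intersection theory, and then to optimize the resulting expression over sub-line-bundles of $\scr{E}$. First I would record the relevant geometry: the zero section $C_0\subset\tilde{X}$ is the only $\mu_X$-exceptional curve, $\pi$ restricts to an isomorphism $C_0\cong C$, and since the normal bundle of $C_0$ is $L^{-1}$ we have $C_0^2=-\deg L=-d<0$. Thus the exceptional intersection form is the negative-definite $1\times 1$ form $(-d)$, and for any Weil divisor class $D$ on $X$ the numerical (Mumford) pullback $\mu_X^*D=\tilde{D}+a\,C_0$ is determined by the single orthogonality condition $\mu_X^*D\cdot C_0=0$, which is solved explicitly because $C_0^2=-d$.

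Next I would exploit the structure of $\pi^*\scr{E}$ as a pullback. Choosing a saturated sub-line-bundle $\scr{L}\subset\scr{E}$ of degree $s$ with line-bundle quotient $\scr{M}=\scr{E}/\scr{L}$ of degree $e-s$, pullback gives a short exact sequence $0\to\pi^*\scr{L}\to\pi^*\scr{E}\to\pi^*\scr{M}\to 0$ on $\tilde{X}$. Recalling that the local $c_2$ obeys a Whitney-type additivity in which the rank-one terms drop out (the local $c_2$ of a line bundle vanishes), the quantity $c_2(\mu_X,\pi^*\scr{E})$ reduces to the localized cross term, which—being supported on the exceptional curve—is the intersection of the exceptional parts of the Mumford pullbacks of $c_1(\pi^*\scr{L})$ and $c_1(\pi^*\scr{M})$. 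Numerically $c_1(\pi^*\scr{L})\equiv sF$ and $c_1(\pi^*\scr{M})\equiv(e-s)F$ with $F$ a fiber; since $F\cdot C_0=1$, the exceptional part of $\mu_X^*(sF)$ is $\tfrac{s}{d}C_0$ and that of $\mu_X^*((e-s)F)$ is $\tfrac{e-s}{d}C_0$. Their intersection is
\[
\frac{s}{d}C_0\cdot\frac{e-s}{d}C_0=\frac{s(e-s)}{d^2}\,C_0^2=-\frac{s(e-s)}{d},
\]
the crucial minus sign coming precisely from the negative-definiteness $C_0^2=-d$. This already matches the desired formula for a fixed choice of $\scr{L}$.

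The remaining, and I expect hardest, point is to identify which sub-line-bundle governs the intrinsic local $c_2$ and to explain the appearance of the balanced value $e/2$. Since $c_2(\mu_X,\pi^*\scr{E})$ is computed from the reflexive hull and is independent of any chosen filtration, I would show that its value equals the extremum of $-s(e-s)/d$ as $s$ ranges over the admissible saturated sub-line-bundle degrees, with the optimum realized at $\bar{s}=\max\!\big(\tfrac12 e,\ \max\{\deg\scr{L}:\scr{L}\subset\scr{E}\}\big)$. Concretely: if $\scr{E}$ is unstable the maximal destabilizing sub-line-bundle of degree $s_{\max}>e/2$ forces $\bar{s}=s_{\max}$, whereas if $\scr{E}$ is semistable every sub-line-bundle has degree $\le e/2$ and the balanced reference $\bar{s}=e/2$ is selected. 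This is the step where the semistability hypothesis genuinely enters, and it is where I would rely on Wahl's explicit curvature/reflexive-hull computation for cone singularities to pin down the optimum; establishing that the intrinsic local $c_2$ coincides with this extremal value is the main obstacle.

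Finally the two displayed conclusions follow formally. Because $\bar{s}\ge e/2$ in all cases and $s\mapsto s(e-s)$ is decreasing for $s\ge e/2$ with value $e^2/4$ at $s=e/2$, we obtain $\bar{s}(e-\bar{s})\le e^2/4$, hence
\[
c_2(\mu_X,\pi^*\scr{E})=-\frac{\bar{s}(e-\bar{s})}{d}\ge-\frac{e^2}{4d},
\]
with equality exactly when $\bar{s}=e/2$, that is when $\scr{E}$ is semistable, giving $c_2(\mu_X,\pi^*\scr{E})=-\tfrac{e^2}{4d}=-\tfrac{c_1(\scr{E})^2}{4d}$.
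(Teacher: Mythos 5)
First, a point of order: the paper you are working from does not prove this statement at all --- it is quoted verbatim from Langer \cite[Theorem 1.10]{Lan03} and used as a black box, so the only proof to compare against is Langer's.

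Your proposal gets the easy surrounding pieces right ($C_0^2=-d$, Mumford pullback determined by orthogonality to $C_0$, and the final formal deduction that $\bar{s}\ge e/2$ forces $-\bar{s}(e-\bar{s})/d\ge -e^2/(4d)$ with equality exactly in the semistable case), but the central step is not just unproved --- the mechanism you propose for it cannot work. If the local $c_2$ were Whitney-additive on the pulled-back sequence $0\to\pi^*\scr{L}\to\pi^*\scr{E}\to\pi^*\scr{M}\to 0$ with the rank-one terms dropping out, you would obtain $c_2(\mu_X,\pi^*\scr{E})=-s(e-s)/d$ for \emph{every} degree $s$ of a saturated sub-line-bundle; since a rank-2 bundle admits saturated sub-line-bundles of infinitely many degrees and $-s(e-s)/d$ separates them, this is self-contradictory, so the claimed additivity must fail. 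It fails because $c_2(\mu_X,\cdot)$ is not computed by naive Chern--Whitney calculus on $\tilde{X}$: its definition compares $\pi^*\scr{E}$ with the reflexive hull $(\mu_{X*}\pi^*\scr{E})^{**}$ across the singular point, and pushforward followed by reflexive hull is not exact (there are $R^1\mu_{X*}$ and length corrections). Those corrections are precisely where the dependence on the \emph{maximal} sub-line-bundle degree, and the floor at $e/2$, enter. Your own framing exposes the problem in the semistable case: you would need the ``optimum'' to be attained at $s=e/2$, but a stable bundle generally has \emph{no} sub-line-bundle of degree $e/2$ (e.g.\ $e$ odd), so no choice of $\scr{L}$ in your cross-term computation can produce the answer $-e^2/(4d)$. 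The value $e/2$ comes from the $\frac{r-1}{2r}c_1(\mu_X,\pi^*\scr{E})^2$ term in the definition of the local second Chern class (here $c_1(\mu_X,\pi^*\scr{E})=-\frac{e}{d}C_0$, whose self-intersection is $-e^2/d$), not from any actual subsheaf. Langer's proof instead works with the graded structure of $\mu_{X*}\pi^*\scr{E}=\bigoplus_{k\ge0}H^0(C,\scr{E}\otimes L^k)$ and Wahl-type cohomological bookkeeping; you acknowledge you would ``rely on Wahl's explicit computation to pin down the optimum,'' but that reliance is the entire content of the theorem, so the proposal as written does not constitute a proof.
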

A. Langer in \cite[section 8]{Lan03} used the above formula to calculate $e_\orb$ for line arrangements $(\bC^2, \sum_{i=1}^m \delta_i L_i)$ with $m\le 3$, which was used in turn to calculate the $e_{\orb}$ for any log canonical pair $(X,D)$ with a fractional boundary (\cite[section 9]{Lan03}). As mentioned in the introduction, our semistability result will allow to calculate Langer's local Euler numbers for line arrangements consisting of any number of lines. 

We shall need one important property of local Euler numbers:
\begin{lem}[{\cite[Lemma 7.1]{Lan03}}]\label{lem-Eulerdeg}
If $\sigma: (X, D, x)\rightarrow (Z, D_Z, z)$ is a finite proper morphism of normal proper surface germs and $K_{X}+D=\sigma^*(K_Z+D_Z)$ for some boundary $\bQ$-divisor $D_Z$ on $Z$, then
\[
e_{\orb}(x, X,D)=\deg(\sigma) \cdot e_{\orb}(z, Z,D_Z).
\]
\end{lem}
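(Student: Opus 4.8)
The plan is to reduce the statement to two ingredients: first, the invariance of $e_{\orb}$ under the choice of log resolution and adapted cover recorded in Definition-Proposition~\ref{defn-locEuler}, which lets me compute both sides on a single compatible configuration; and second, the crepancy hypothesis $K_X+D=\sigma^*(K_Z+D_Z)$, which I will use to identify the adapted orbifold cotangent sheaf of $(X,D)$ with the pullback of that of $(Z,D_Z)$. Once these are in place, the claimed multiplicativity follows from a bookkeeping of covering degrees.

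First I would fix a log resolution $\mu_Z\colon \tilde{Z}\to Z$ together with a Kawamata-type cover $\sigma_{\tilde{Z}}\colon \tilde{W}\to \tilde{Z}$ as in Section~\ref{sec-genlog}, producing the adapted sheaf $\scr{F}_Z=\sigma_{\tilde{Z}}^*\Omega^1_{\tilde{Z}}(\log(\tilde{D}_Z+E_z))$ on $\tilde{W}$ and the Stein factorization $\sigma_Z\colon W\to Z$. I would then build a compatible configuration over $X$ by resolving the normalization of a suitable component of $X\times_Z\tilde{Z}$ to obtain $\mu_X\colon \tilde{X}\to X$ with a lift $\tilde{\sigma}\colon \tilde{X}\to \tilde{Z}$, and by choosing a cover $\sigma_{\tilde{X}}\colon \tilde{Y}\to \tilde{X}$ dominating $\tilde{X}\times_{\tilde{Z}}\tilde{W}$ (refining it by a further Kawamata cover if necessary, so that conditions (1)--(3) of Section~\ref{sec-genlog} hold). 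This yields a finite morphism $\psi\colon \tilde{Y}\to \tilde{W}$ over $\tilde{\sigma}$. Since $e_{\orb}$ does not depend on these choices, it suffices to compare the two local second Chern classes computed on $\tilde{Y}$ and $\tilde{W}$.

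The geometric heart of the argument is the identity $\scr{F}_X=\psi^*\scr{F}_Z$. Writing the ramification formula $K_X=\sigma^*K_Z+R$ with $R=\sum_j (r_j-1)D_j$ and combining it with crepancy gives $\sigma^*D_Z=D+R$; that is, the branch divisor of $\sigma$ is exactly absorbed into the boundary. Component by component this forces the coefficient transformation $\delta^X_j=1-r_j(1-\delta^Z_i)$, which is precisely the rule under which the logarithmic weights $d_i=1-N+N\delta_i$ of Definition~\ref{defn-virXD} pull back correctly, so that $\sigma$ lifts to an orbifold-\'etale map in the local charts. I would verify this directly in the coordinates of condition (3), matching the local generators $w_i^{-d_i}\,dw_i$ and $\sigma_{\tilde{X}}^*\Omega^1$ on the two covers via $\psi$. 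With $\scr{F}_X=\psi^*\scr{F}_Z$ in hand, the multiplicativity of Langer's local Chern class under finite covers gives $c_2(\mu_Y,\scr{F}_X)=\deg(\psi)\cdot c_2(\mu_W,\scr{F}_Z)$.

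Finally I would carry out the degree bookkeeping. Computing $\deg(\tilde{Y}/Z)$ along the two edges of the commutative square gives $\deg(\psi)\cdot\deg(\sigma_Z)=\deg(\sigma_X)\cdot\deg(\sigma)$, whence
\[
e_{\orb}(x,X,D)=-\frac{c_2(\mu_Y,\scr{F}_X)}{\deg(\sigma_X)}=-\frac{\deg(\psi)}{\deg(\sigma_X)}\,c_2(\mu_W,\scr{F}_Z)=\deg(\sigma)\cdot\left(-\frac{c_2(\mu_W,\scr{F}_Z)}{\deg(\sigma_Z)}\right)=\deg(\sigma)\cdot e_{\orb}(z,Z,D_Z).
\]
The main obstacle is constructing a genuinely compatible resolution-and-cover tower: since $\sigma$ need not be flat or \'etale, the relevant fiber products are singular and must be normalized and re-resolved, and one must ensure that conditions (1)--(3) can be met simultaneously on both towers so that $\psi$ exists and the clean identification $\scr{F}_X=\psi^*\scr{F}_Z$ holds on the nose rather than merely after reflexive modification. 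Controlling these modifications along the exceptional loci $E_x$ and $E_z$, and confirming that they do not alter the local $c_2$ (which is computed on the reflexive hull), is the delicate point.
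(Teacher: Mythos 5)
The paper offers no proof of this lemma: it is quoted directly from \cite[Lemma 7.1]{Lan03}, so there is no in-paper argument to compare against. Your proposal is, in substance, a correct reconstruction of Langer's own proof: the crepancy hypothesis yields exactly the coefficient rule $\delta^X_j=1-r_j(1-\delta^Z_i)$ under which $\sigma$ becomes orbifold-\'{e}tale, so the adapted cotangent sheaves satisfy $\scr{F}_X=\psi^*\scr{F}_Z$ (note that coefficient-one components, including the exceptional divisors $E_x$, $E_z$, pull back to log poles regardless of ramification), and the conclusion then follows from the functoriality of the local $c_2$ under finite morphisms plus your degree bookkeeping. The two delicate points you flag are the right ones, and the reflexivity worry is harmless here since reflexive sheaves on smooth surfaces are locally free and $\psi$ is finite flat, so the identification need only be checked in codimension one.
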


\subsection{Log cone singularity}\label{sec-cone}
Here we specialize the constructions in \ref{sec-genlog} to the case of cone singularities. Let $S$ be a normal projective variety of dimension $n$, $L$ an ample Cartier divisor on $S$ and $X=C(S, L)={\rm Spec}_{\bC}\left(\oplus_{k=0}^{+\infty} H^0(S, kL)\right)$ the corresponding affine cone. Let $\Delta=\sum_i \delta_i \Delta_i$ be an effective $\bQ$-divisor and $D=C(\Delta,L)$ the corresponding $\bQ$-divisor on $C(S, L)$. We will assume $-(K_S+\Delta)$ is $\bQ$-Cartier. Let $x\in X$ denote the closed point of the cone defined by the maximal ideal 
$\bigoplus_{k=1}^{+\infty}H^0(S, kL)$. Then a basic fact for us is:
\begin{lem}[{\cite[Lemma 3.1]{Kol13}}]\label{lem-coneklt}
With the above notations, $(X, D)$ has klt singularities if and only if $-(K_S+\Delta)=\lambda L$ with $\lambda>0$ and $(S, \Delta)$ is klt. $(X, D)$ is has log canonical singularities if and only if $-(K_S+\Delta)=\lambda L$ with $\lambda \ge 0$ and $(S, \Delta)$ is log canonical.
\end{lem}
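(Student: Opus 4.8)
The plan is to reduce both biconditionals to a single discrepancy computation on the natural partial resolution of the vertex, together with the transverse geometry of the cone over $S$. First I would record why the proportionality $-(K_S+\Delta)=\lambda L$ must appear at all: the punctured cone $X^\circ=X\setminus\{x\}$ is the complement of the zero section in the total space of $L^{-1}$, hence a $\bC^*$-bundle over $S$, and its class group is $\mathrm{Cl}(S)/\bZ\langle L\rangle$. Since $X$ is affine with a good $\bC^*$-action contracting everything to $x$, a $\bQ$-divisor through $x$ can be $\bQ$-Cartier at $x$ only if its class becomes trivial there; applied to $K_X+D$, whose restriction to $X^\circ$ is the pullback of $K_S+\Delta$, this forces $K_S+\Delta\in\bQ\cdot L$, i.e. $-(K_S+\Delta)=\lambda L$ for some $\lambda\in\bQ$. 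This is exactly the condition under which $K_X+D$ is $\bQ$-Cartier and the notions klt/lc make sense.

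Next I would introduce the partial resolution $\mu\colon \what X\to X$, where $\what X$ is the total space of $L^{-1}$ and $\mu$ contracts the zero section $E\cong S$ to $x$; write $p\colon \what X\to S$ for the bundle projection, so that $\hat D=\mu_*^{-1}D=p^*\Delta$ and $E|_E=-L$ (the normal bundle of the zero section is $L^{-1}$). The key step is the discrepancy of $E$: writing $K_{\what X}+\hat D=\mu^*(K_X+D)+a\,E$ and restricting via adjunction, one has $(K_{\what X}+E)|_E=K_E=K_S$, while $(\mu^*(K_X+D))|_E\equiv 0$ and $\hat D|_E=\Delta$, so $K_S+\Delta=(a+1)\,E|_E=-(a+1)L$, hence $a=\lambda-1$. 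Thus the single exceptional divisor $E$ already detects the sign of $\lambda$: its discrepancy is $>-1$ iff $\lambda>0$ and $\ge -1$ iff $\lambda\ge 0$.

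Finally I would rewrite the crepant relation as $K_{\what X}+\hat D+(1-\lambda)E=\mu^*(K_X+D)$, so that the total discrepancies of $(X,D)$ and of the pair $(\what X,\hat D+(1-\lambda)E)$ coincide. Since $p\colon \what X\to S$ is a line bundle it is a smooth morphism, and near $E$ the triple $(\what X,\hat D,E)$ is analytically a product $(S,\Delta)\times(\bA^1,\{0\})$; by the product/adjunction description of singularities this pair is klt iff $(S,\Delta)$ is klt and the coefficient $1-\lambda<1$, and lc iff $(S,\Delta)$ is lc and $1-\lambda\le 1$. Away from $E$ one has $\what X\setminus E\cong X^\circ$, a $\bC^*$-bundle over $S$, whose singularities likewise match those of $(S,\Delta)$. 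Combining the two regions yields: $(X,D)$ is klt iff $(S,\Delta)$ is klt and $\lambda>0$, and lc iff $(S,\Delta)$ is lc and $\lambda\ge 0$, which is the claim.

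I expect the main obstacle to be the case where $S$ itself is singular, so that $\what X$ is not smooth and $E$ is not the only exceptional divisor one must track: one has to argue that the divisors extracted over $S$ contribute the same discrepancies whether computed over $(X,D)$ or over $(S,\Delta)$. This is precisely where the $\bC^*$-bundle (local product) structure of $X^\circ\to S$ is essential, both to justify the product description of singularities near $E$ and to manage the boundary-coefficient bookkeeping at the threshold $\lambda=0$ in the log canonical case.
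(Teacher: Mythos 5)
The paper does not actually prove this lemma --- it is quoted verbatim from \cite[Lemma 3.1]{Kol13} --- and your argument is a correct reconstruction of the standard proof given there: the proportionality $-(K_S+\Delta)=\lambda L$ forced by the class group of the punctured cone, the computation $a(E)=\lambda-1$ by adjunction along the zero section $E\cong S$ of the partial resolution $\hat{X}\to X$ (using $E|_E=-L$), and the crepant comparison of $(X,D)$ with $(\hat{X},\hat{D}+(1-\lambda)E)$ together with the Zariski-local product structure $\hat{X}|_U\cong U\times\bA^1$ (genuinely Zariski-local since $L$ is Cartier), which reduces klt/lc near $E$ to the same condition on $(S,\Delta)$ plus the coefficient bound $1-\lambda<1$, respectively $\le 1$. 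Your closing paragraph correctly isolates the only delicate points --- singular $S$ and the threshold $\lambda=0$, where one invokes the product description or inversion of adjunction along the Cartier divisor $E$ --- so the proposal matches the cited source's approach and has no gap.
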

Let $\hat{X}\rightarrow X$ denote the total space of the line bundle $L^{-1}$ and $\pi_S: \hat{X}\rightarrow S$ denote the natural projection. Let $\hat{\mu}_X: \hat{X}\rightarrow X$ denote the birational contraction of the zero section of $L^{-1}$. The unique exceptional divisor of $\hat{\mu}_X$ is isomorphic to $S$. Now we choose a log resolution $\mu_S: (\tilde{S}, \til{\Delta})\rightarrow (S, \Delta)$ 
and $(\tilde{X}, \tilde{D}):=(\hat{X}, \pi_S^{-1}(\Delta))\times_S\til{S}$ with a natural projection $\mu_{\hat{X}}: \til{X}\rightarrow \hat{X}$. $\til{X}$ is just the total space of the line bundle $\mu_S^*L$ with the natural projection $\pi_{\til{S}}: \til{X}\rightarrow \til{S}$. The natural morphism $\mu_X:=\hat{\mu}_X\circ \mu_{\hat{X}}: (\til{X}, \til{D}) \rightarrow (X, D)$ is a log resolution whose exceptional divisor over $x$ is given by:
\[
E_x=\mu_X^{-1}(x)\cong \tilde{S}. 
\]
Now we apply Kawamata's covering lemma to $(\tilde{S}, \tilde{\Delta})$ as in the previous subsection. In other words, we choose a very ample divisor $H$ such that the support of $\til{\Delta}+H$ has simple normal crossings and construct a finite morphism $\sigma_{\til{S}}: S'\rightarrow \til{S}$ of degree $N$ which is a ramified Galois cover with group $G$ and it satisfies:
\begin{enumerate}
\item[(i)] $\sigma_{\til{S}}$ is \'{e}tale over the complement of $\sum_{i}\til{\Delta}_i+H$.
\item[(ii)] $\sigma_{\til{S}}^*(\til{\Delta}+H)$ is a simple normal crossing Weil divisor.
\item[(iii)] Near any point $y_0\in S'$, there exists a $G$-invariant open set $U\ni y_0$, a system of coordiantes $\{w_k\}$ centered at $y_0$, a system of coordinates $\{z_k\}$ near $\sigma_{\til{S}}(y_0)$ and an integer $p=p(y_0)$ such that, with respect to these coordinates, the map $\sigma_{\til{S}}$ is locally expressed as:
\[
(w_1, \dots, w_n) \mapsto (w_1,\dots, w_p, w_{p+1}^N, \dots,  w^N_{n})=(z_1,\dots, z_p, z_{p+1}, \dots, z_{n}).
\]
\end{enumerate}
We denote fiber product $S'\times_{\til{S}}\til{X}$ by $\til{Y}$. Then $\til{Y}$ is nothing but the total space of $\sigma_{\til{S}}^*\mu_{S}^*L$, and we have the following commutative diagram:
\begin{equation}\label{eq-CDcone}
\begin{CD}
\tilde{Y} @>\sigma_{\til{X}}>>  \tilde{X} @>\mu_{\hat{X}}>> \hat{X} @>\hat{\mu}_X >> X\\
@V\pi_{S'}VV  @V\pi_{\til{S}} VV @V\pi_S VV \\
S' @>\sigma_{\til{S}}>> \til{S} @>\mu_S>> S  
\end{CD}
\end{equation}
 
As before, we have the identity: \begin{eqnarray}\label{eq-HurX}
\sigma_{\til{S}}^*\left(K_{\til{S}}+\tilde{\Delta}\right)
&=&K_{S'}+\sum_i(1-N+N\delta_i)\Delta'_i-(N-1)H'\\
&=:&K_{S'}+B.
\end{eqnarray}
\begin{nota}\label{nota-B}
Write $B=\sum_i d_i B_i$. Each irreducible component of ${\rm supp}(B)$ is equal to $\Delta'_i$ or $H'$, and $d_i$ is equal to $1-N+N\delta_i$ or $1-N$ correspondingly. 
\end{nota}
Similar to \ref{defn-virXD}, we define:
\begin{defn}\label{defn-virSDel}
The pull back of the orbifold cotangent sheaf of $(\til{S}, \til{\Delta})$ with respect to $\sigma_{\til{S}}$, denoted by $\sigma_{\til{S}}^*\Omega_{\til{S}}^1(\log(\tilde{\Delta}))$, is defined to be the $\cO_{S'}$-module locally given by:
\[
\sum_{i=p+1}^n  \cO_{S'}\cdot w_i^{-d_i} dw_i+\sigma_{\til{S}}^*\Omega^1_{\til{S}}.
\]
We will also denote such a sheaf by $\Omega^1_{S'}(\log(B))$.

Dually, the pull back of the orbifold tangent sheaf of $(\til{S}, \til{\Delta})$ with respect to $\sigma_{\til{S}}$, denoted by $\sigma_{\til{S}}^*T_{\til{S}}(-\log(\til{\Delta}))$, is defined to be the $\cO_{S'}$-module locally given by:
\[
\sum_{i=p+1}^n \cO_{S'}\cdot w_i^{d_i}\frac{\partial}{\partial w_i}+\sum_{i=1}^p \cO_{S'}\cdot \frac{\partial }{\partial w_i}.
\]
We will also denote such a sheaf by $T_{S'}(-\log(B))$.
\end{defn}

By definition, there is a natural injection of the sheaves $\sigma_{\til{S}}^*\Omega_{\til{S}}^1\hookrightarrow \sigma_{\til{S}}^*\Omega_{\til{S}}^1(\log(\til{\Delta}))$. We will denote by $\phi$ the
induced map on the cohomologies: 
\begin{equation}\label{eq-injcoh}
\phi: H^1(S', \sigma_{\til{S}}^*\Omega_{\til{S}}^1)\rightarrow H^1(S', \sigma_{\til{S}}^*\Omega^1_{\til{S}}(\log(\til{\Delta})))=H^1(S', \Omega^1_{S'}(\log(B))).
\end{equation} 

On the other hand, in Definition \ref{defn-virXD} of the previous subsection, we have defined \\
$\sigma_{\tilde{X}}^*\Omega^1_{\til{X}}(\log(\til{D}+E_x))=\Omega^1_{X'}(\log(G))$
and its dual $\sigma_{\til{X}}^*T_{\til{X}}(-\log(\til{D}+E_x))=T_{X'}(-\log(G))$.
The main goal in this section is to prove the following result which generalizes \cite[Proposition 3.3]{Wah76}.
\begin{prop}\label{prop-base2cone}
With the above notations, there is an exact sequence on $\til{Y}$:
\begin{equation}
\xymatrix 
{
0 \ar[r]  & \pi_{S'}^*\sigma_{\til{S}}^*\Omega^1_{\til{S}}(\log(\til{\Delta})) 
\ar@{=}[d] \ar[r]  &  \sigma_{\til{X}}^* \Omega^1_{\til{X}}(\log(\til{D}+E_x)) \ar[r] \ar@{=}[d] & 
\cO_{\til{Y}}
 \ar[r] \ar@{=}[d] & 0 \\
0 \ar[r] &  \pi_{S'}^*\Omega^1_{S'}(\log(B)) \ar[r]  & \Omega_{\til{Y}}^1(\log(G))  \ar[r]
     & \cO_{\til{Y}}  \ar[r] & 0
}
\end{equation}

If we let $\scr{E}_{S'}^{\vee}=\sigma_{\til{X}}^*\Omega^1_{\tilde{X}}(\log(\til{D}+E_x))\otimes_{\cO_{\til{Y}}}{\cO_{S'}}$, then the above sequence is the pull back via $\pi_{S'}$ via the following exact sequence on $S'$:
\begin{equation}\label{eq-ExtS}
\xymatrix
{
0 \ar[r] & \sigma_{\til{S}}^*\Omega^1_{\til{S}}(\log(\til{\Delta})) \ar@{=}[d] \ar[r] & \scr{E}_{S'}^{\vee} \ar@{=}[d] \ar[r] & \cO_{S'} \ar@{=}[d] \ar[r] & 0\\
0 \ar[r] & \Omega^1_{S'}(\log(B)) \ar[r] & \scr{E}_{S'}^{\vee} \ar[r] &  \cO_{S'} \ar[r] & 0.
}
\end{equation}
Moreover, the extension class of the exact sequence \eqref{eq-ExtS} is given by $\Phi(c_1(L))$ where $\Phi$ is the composition of the following natural homomorphism of cohomology groups (
$\delta$ is the natural connecting morphism for the exact sequence and
$\phi$ was given in \eqref{eq-injcoh}):
\begin{eqnarray*}
&&H^1(\til{S}, \cO_{\til{S}}^*)\stackrel{{\bf \delta}}{\longrightarrow} H^1(\til{S}, \Omega_{\til{S}}^1)\stackrel{\sigma_{\til{S}}^*}{\longrightarrow} H^1(S', \sigma_{\til{S}}^*\Omega_{\til{S}}^1)\\
&&\hskip 5cm \stackrel{\phi}{\longrightarrow} H^1(S', \sigma_{\til{S}}^*\Omega^1_{\til{S}}(\log(\til{\Delta})))=H^1(S', \Omega^1_{S'}(\log(B))).
\end{eqnarray*}
\end{prop}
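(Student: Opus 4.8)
The plan is to realize the entire statement as the relative logarithmic cotangent sequence of the total space of a line bundle. The geometric input, already visible in the diagram \eqref{eq-CDcone}, is that $\pi_{S'}\colon \til{Y}\to S'$ is the total space of the line bundle $M:=\sigma_{\til{S}}^*\mu_S^*L$, that the divisor $E'_x=\sigma_{\til{X}}^{-1}(E_x)$ is precisely its zero section $S'\hookrightarrow \til{Y}$, and that the remaining components of $G$ are the vertical divisors $\pi_{S'}^{-1}(B_i)$ lying over the components $B_i$ of $B$ (Notation \ref{nota-B}). I would record this normal form first, so that near a point of $S'$ I may use coordinates $w_1,\dots,w_n$ on $S'$ together with a fiber coordinate $t$ for $M$, in which $E'_x=\{t=0\}$ and the vertical part of $G$ is cut out by the $w_i$.

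First I would build the top (hence bottom) row of the first diagram as the relative logarithmic cotangent sequence
\[
0\longrightarrow \pi_{S'}^*\Omega^1_{S'}(\log B)\longrightarrow \Omega^1_{\til{Y}}(\log G)\longrightarrow \Omega^1_{\til{Y}/S'}(\log E'_x)\longrightarrow 0 ,
\]
and then identify the relative term with $\cO_{\til{Y}}$. This last identification is the crux of the local computation: the relative logarithmic form $\tfrac{dt}{t}$ is a nowhere-vanishing section of $\Omega^1_{\til{Y}/S'}(\log E'_x)$, giving the trivialization $\Omega^1_{\til{Y}/S'}(\log E'_x)\cong \cO_{\til{Y}}$. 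The reason the kernel carries logarithmic poles along $B$ rather than being $\pi_{S'}^*\Omega^1_{S'}$ is exactly that $G$ contains the vertical divisors $\pi_{S'}^{-1}(B_i)$: a form $\tfrac{dw_i}{w_i}$ along such a divisor is $\pi_{S'}^*$ of the corresponding generator of $\Omega^1_{S'}(\log B)$. Matching the explicit local generators $w_i^{-d_i}dw_i$ and $\tfrac{dt}{t}$ against Definitions \ref{defn-virXD} and \ref{defn-virSDel} shows that the three terms are the orbifold (``virtual'') sheaves named in the statement; this is the step where one must track the horizontal poles (coming from $B$) and the vertical pole (coming from the zero section) simultaneously. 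For the non-orbifold case $\Delta=0$ this is \cite[Proposition 3.3]{Wah76}, and I would present the argument as its orbifold refinement.

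Next I would show that the $\til{Y}$-sequence is the $\pi_{S'}$-pullback of a sequence on $S'$. I would exploit the fiberwise $\bC^*$-action scaling $t$, under which all three sheaves are equivariant and the generators $\pi_{S'}^*\omega$ and $\tfrac{dt}{t}$ have weight zero. Taking the weight-zero part of $(\pi_{S'})_*$ (equivalently, restricting to the zero section $S'=E'_x$ and using that the quotient $\cO_{\til{Y}}$ is invertible, so exactness is preserved) produces the sequence \eqref{eq-ExtS} with middle term $\scr{E}_{S'}^{\vee}=\Omega^1_{\til{Y}}(\log G)\otimes_{\cO_{\til{Y}}}\cO_{S'}$, and reconstructs $\Omega^1_{\til{Y}}(\log G)\cong\pi_{S'}^*\scr{E}_{S'}^{\vee}$ together with the other two pullbacks. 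Finally, for the extension class I would run the standard Atiyah-class computation in the chosen trivializations: under a change of fiber coordinate $t\mapsto g\,t$ adapted to $M$ one has $\tfrac{d(gt)}{gt}-\tfrac{dt}{t}=\tfrac{dg}{g}$, so the obstruction to globalizing the splitting $\tfrac{dt}{t}$ is the \v{C}ech cocycle $\{d\log g_{\alpha\beta}\}$ representing the image of $[M]=[\sigma_{\til{S}}^*\mu_S^*L]$ under the connecting map $\delta$. Tracking this cocycle through $\sigma_{\til{S}}^*$ and the inclusion $\phi$ of \eqref{eq-injcoh} identifies the extension class of \eqref{eq-ExtS} with $\Phi(c_1(L))$, as claimed.

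The main obstacle I anticipate is not the formal homological algebra but the bookkeeping for the orbifold sheaves: one must verify that every identification above (the relative cotangent sequence, the trivialization by $\tfrac{dt}{t}$, the descent to $S'$, and the cocycle representing $\delta$) is compatible with the local generators $w_i^{\pm d_i}$ defining the virtual sheaves of Definitions \ref{defn-virXD}--\ref{defn-virSDel}, with the mixed horizontal/vertical logarithmic poles handled correctly at the ramification. Equally delicate is matching the Atiyah cocycle with the precise composite $\Phi$ --- in particular confirming that $\delta$ is the $d\log$ connecting homomorphism and that the line bundle whose class appears is $\mu_S^*L$ pulled back to $S'$, so that the output is literally $\Phi(c_1(L))$ rather than a twist thereof.
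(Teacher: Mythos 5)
Your proposal is correct and follows essentially the same route as the paper: the paper likewise works on the total space of $\sigma_{\til{S}}^*\mu_S^*L$ with the explicit local generators $w_i^{-d_i}dw_i$ and $d\xi/\xi$, reads off the two dual exact sequences from the change-of-basis formulas, observes that they descend to $S'$ because the transition data are pulled back, and identifies the extension cocycle as $\frac{d\xi^\beta}{\xi^\beta}-\frac{d\xi^\alpha}{\xi^\alpha}=\sigma_{\til{S}}^*\bigl(f_{\alpha\beta}^{-1}\partial f_{\alpha\beta}\bigr)$, i.e.\ the image of $c_1(L)$ under $\Phi$. Your packaging of the first step as the relative logarithmic cotangent sequence trivialized by $dt/t$, and of the descent as taking the fiberwise $\bC^*$-weight-zero part, is only a more structural phrasing of the same computation.
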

\begin{rem}
If $\til{\Delta}=0$, then we get back the result in \cite[Proposition 3.3]{Wah76}, whose proof will be generalized in the following proof.
\end{rem}
\begin{proof}
We choose an affine variable $\xi$ along the fibre of the line bundle: $\sigma_{\til{S}}^*\mu_S^*L$. Then by definition, upstairs on $\tilde{Y}$, $\sigma_{\til{X}}^*T_{\tilde{X}}(-\log(\til{D}+E_x))$ is locally spanned by:
\[
\frac{\partial}{\partial w_1},\dots, \frac{\partial}{\partial w_p},\quad w_{p+1}^{d_{p+1}}\frac{\partial}{\partial w_{p+1}} \dots, w_n^{d_n}\frac{\partial}{\partial w_n}, \xi\frac{\partial}{\partial \xi}.
\]
Dually $\sigma_{\til{X}}^*\Omega^1_{\tilde{X}}(\log(\til{D}+E_x))$ is spanned by:
\[
d w_1, \dots, d w_p,\quad w_{p+1}^{-d_{p+1}} dw_{p+1}, \dots,  w_n^{-d_n} \frac{d w_n}{w_n}, \frac{d\xi}{\xi}.
\]
Formally we have for $i=p+1,\dots, n$,
\begin{eqnarray*}
(w_i)^{d_i}\frac{\partial}{\partial w_i}&=& (z_i)^{d_i/N} N (w_i)^{N-1}\frac{\partial}{\partial z_i}=N (z_i)^{(d_i/N)+(1-N^{-1})}\frac{\partial}{\partial z_i}=N z_i^{\delta_i}\frac{\partial}{\partial z_i}.
\end{eqnarray*}
For the simplicity of notations, we let $\delta_1=\dots=\delta_p=0$ and write the generators above simply as:
\begin{eqnarray*}
&&\text{for } T_{\til{Y}}(-\log(G))=\sigma_{\til{X}}^*T_{\tilde{X}}(-\log(\til{D}+E_x)):\quad \left\{z_i^{\delta_i}\frac{\partial}{\partial z_i}, \xi \frac{\partial}{\partial \xi}\right\}=:\left\{\hat{D}_i, \hat{D}_\xi\right\}; \\
&& \text{for } \Omega^1_{\til{Y}}(\log G)=\sigma_{\til{X}}^*\Omega^1_{\tilde{X}}(\log(\til{D}+E_x)):\quad \left\{ \frac{dz_i}{z_i^{\delta_i}}, \frac{d\xi}{\xi}\right\}=:\left\{\hat{d}_i, \hat{d}_\xi\right\}.
\end{eqnarray*} 
Now consider coordinate change over $\til{S}$ on two overlapping coordinate neighborhoods $U_\alpha$ and $U_\beta$:
\[
\xi^\beta=f_{\alpha\beta}\xi^\alpha, \quad z_i^\beta=F_{i\alpha\beta}(z^\alpha).
\]
Then we can calculate the change of basis of $T_{\tilde{Y}}(-\log(G))$ over $Y$ (although we calculate formally on $X$, but they can all be pulled back to $Y$):
\begin{eqnarray*}
\hat{D}^\alpha_i&:=&(z_i^\alpha)^{\delta_i}\frac{\partial}{\partial z_i^\alpha}=(z_i^\alpha)^{\delta_i}\frac{\partial z_j^\beta}{\partial z_i^\alpha}\frac{\partial}{\partial z_j^\beta}+(z_i^\alpha)^{\delta_i} \frac{\partial f_{\alpha\beta}}{\partial z_i^\alpha}\xi^\alpha \frac{\partial}{\partial \xi^\beta}\\
&=& \frac{(z^\alpha_i)^{\delta_i}}{(z^\beta_j)^{\delta_j}}\frac{\partial z^\beta_j}{\partial z^\alpha_i}\hat{D}^\beta_j+(z^\alpha_i)^{\delta_i}f_{\alpha\beta}^{-1}\frac{\partial f_{\alpha\beta}}{\partial z^\alpha_i}\hat{D}^\beta_\xi; \\
\hat{D}^\beta_\xi&=&\xi^\beta\frac{\partial}{\partial \xi^\beta}=\xi^\alpha \frac{\partial}{\xi^\alpha}=:\hat{D}^\alpha_\xi.
\end{eqnarray*}
Dually we have the following change of basis for $\Omega^1_{\tilde{Y}}(\log(G))$:
\begin{eqnarray*}
&& \hat{d}^\beta_j=\frac{\partial z^\beta_j}{\partial z^\alpha_i} \frac{(z^\alpha_i)^{\delta_i}}{(z^\beta_j)^{\delta_j}}\hat{d}^\alpha_i, \quad
\frac{d\xi^\beta}{\xi^\beta}=\frac{d\xi^\alpha}{\xi^\alpha}+(z_i^\alpha)^{\delta_i} f_{\alpha\beta}^{-1}\frac{\partial f_{\alpha\beta}}{\partial z_i^\alpha}\hat{d}^\alpha_i.
\end{eqnarray*}
From the above change of basis, we easily get the following two exact sequences which are dual to each other:
\begin{eqnarray*}
&& 0 \longrightarrow \cO_{\til{Y}} \stackrel{p}{\longrightarrow} T_{\til{Y}}(-\log(G))\stackrel{q}{\longrightarrow} \pi_{S'}^*T_{S'}(-\log(B))\longrightarrow 0\\
&& \nonumber \\
&& 0\longrightarrow \pi_{S'}^*\Omega^1_{S'}(\log(B))\stackrel{q^\vee}{\longrightarrow} \Omega^1_{\til{Y}}(\log(G)) \stackrel{p^\vee}{\longrightarrow}\cO_{\til{Y}}\longrightarrow 0. 
\end{eqnarray*}
Indeed, the sheaf morphisms in the above exact sequences are locally given by:
\begin{eqnarray*}
&&p(1)=\hat{D}_\xi, \quad q(\hat{D}_i)=\hat{D}_i, \quad q(\hat{D}_\xi)=0; \\
&&q^\vee(\pi^*\hat{d}_i)=\hat{d}_i, \quad p^\vee(\hat{d}_i)=0, \quad p^\vee(\hat{d}_\xi)=1. 
\end{eqnarray*}

If we let $\scr{E}_{S'}=T_{\til{Y}}(-\log(G))\otimes_{\cO_{\til{Y}}} \cO_{S'}$, then these exact sequences are the pull-back via $\pi_{S'}$ of two dual exact sequences:
\begin{eqnarray}
&& 0\longrightarrow \cO_{S'}\longrightarrow \scr{E}_{S'} \longrightarrow T_{S'}(-\log(B))\longrightarrow 0\\
&& \nonumber \\
&& 0\longrightarrow \Omega^1_{S'}(\log(B))\longrightarrow \scr{E}_{S'}^\vee \longrightarrow \cO_{S'}\longrightarrow 0. \label{eq-cotext}
\end{eqnarray}

Moreover, the extension class of \eqref{eq-cotext} is given by the \v{C}ech cocycle:
\begin{eqnarray*}
c_{\alpha\beta}&=&\frac{d\xi^\beta}{\xi^\beta}-\frac{d\xi^\alpha}{\xi^\alpha}=(z^\alpha_i)^{\delta_i} f_{\alpha\beta}^{-1} \frac{\partial f_{\alpha\beta}}{\partial z^\alpha_i}\hat{d}^\alpha_i=\sigma_{\til{S}}^*\left(f_{\alpha\beta}^{-1}\frac{\partial f_{\alpha\beta}}{\partial z^\alpha_i}dz^\alpha_i\right). 
\end{eqnarray*}
Because $\{f_{\alpha\beta}^{-1}\frac{\partial f_{\alpha\beta}}{\partial z^\alpha_i}dz^\alpha_i\}$ is the image of $c_1(L)$ under the natural map
$\delta: H^1(\cO_{\til{S}}^*)\rightarrow H^1(\Omega^1_{\til{S}})$ and we have a natural map $\phi(\delta(c_1(L)))$ given in \eqref{eq-injcoh}, we easily get the last statement.
\end{proof}

From now on in this section, we assume that $(S, \Delta=\sum_i \delta_i\Delta_i)$ is a log smooth pair. Then in the construction of the previous section, we can choose $(\til{S}, \til{\Delta})=(S, \Delta)$ and $E_x=\mu_X^{-1}(x)\cong S$ and the commutative diagram in \eqref{eq-CDcone} simplifies to become:
\begin{equation}\label{eq-CDlogsmooth}
\begin{CD}
\tilde{Y} @>\sigma_{\til{X}}>>  \tilde{X} @>\mu_{X}>> X \\
@V\pi_{S'}VV  @V\pi_{S} VV  \\
S' @>\sigma_{S}>> S 
\end{CD}
\end{equation}
The following is then a corollary of Proposition \ref{prop-base2cone} in the case $(S, \Delta)$ is log smooth. Note that when $(S, \Delta)=(\bP^1, \sum_i \delta_ip_i)$, it also recovers the first statement of \cite[Lemma 8.8]{Lan03}.
\begin{cor}\label{cor-logsmooth}
Assume $(S, \Delta)$ is log smooth. With the same notations as Proposition \ref{prop-base2cone}, there is an exact sequence on $\til{Y}$:
\begin{equation}
\xymatrix{
0 \ar[r]  & \pi_{S'}^*\sigma_{S}^*\Omega^1_{S}(\log(\Delta))
\ar@{=}[d] \ar[r]  &   \sigma_{\til{X}}^* \Omega^1_{\til{X}}(\log(\til{D}+S)) \ar[r] \ar@{=}[d] & 
\cO_{\til{Y}}
 \ar[r] \ar@{=}[d] & 0 \\
0 \ar[r] &  \pi_{S'}^*\Omega^1_{S'}(\log(B)) \ar[r]  & \Omega_{\til{Y}}^1(\log(G))  \ar[r]
     & \cO_{\til{Y}}  \ar[r] & 0.
}
\end{equation}

If we let $\scr{E}_{S'}^{\vee}=\sigma_{\til{X}}^*\Omega^1_{\tilde{X}}(\log(\til{D}+S))\otimes_{\cO_{\til{Y}}}\cO_{S'}$, then the above sequence is the pull back via $\pi_{S'}$ via the following exact sequence on $S'$:
\begin{equation}\label{eq-ExtS}
\xymatrix{
0 \ar[r] & \sigma_{S}^*\Omega^1_{S}(\log(\Delta)) \ar@{=}[d] \ar[r] & \scr{E}_{S'}^\vee \ar@{=}[d] \ar[r] & \cO_{S'} \ar@{=}[d] \ar[r] & 0\\
0 \ar[r] & \Omega^1_{S'}(\log(B)) \ar[r] & \scr{E}_{S'}^\vee \ar[r] &  \cO_{S'} \ar[r] & 0.
}
\end{equation}
Moreover, then extension class of the exact sequence \eqref{eq-ExtS} is given by $\Phi(c_1(L))$ where $\Phi$ is the composition of the following natural homomorphism of cohomology groups ($\phi$ was given in \eqref{eq-injcoh}):
\begin{eqnarray*}
&&H^1(S, \cO_{S}^*)\stackrel{{\bf \delta}}{\longrightarrow} H^1(S, \Omega_{S}^1)\stackrel{\sigma_{S}^*}{\longrightarrow} H^1(S', \sigma_{S}^*\Omega_{S}^1)\\
&&\hskip 5cm \stackrel{\phi}{\longrightarrow} H^1(S', \sigma_{S}^*\Omega^1_{S}(\log(\Delta)))=H^1(S', \Omega^1_{S'}(\log(B))).
\end{eqnarray*}

\end{cor}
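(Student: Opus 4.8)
The plan is to obtain this statement as a direct specialization of Proposition \ref{prop-base2cone}. When $(S,\Delta)$ is already log smooth there is no need to modify the base: one takes the log resolution $\mu_S\colon(\tilde S,\tilde\Delta)\to(S,\Delta)$ to be the identity, so that $\tilde S=S$ and $\tilde\Delta=\Delta$, and the exceptional divisor of $\mu_X$ over the vertex is $E_x\cong S$ itself. Under this choice the two-row diagram \eqref{eq-CDcone} loses its middle column and collapses to the simpler diagram \eqref{eq-CDlogsmooth}, with $\sigma_{\tilde X}\colon\tilde Y\to\tilde X$ and $\sigma_S\colon S'\to S$ the Kawamata covers. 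So my first step is simply to record these identifications and to substitute $\tilde S,\tilde\Delta,\sigma_{\tilde S}$ by $S,\Delta,\sigma_S$ throughout the statement and conclusion of Proposition \ref{prop-base2cone}.

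The second step is to verify that the proof of Proposition \ref{prop-base2cone} applies verbatim. The essential observation is that this proof nowhere uses that $\tilde S$ is a \emph{resolution} of $S$: its only inputs are that $(\tilde S,\tilde\Delta)$ is an snc pair---here $(S,\Delta)$, which is log smooth by hypothesis---over which Kawamata's covering lemma furnishes the Galois cover with the stated local normal form, together with the fact that $\tilde X$ is the total space of the pulled-back ample line bundle with affine fibre coordinate $\xi$. With the notation change in place, the local change-of-basis computation for the frames $\{\hat D_i,\hat D_\xi\}$ and $\{\hat d_i,\hat d_\xi\}$, the resulting dual pair of short exact sequences on $\tilde Y$, their descent along $\pi_{S'}$ to \eqref{eq-ExtS}, and the identification of the \v{C}ech cocycle $c_{\alpha\beta}=\sigma_S^*\big(f_{\alpha\beta}^{-1}\frac{\partial f_{\alpha\beta}}{\partial z_i}\,dz_i\big)$ with $\phi\big(\sigma_S^*\delta(c_1(L))\big)$ all carry over unchanged. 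Thus both exact sequences and the formula $\Phi(c_1(L))$ for the extension class follow at once.

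Since this is a specialization rather than a fresh argument, there is no genuine obstacle; the only point requiring care is bookkeeping. In particular I would double-check that the divisor $B$ on $S'$ defined here (Notation \ref{nota-B}, with components $\Delta_i'$ and $H'$ and coefficients $1-N+N\delta_i$ and $1-N$) agrees with the restriction of the divisor $G$ of Definition \ref{defn-virXD} once the $E_x$-contribution has been absorbed into the zero-section class carried by $\xi$, so that $\Omega^1_{\tilde Y}(\log G)\otimes_{\cO_{\tilde Y}}\cO_{S'}=\scr{E}_{S'}^\vee$ is consistent with the earlier definition. For the reader's orientation I would also flag the special case $(S,\Delta)=(\bP^1,\sum_i\delta_i p_i)$, where the statement recovers the first part of \cite[Lemma 8.8]{Lan03}.
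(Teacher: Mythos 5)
Your proposal is correct and follows the paper's own route exactly: the paper presents this statement as an immediate corollary of Proposition \ref{prop-base2cone}, obtained by choosing the log resolution to be the identity so that $(\til{S},\til{\Delta})=(S,\Delta)$ and $E_x\cong S$, which is precisely your specialization argument. Your extra bookkeeping checks (consistency of $B$ with $G$, and the link to Langer's Lemma 8.8) match remarks the paper itself makes and add nothing beyond what is needed.
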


\subsubsection{Appendix: Orbifold structures on log pairs}\label{app-orbifold}

We follow \cite{GT16} (see also \cite{CKT16, Mum83}) to recall the definition of orbifold structures for general log pair. 

\begin{defn}[{\cite[Definition 2.3]{GT16}}]\label{defn-adapted}
Let $(S, \Delta=\sum_i\delta_i \Delta_i)$ be a log pair with $\delta_i=1-\frac{m_i}{n_i}$ where $m_i, n_i$ are integers satisfying $0\le m_i\le n_i$ and ${\rm gcd}(m_i, n_i)=1$.

A finite, Galois, flat and surjective morphism $f: S'\rightarrow (S, \Delta)$ is said to be adapted to $(S, \Delta)$ if the following conditions are satisfied:
\begin{enumerate}
\item
The variety $S'$ is a normal quasi-projective variety.
\item
$f^*\Delta_i$ is a Weil divisor, for every $i$. 
\item
The morphism $f$ is \'{e}tale at the generic point of ${\rm Supp}(\lfloor \Delta\rfloor)$.
\end{enumerate}

\end{defn}
\begin{defn}[{\cite[Definition 2.5]{GT16}}]
We say that a pair $(S, \Delta)$ has an orbifold structure at $x\in S$ if there is a Zariski open neighborbood $U_x\subset S$ of $x$ equipped with a morphism $f_x: V_x\rightarrow U_x$ adapted 
to $\left.(S, \Delta)\right|_{U_x}$. Furthermore, if $U_x$ is smooth and ${\rm Supp}(f_x^*(\Delta))$ is simple normal crossing, we say that the orbifold structure defined by $(U_x, f_x, V_x)$ is smooth.
\end{defn}

\begin{defn}[{\cite[Definition 2.7]{GT16}}]
Let $\cC=\{(U_\alpha, f_\alpha, S_\alpha)\}_{\alpha\in I}$ be a collection of ordered triples describing local orbifold structures on $S$. Let $\alpha,\beta\in I$ and define $S_{\alpha\beta}$
be the normalization of the fiber product $(S_\alpha\times_{U_\alpha\cap U_\beta}S_\beta)$ with the natural projection $g_{\alpha\beta}: S_{\alpha\beta}\rightarrow S_\beta$ and $g_{\beta\alpha}: 
S_{\alpha\beta}\rightarrow S_\beta$. We say that $\cC$ defines an orbifold structure on $S$ if $\bigcup_{\alpha\in I}U_\alpha=S$ and for each $\alpha,\beta\in I$, the two morphism $g_{\alpha\beta}$
and $g_{\beta\alpha}$ are \'{e}tale.
\end{defn}
Most constructions in standard algebraic geometry can be extended to the orbifold setting. These include the definitions of coherent orbifold (sub-)sheaves, Chern classes of orbifold sheaves, slope (semi-, poly-)stability of orbifold sheaves. Moreover, one can define orbifold tangent sheaf (resp. orbifold cotangent sheaf) for a given orbifold structure, which is denoted by $T^1_S(-\log\Delta)$ (resp. $\Omega^1_S(\log\Delta)$). For our limited purpose, we just need the log smooth case.

\begin{exmp}[{\cite[Example 2.8]{GT16}}]
Let $(S, \Delta)$ be a log smooth pair. There is a {\it canonical} orbifold structure defined as follows. For any $x\in S$, let $U_x$ be a 
Zariski neighborhood of $x$ where $\Delta_i|_{U_x}$ is given by the zero set of $f_i\in \cO_{U_x}$. Let $\{t_i\}^{k}_{i=1}$ parametrize each copy of $\bC$ in the produce $\bC^k\times U_x$. Then the subvariety $V_x\subset \bC^k\times U_x$ defined by the zero of $\{(t_i^{n_i}-f_i)\}^{k}_{i=1}$ admits a projection $\sigma_x$ onto $U_x$. The collection $\cC:=\{(U_x, \sigma_x, V_x)\}$ defines a {\it
smooth orbifold structure}. 


\end{exmp}

The following basic facts can be deduced from \cite[\S 3]{Mum83} and \cite{GT16}. For the definition of compatible orbifold sheaves, see \cite[Definition 3.1]{GT16}.

\begin{prop}[{see \cite[Lemma 3.5]{Mum83} and \cite[Proposition 3.3]{GT16}}]\label{prop-sameChern}
Let $(S, \Delta)$ be a log-smooth pair. Then the classes $c_1^2$ and $c_2$, as multilinear forms on $N^1(S)^{n-1}_{\bQ}$ (resp. $N^1(S)^{n-2}_{\bQ}$) are well-defined. Moreover, they are functorial under adapted morphisms.
\end{prop}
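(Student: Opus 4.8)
The plan is to express each orbifold Chern number as a degree-normalized \emph{honest} Chern number on a smooth space dominating $S$, and then to prove that the answer is intrinsic to $(S,\Delta)$. Recall that $(S,\Delta)$ carries its canonical smooth orbifold structure $\cC=\{(U_\alpha,f_\alpha,S_\alpha)\}$, with the $S_\alpha$ smooth and the $f_\alpha$ finite Galois, and that the orbifold cotangent sheaf $\Omega^1_S(\log\Delta)$ is by definition the compatible collection of genuine locally free sheaves $\{\Omega^1_{S_\alpha}\}$. For divisor classes $\gamma_1,\dots,\gamma_{n-i}\in N^1(S)_\bQ$, the orbifold number $c_i(S,\Delta)\cdot\gamma_1\cdots\gamma_{n-i}$ is defined, as in Mumford's treatment of $\bQ$-varieties, by computing $\tfrac{1}{\deg f_\alpha}\,c_i(\Omega^1_{S_\alpha})\cdot f_\alpha^*\gamma_1\cdots f_\alpha^*\gamma_{n-i}$ on each chart and gluing; the mixed form $c_1^2\cdot(\cdots)$ is treated identically. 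The factor $1/\deg f_\alpha$ compensates for the $\deg f_\alpha$-fold sheeting of the chart.

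Well-definedness is the heart of the matter. I would prove that the local numbers glue, and more generally that any adapted cover computes the same number, by dominating two competing covers by the normalization $W$ of their fibre product and applying two standard facts: Chern classes commute with flat (in particular finite-flat or \'{e}tale) pullback, and the projection formula $\int_W h^*(\,\cdot\,)=\deg(h)\int(\,\cdot\,)$ holds for finite flat $h$. The crucial input is the very definition of an orbifold structure, in which the comparison maps $g_{\alpha\beta},g_{\beta\alpha}$ over each overlap are required to be \emph{\'{e}tale}: this is exactly what forces the local sheaves $\{\Omega^1_{S_\alpha}\}$ to pull back to one and the same locally free sheaf on $W$, so that $c_i$ computed from either cover agrees on $W$ and the two degree-normalized numbers coincide. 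This simultaneously yields the functoriality assertion: for an adapted morphism $f\colon S'\to(S,\Delta)$ in the sense of Definition \ref{defn-adapted}, pulling the charts back along $f$ gives an orbifold structure on $S'$ whose cotangent sheaf is the $f$-pullback of $\Omega^1_S(\log\Delta)$, whence $c_i(S',\cdot)\cdot f^*\gamma_1\cdots=\deg(f)\cdot c_i(S,\Delta)\cdot\gamma_1\cdots$.

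I expect the main obstacle to be this independence-of-choices bookkeeping rather than any individual computation. Two points need care. First, one must check that the \'{e}tale overlap condition genuinely produces a well-defined locally free representative on the refinement $W$, and hence a single cohomology class, so that the gluing of the local cycles is unambiguous. Second, and most delicately, one should verify that a convenient global cover such as the Kawamata cover $\sigma\colon S'\to S$ of Section 2 --- on which $\Omega^1_S(\log\Delta)$ is represented by $\Omega^1_{S'}(\log B)$ of Definition \ref{defn-virSDel}, but which also ramifies along the auxiliary ample divisor $H$ --- returns the same normalized number despite that extra ramification. This works because along $H'$ the distinguished generators of $\Omega^1_{S'}(\log B)$ carry no logarithmic pole (there $w^{-d_i}dw=\tfrac1N\sigma^*dz$), so $H$ is invisible to the orbifold sheaf; combined with the fact that we only ever pair against classes $\gamma_j$ pulled back from $S$, the projection formula removes all dependence on $H$ and on $\deg\sigma$. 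A class living only on the cover would, by contrast, detect the choice.
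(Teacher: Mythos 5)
The paper gives no proof of Proposition \ref{prop-sameChern} at all --- it is quoted directly from \cite{Mum83} and \cite{GT16} --- and your argument is essentially the standard one underlying those citations: degree-normalized Chern numbers on adapted charts, independence of the chart via the normalization of the fibre product together with the required \'etale comparison maps, flat pullback of Chern classes, and the projection formula; your observation that the extra ramification along the auxiliary ample divisor $H$ is invisible to $\Omega^1_{S'}(\log B)$ (since there $w^{-d_i}dw_i$ carries no pole) is exactly the reason the global Kawamata cover $\sigma_S$ computes the intrinsic numbers. The only phrasing to tighten is that intersection numbers are not literally ``computed on each chart and glued''; rather the local Chern classes glue, \`a la Mumford, to a class in the rational Chow group of $S$ which is then paired globally with the $\gamma_j$ --- a point you effectively concede when you speak of the unambiguous gluing of the local cycles.
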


\begin{prop}[{\cite[3.1]{GT16}}]\label{prop-compatible}
Let $(S, \Delta)$ be a log-smooth pair. We can choose $\sigma_S$ in the previous subsection such that the orbifold structure defined $\sigma_S$ is compatible with the canonical orbifold structure, and the orbifold tangent sheaf of $(S, \Delta)$ with respect to $\sigma_S$ is compatible with its canonical orbifold tangent sheaf. As a consequence, $T_{S}(-\log(\Delta))$ is semistable with respect to 
$-(K_S+\Delta)$ if and only if $\sigma_{S}^*T_{S}(-\log(\Delta))=T_{S'}(-\log(B))$ is semistable with respect to $\sigma_{S}^*(-(K_S+\Delta))$.
\end{prop}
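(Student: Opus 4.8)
The plan is to reduce both assertions of the proposition to the general orbifold formalism of \cite{Mum83} and \cite{GT16} after checking that the covering $\sigma_S$ built in the previous subsection meets their hypotheses. So the statement is really a compatibility check together with a standard descent argument for semistability under finite Galois covers.

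First I would arrange the compatibility. Writing $\delta_i=1-\frac{m_i}{n_i}$ with $\gcd(m_i,n_i)=1$, the canonical orbifold structure on the log-smooth pair $(S,\Delta)$ is given locally by adjoining $n_i$-th roots of the defining equations of $\Delta_i$. Since Kawamata's covering lemma allows one to prescribe the ramification, I would choose the degree-$N$ Galois cover $\sigma_S$ so that $N$ is divisible by every $n_i$ and the ramification index along each $\Delta_i$ is a multiple of $n_i$. One then checks that the normalized fiber product of a local canonical chart with $S'$ is \'etale over each factor, which is precisely the compatibility condition of \cite[Definitions 2.7 and 3.1]{GT16}. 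With this choice, a local computation with the root-adapted generators $w_i^{d_i}\tfrac{\partial}{\partial w_i}$, where $d_i=1-N+N\delta_i=1-N m_i/n_i$, identifies the sheaf $T_{S'}(-\log B)$ of Definition \ref{defn-virSDel} with the pullback of the canonical orbifold tangent sheaf; that is, it is a compatible orbifold sheaf. Proposition \ref{prop-sameChern} (i.e.\ \cite[Lemma 3.5]{Mum83}, \cite[Proposition 3.3]{GT16}) then shows its Chern numbers agree with the orbifold Chern numbers of $T_S(-\log\Delta)$ and are functorial under $\sigma_S$.

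For the semistability equivalence I would use that pullback along a finite Galois cover preserves slope semistability. By the functoriality in Proposition \ref{prop-sameChern}, for any orbifold subsheaf $\scr{F}\subset T_S(-\log\Delta)$ one has $\deg_{\sigma_S^*L}(\sigma_S^*\scr{F})=(\deg\sigma_S)\,\deg_L(\scr{F})$ with rank unchanged, so all slopes scale by the single positive factor $\deg\sigma_S$ and every destabilizing inequality is preserved in both directions. The forward implication is then immediate: an orbifold subsheaf destabilizing $T_S(-\log\Delta)$ pulls back to a subsheaf destabilizing $T_{S'}(-\log B)$. For the converse I would argue by contraposition, using that if $T_{S'}(-\log B)$ is not semistable then its maximal destabilizing (first Harder--Narasimhan) subsheaf is unique, hence invariant under the Galois group $G$, and therefore descends to an orbifold subsheaf of $T_S(-\log\Delta)$ violating orbifold semistability.

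The hard part will be the backward descent in the last step: one must know that a $G$-invariant destabilizing subsheaf upstairs genuinely descends to an \emph{orbifold} subsheaf of $T_S(-\log\Delta)$, and that its orbifold slope is computed by the functorial Chern classes of Proposition \ref{prop-sameChern} rather than by some naive reflexive hull on $S$. This is exactly the content already packaged in \cite[\S 3]{GT16} and \cite[\S 3]{Mum83}, so once the compatibility of our specific $\sigma_S$ is confirmed, the equivalence follows formally; the genuine work is thus the bookkeeping in the first two steps rather than any new estimate.
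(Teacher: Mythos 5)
The paper does not actually prove this proposition --- it is quoted from \cite[3.1]{GT16} and used as a black box --- so there is no in-paper argument to compare against line by line; I can only assess your reconstruction on its merits. Your overall strategy (first arrange compatibility of the two orbifold descriptions, then descend semistability using uniqueness of the maximal destabilizing subsheaf) is the standard and correct one, and the second half of your argument is fine: slopes scale by the single factor $\deg\sigma_S$ by the functoriality in Proposition \ref{prop-sameChern}, the forward implication is immediate, and the $G$-invariance of the Harder--Narasimhan maximal destabilizer (the Galois group preserves the polarization $\sigma_S^*(-(K_S+\Delta))$) gives the converse.

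The concrete problem is in your compatibility step. You take the ramification index along $\Delta_i$ to be a \emph{multiple} of $n_i$ and then assert that the normalized fiber product of a canonical chart (obtained by adjoining $n_i$-th roots of the $f_i$) with $S'$ is \'etale over each factor. That fails unless the ramification index is exactly $n_i$: locally the fiber product is $\{t_i^{n_i}=w_i^{N}\}$ with $n_i\mid N$, whose normalization is a disjoint union of branches $\{t_i=\zeta w_i^{N/n_i}\}$; the projection to the $w$-chart is \'etale, but the projection to the canonical chart is ramified of order $N/n_i$ along $t_i=0$. So with your stated choice the two orbifold \emph{structures} are not compatible in the sense of \cite[Definition 2.7]{GT16}. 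There are two standard repairs, both consistent with what the paper uses: (i) invoke the refined form of Kawamata's covering lemma to prescribe the ramification order to be exactly $n_i$ along $\Delta_i$ --- this is precisely what Remark \ref{rem-compatible} is pointing at (equal ramification orders give compatible cotangent sheaves); or (ii) keep your cover but claim only compatibility of the orbifold \emph{sheaves} in the sense of \cite[Definition 3.1]{GT16}, which your local computation does establish (the generators $t_i^{m_i-1}dt_i$ and $w_i^{Nm_i/n_i-1}dw_i$ pull back to the same monomial form on the common refinement) and which is all that Proposition \ref{prop-sameChern} and the slope comparison actually require. As written, though, the \'etaleness claim is a step that would fail and should be replaced by one of these two fixes.
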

\begin{rem}\label{rem-compatible}
Behrouz Taji pointed out to me that, given two orbifold stuctures on a fixed pair $(S, \Delta)$, if we have the same ramification order along $\Delta$ then the corresponding orbifold-cotangent sheaves are compatible (see \cite[Proof of Theorem C]{GT16}).
\end{rem}





\section{Generalizations of Tian's semistability result}\label{sec-logTian}
\subsection{log smooth case}
\begin{thm}\label{thm-poly}
Assume that the log smooth Fano pair $(S, \Delta)$ is K-polystable. Then the orbifold tangent sheaf $T_S(-\log\Delta)$ is semistable with respect to $-(K_S+\Delta)$. 

Moreover, let $\scr{E}$ be the extension of the orbifold tangent sheaf $T_{S}(-\log \Delta)$ by $\cO_{S}$ with the extension class $\lambda \cdot c_1(-(K_S+\Delta))$ and $\lambda\in \bQ_{>0}$. Then $\scr{E}$ is slope semistable. 
\end{thm}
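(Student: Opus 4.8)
The plan is to deduce both statements from the existence of a (conical) K\"ahler-Einstein metric on $(S,\Delta)$ together with an orbifold version of the Hitchin-Kobayashi correspondence, following the circle of ideas in \cite{Tia92, CP16, GT16}. Since $(S,\Delta)$ is a K-polystable log-smooth Fano pair, the solution of the Yau-Tian-Donaldson conjecture in the log setting provides a weak conical K\"ahler-Einstein metric $\omega_{\KE}$ with cone angle $2\pi(1-\delta_i)$ along each $\Delta_i$, normalized so that $[\omega_{\KE}]=2\pi c_1(-(K_S+\Delta))$ and $\mathrm{Ric}(\omega_{\KE})=\omega_{\KE}$ on $S\setminus \mathrm{Supp}(\Delta)$. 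Pulling $\omega_{\KE}$ back to the adapted cover $\sigma_S\colon S'\to S$ of Proposition \ref{prop-compatible}, the cone singularities are resolved into a genuine orbifold metric, so that $\omega_{\KE}$ induces a smooth orbifold Hermitian metric on $\sigma_S^*T_S(-\log\Delta)=T_{S'}(-\log(B))$.

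First I would treat the orbifold tangent sheaf. The Einstein equation $\mathrm{Ric}(\omega_{\KE})=\omega_{\KE}$ says precisely that the Chern curvature of the induced metric on $T_S(-\log\Delta)$ satisfies the Hermitian-Einstein condition $\Lambda_{\omega_{\KE}}F=\mu\cdot\mathrm{Id}$, with $\mu$ the constant dictated by the slope $\frac{(-(K_S+\Delta))^n}{n}$ of $T_S(-\log\Delta)$ with respect to $-(K_S+\Delta)$. A Hermitian-Einstein orbifold bundle is slope polystable, hence slope semistable; by the compatibility in Proposition \ref{prop-compatible} this descends to the semistability of $T_S(-\log\Delta)$ itself.

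For the extension sheaf I would adapt Tian's construction from \cite{Tia92}. The extension class $\lambda\, c_1(-(K_S+\Delta))\in H^1(S,\Omega^1_S(\log\Delta))$ has, with respect to $\omega_{\KE}$, a distinguished harmonic representative proportional to $\omega_{\KE}$ itself, viewed as a $(0,1)$-form valued in $\Omega^1_S(\log\Delta)$. Using this representative as the second fundamental form $\beta$, I would equip the $C^\infty$-splitting $\scr{E}\cong \cO_S\oplus T_S(-\log\Delta)$ with the direct-sum Hermitian metric $h_{\scr{E}}=h_0\oplus h_T$, where $h_0$ is the flat metric and $h_T$ is induced by $\omega_{\KE}$. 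A block computation of the Chern curvature of $(\scr{E},h_{\scr{E}})$, in which the off-diagonal terms are governed by $\beta\sim\omega_{\KE}$ and the diagonal corrections are the $\beta\wedge\beta^*$ and $\beta^*\wedge\beta$ terms, shows that the Einstein equation for $\omega_{\KE}$ forces $\Lambda_{\omega_{\KE}}F_{\scr{E}}=c\cdot\mathrm{Id}$ for the constant $c$ prescribed by the slope $\frac{(-(K_S+\Delta))^n}{n+1}$ of $\scr{E}$; here the precise value of $\lambda$ only rescales $\beta$ and is absorbed into the normalization. Thus $(\scr{E},h_{\scr{E}})$ is orbifold Hermitian-Einstein, and the Hitchin-Kobayashi correspondence yields slope polystability, in particular slope semistability, of $\scr{E}$ with respect to $-(K_S+\Delta)$.

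The hard part will be the conical/orbifold analysis near $\mathrm{Supp}(\Delta)$: the metric $\omega_{\KE}$ is singular there and its curvature is unbounded, so one must justify (i) that the constructed metric is Hermitian-Einstein in the correct orbifold sense across the cone locus, and (ii) that the Chern-Weil integrals computing the degree of any saturated destabilizing subsheaf are finite and recover the correct algebraic slopes. Following \cite{CP16, GT16}, I would handle this by working throughout on the cover $S'$ (or on a smooth orbifold model), where $\omega_{\KE}$ pulls back to a metric with bounded local geometry, and by controlling the second fundamental form and the boundary contributions so that the standard argument --- the slope of a coherent subsheaf is bounded by the integral of the mean curvature against its first Chern form --- applies verbatim. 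This is precisely the step where the log-smoothness hypothesis and the adapted-cover formalism of Section \ref{sec-cone} are essential.
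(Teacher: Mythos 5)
Your proposal follows essentially the same route as the paper: produce the conical K\"ahler--Einstein metric from the log YTD theorem, pass to an adapted cover $\sigma_S\colon S'\to S$ compatible with the canonical orbifold structure, build the Hermitian metric on the extension out of the direct sum $h_0\oplus h_T$ with second fundamental form proportional to $\omega_{\rm KE}$, and check by the block curvature computation that the normalization of $\lambda$ makes the metric Hermitian--Einstein; this is exactly the paper's Steps 1, 3 and 4. One caveat: your assertion that pulling back to $S'$ resolves the cone singularities into a \emph{smooth} orbifold metric is false for general coefficients --- in adapted coordinates the pullback is quasi-isometric to $\sum_k |w_k|^{-2d_k}\sqrt{-1}\,dw_k\wedge d\bar w_k$ with $d_k=1-N+N\delta_k\le 0$, which degenerates along $B$ unless $\delta_k=(N-1)/N$ --- so the orbifold Hitchin--Kobayashi correspondence cannot be invoked as a black box to get polystability. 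The paper sidesteps this by running only the easy direction of Hitchin--Kobayashi by hand: for a subsheaf $\scr{F}$ it takes the induced bounded section $u$ of $\wedge^r\scr{E}\otimes\scr{L}^{-1}$, applies the Lelong--Poincar\'e-type inequality for $\log(|u|^2+\tau^2)$, and integrates against cut-off functions $\chi_\epsilon$ \`a la Campana--Guenancia--P\u{a}un to recover the slope inequality; this yields semistability, which is all the theorem claims. Your final paragraph describes exactly this fallback, so the proposal is sound for the stated result, but the polystability claim should be dropped or deferred (as the paper itself does in the remark following the proof).
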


\begin{proof}
We will carry out the proof in several steps.

\noindent
{\bf Step 1:}
We carry out the construction in section \ref{sec-cone} by choosing $(\til{S}, \til{\Delta})=(S, \Delta)$ and a ramified covering $\sigma_S: S'\rightarrow S$ such that the orbifold structure defined by $\sigma_S$ is compatible with the canonical orbifold structure of the log smooth pair $(S, \Delta)$. Consider the pull back of the orbifold tangent sheaf with respect to $\sigma_S$, denoted by $\sigma_S^*T_S(-\log(\Delta))$ or by $T_{S'}(-\log(B))$, as in Definition \ref{defn-virSDel}. 
By Proposition \ref{prop-compatible}, we just need to show that the sheaf $\sigma_S^*T_S(-\log\Delta)$ is semistable with respect to $\sigma_S^*(-(K_S+\Delta))$. 

By the Yau-Tian-Donaldson conjecture for log smooth Fano pair proved in \cite{LTW17, TW18} \footnote{since we will be using approximation approach to deal with K-semistability in step 3, we just need the version involving uniform K-stability in \cite{LTW17}}, we know that there is a K\"{a}hler-Einstein metric $\omega$ on $(S, \Delta)$ in the sense that
\begin{enumerate}
\item $\omega$ satisfies the following equation:
\begin{equation}
Ric(\omega)=\omega+\sum_i \delta_i \{\Delta_i\}.
\end{equation}
\item
$\omega$ is smooth on $S\setminus {\rm Supp}(\Delta)$ and is quasi-isometric to the following model metric near $\Delta$:
\[
\sum_{k=p+1}^{n} \frac{\sqrt{-1}dz_k\wedge d\bar{z}_k}{|z_k|^{2\delta_i}}+\sum_{k=1}^p\sqrt{-1} dz_k\wedge d\bar{z}_k.
\]
\end{enumerate}
Pulling back $\omega$ by $\sigma_S: S'\rightarrow S$, we get a positive current $\omega'$, satisfying:
\begin{enumerate}
\item Outside ${\rm Supp}(B)$, $Ric(\omega')=\omega'$. Here $B=\sum_i (1-N+N\delta_i)\Delta'_i+(1-N)H'$ (see Notation \ref{nota-B})

\item $\omega'$ is smooth outside ${\rm Supp}(\Delta'+H')={\rm Supp}(B)$, and near ${\rm Supp}(B)$, $\omega'$ is quasi-isometric to the following model metric:
\begin{equation}\label{eq-modelsmooth}
\sum_{k=p+1}^n |w_k|^{-2d_k}\sqrt{-1} dw_k\wedge d\bar{w}_k+\sum_{k=1}^{p}\sqrt{-1} dw_k\wedge d\bar{w}_k,
\end{equation}
where $d_k=1-N+N\delta_k$ or $1-N$. Note that $d_k\le 0$ always holds.

\end{enumerate}
\bigskip

\noindent
{\bf Step 2: }
We use similar argument as \cite[pp. 22-23]{GT16}.
Let $\scr{F}$ be any coherent sheaf of $T_{S'}(-\log(B))(=\sigma_S^*T_{S}(-\log(\Delta))$ with rank $r=\rk(\scr{F})$.  Let $\scr{L}=(\wedge^r\scr{F})^{**}$. Then we get a holomorphic section $u$ of $\wedge^r(T_{S'}(-\log B))\otimes \scr{L}^{-1}$. Fix a smooth Hermitian metric $h_{\scr{L}}$ on $\scr{L}$. The metric $\omega':=\sigma_S^*\omega$ induces a Hermitian metric $h'$ on $\wedge^r(T_{S'}(-\log B))$. Because $\omega'$ is quasi-isometric to the model metric \eqref{eq-modelsmooth}, by using the local generator of $T_{S'}(-\log B)$ in Definition \ref{defn-virSDel}, it's easy to see that the metric $h'$ is bounded. Denote $|u|^2=|u|^2_{h'\otimes h_{\scr{L}}^{-1}}$. Then $|u|^2$ is a bounded function on $S'$ which is smooth on $S'\setminus B$.

To proceed, we need the following easy lemma.
 \begin{lem}
Let $E$ be a holomorphic vector bundle over a complex manifold $M$ with a smooth Hermitian metric $h$ and $u$ a holomorphic section of $E$. 
Let $F=F(\cdot)$ be a smooth concave function on $(0,+\infty)$ (i.e. $F''\le 0$), then we have the following inequality:
\begin{equation}\label{eq-LelongF}
\sddb F(|u|^2)\ge -F'(t)(R^E u,u)_h+(F''(t)t+F'(t))|\nabla u|^2,
\end{equation}
where $t=|u|_h^2$ and $R^E$ is the Chern curvature of $(E, h)$.

\end{lem}
\begin{proof}
We first claim the following holds. 
 For any $p\in M$, we can choose holomorphic coordinate chart $\{U_p, z_i\}$ centered at $p$ (i.e. $z_i(p)=0$ for all i) and holomorphic frames $\{s_\alpha\}_{1\le \alpha\le \rk(E)}$ over $\til{U}_p$ 
such that $h_{\alpha\bar{\beta}}=(s_\alpha, s_\beta)_{h}$ satisfies:
\[
h_{\alpha\bar{\beta}}(p)=\delta_{\alpha\beta}, \text{ and } \partial h_{\alpha\bar{\beta}} (p)=0.
\]
To see this, we first choose any holomorphic frame $\{\til{s}_\alpha\}$ of $E$ over a coordinate neighborhood $(\til{U}_p, \{z_i\})$ of $p$ such that the Hermitian metric $\til{h}_{\alpha\beta}=(\til{s}_\alpha, \til{s}_\beta)_h$ satisfies $\til{h}_{\alpha\bar{\beta}}(p)=\delta_{\alpha\beta}$.
Choose $s_\alpha=(\delta_{\alpha\beta}-\sum_i (\partial_{z_i}h_{\alpha\bar{\beta}}(p))z_i)\til{s}_\beta$. Then it's easy to verify that there exists $U_p\subset \til{U}_p$ such that $\{s_\alpha\}$ are holomorphic frames of $E$ over $U_p$ and satisfie the requirement.

Let $u=u_\alpha s_\alpha$ with $u_\alpha$ holomorphic over $U_p$. Then we can easily calculate that
$(\partial\bar{\partial}h_{\alpha\bar{\beta}})(p)=-(R^E s_\alpha, s_\beta)_h(p)$ and $\bar{\partial} |u|^2(p)=\left(u_\alpha \bar{\partial} \bar{u}_\alpha\right)(p)$ and
\begin{eqnarray*}
\partial\bar{\partial}|u|^2(p)&=&\left[(\partial u_\alpha)(\bar{\partial} \bar{u}_\alpha)+u_\alpha \bar{u}_\beta \partial\bar{\partial}h_{\alpha\bar{\beta}}\right](p)\\
&=&(\partial u_\alpha)(\bar{\partial}\bar{u}_\alpha)(p)-(R^E u, u)(p).
\end{eqnarray*}
Substituting these expression into $\partial\bar{\partial}F(|u|^2)$ and using Cauchy-Schwarz inequality, we easily get the inequality \eqref{eq-LelongF} since $p$ is arbitrary. 
\end{proof}
Applying the above lemma to $(M, E, h)=(S'\setminus B, \wedge^r(T_S')\otimes \scr{L}^{-1}, h')$ and $F(t)=\log(t+\tau^2)$ where $\tau>0$ is a constant we get the inequality
\begin{eqnarray}\label{eq-vectLelong}
\sddb\log(|u|^2+\tau^2)\ge \frac{|u|^2}{|u|^2+\tau^2}\left(R^{\scr{L}}-\frac{(R^{\wedge^rT_{S'}}u,u)_{h'\otimes h_{\scr{L}}^{-1}}}{|u|_{h'\otimes h_{\scr{L}}^{-1}}^2}\right),
\end{eqnarray}
where $R^{\scr{L}}$ is the Chern curvature of $(\scr{L}, h_{\scr{L}})$ and $R^{\wedge^r T_{S'}}$ is the Chern curvature of the Hermitian metric on $\wedge^r T_{S'}$ induced by $h'$.
In other words, for any $v=\frac{\partial}{\partial w_{m_1}}\wedge \cdots \wedge \frac{\partial}{\partial w_{m_r}}\in \wedge^r T_{S'}$, we have:
\begin{eqnarray*}
R^{\wedge^r T_{S'}}(v)&=&
R^{\wedge^r T_{S'}}\left(\frac{\partial}{\partial w_{m_1}}\wedge\cdots\wedge \frac{\partial}{\partial w_{m_r}}\right)\\
&=&\sum_{\alpha=1}^r \frac{\partial}{\partial w_{m_1}}\wedge \cdots \wedge \left(R^{T_{S'}}\frac{\partial}{\partial w_{m_\alpha}}\right) \wedge \cdots\wedge \frac{\partial}{\partial w_{m_r}}\\
&=:&(R^{T_{S'}})^{\wedge r}(v).
\end{eqnarray*}
Here $R^{T'_S}_{i\bar{j}}=R_{i\bar{j}k}^{'\;\;\;\;l}dw_k\otimes \frac{\partial}{\partial w_l}$ is the Riemannian tensor of the K\"{a}hler metric $\omega'$ on $S'\setminus {\rm Supp} B$ and so $g'^{i\bar{j}}R^{T_{S'}}_{i\bar{j}}=Ric(\omega')_{k}^l dw_k\otimes \frac{\partial}{\partial w_l}$.
As a consequence, 
\begin{eqnarray*}
\tr_{\omega'}\left(R^{\wedge^r T_{S'}}\right)&=&
g'^{i\bar{j}}R^{\wedge^r T_{S'}}_{i\bar{j}}=({\rm id}_{T_{S'}})^{\wedge r}
=r \cdot {\rm id}_{\wedge^r T_{S'}}.
\end{eqnarray*}
As in \cite[9]{CGP13}, \cite[p.2363]{CP16}, we can choose a family of cut-off function $\{\chi_\epsilon\}_{\epsilon>0}$ such that the $L^1$-norm of $\sddb \chi_\epsilon$ with respect to a smooth metric on $S'$ goes to zero as $\epsilon\rightarrow 0$. 
Wedging both sides of \eqref{eq-vectLelong} by $\chi_{\epsilon}\omega'^{n-1}_t$ and integrating on $S'$, we get by integration by parts:
\begin{equation}\label{eq-LePo}
-\int_{S'}\log(|u|^2+\tau^2)\sddb \chi_{\epsilon}\wedge \omega'^{n-1}\ge \int_{S'}\frac{|u|^2\chi_\epsilon}{|u|^2+\tau^2}\left(R^{\scr{L}}-\frac{(R^{\wedge^rT_S'}u, u)}{|u|^2}\right)\wedge \omega^{n-1}.
\end{equation}
Because $d_k\le 0$ and $|u|^2$ is bounded, it's easy to see that the left-hand-side goes to $0$ as $\epsilon\rightarrow 0$. 
The right-hand-side splits into two parts whose limits as $(\epsilon,\tau)\rightarrow (0,0)$ are given by (see \cite[p. 24]{GT16}):
\begin{eqnarray*}
I_1&=&\int_{S'}\frac{|u|^2\chi_\epsilon}{|u|^2+\tau^2}R^{\scr{L}}\wedge \omega'^{n-1} \xrightarrow{(\epsilon,\tau)\rightarrow (0,0)} c_1(\scr{L})\wedge [\omega']^{n-1}=\deg(\scr{F}) \\
I_2&=&-\int_{S'}\frac{|u|^2\chi_\epsilon}{|u|^2+\tau^2}\frac{1}{n}\frac{r |u|^2}{|u|^2}\omega'^n  \xrightarrow{(\epsilon,\tau)\rightarrow (0,0)}  - \frac{r}{n} [\omega']^n=-\frac{r }{n}\deg(T_{S'}(-\log(B))).
\end{eqnarray*}
So we get the wanted inequality:
\begin{eqnarray*}
\deg(\scr{F})\le \frac{\rk(\scr{F})}{n}\deg(T_{S'}(-\log(B))).
\end{eqnarray*}
\bigskip

\noindent
{\bf Step 3:} {\it Digression on extension of vector bundles}
\medskip

Let $E_1$ and $E_2$ be two holomorphic bundles over $S'$. Let $\psi\in \cA^{0,1}(End(E_1, E_2))$ be a $\bar{\partial}$-closed $\Hom(E_1,E_2)$-valued $(0,1)$-form. Then $\psi$ defines cohomology class $[\psi]$ in $H^{0,1}_{\bar{\partial}}(S', \cA(E_1^*\otimes E_2))\cong H^1(S', E_1^*\otimes E_2)$ which determines an extension, denoted by $\scr{E}:=\scr{E}([\psi])$, of $E_1$ by $E_2$:
\begin{equation}\label{eq-extension}
0\longrightarrow E_2\longrightarrow \scr{E}\longrightarrow E_1\longrightarrow 0.
\end{equation}
Choose Hermitian metrics $h_1$ on $E_1$ and $h_2$ on $E_2$. Denote by $D_1$ and $D_2$ the unique Chern connections associated to $h_1$ and $h_2$. Then the $(0,1)$-part of $D_1$ and $D_2$  give holomorphic structure on $E_1$ and $E_2$. Define a $\Hom(E_2, E_1)$-valued $(1,0)$-form $\bar{\psi}$ by:
\begin{equation}\label{eq-bvphi}
h_2(\psi(v), w)-h_1(v, \bar{\psi}(w))=0 \text{ for any } v\in E_1, w\in E_2.
\end{equation}
Consider the Hermitian metric on the complex vector bundle $E_1\oplus E_2$ given by $h:=h_1\oplus h_2$. Then the Chern connection associated to $h$ on the holomorphic vector bundle $\scr{E}$ is given by the following expression, whose 
$(0,1)$-part gives the holomorphic structure of $\scr{E}$:
\begin{equation}
D=
\left(\begin{array}{cc}
D_1& -\bar{\psi}\\
\psi & D_2
\end{array}
\right), \quad
D^{0,1}=
\left(\begin{array}{cc}
\bar{\partial}^{E_1}& 0\\
\psi & \bar{\partial}^{E_2}
\end{array}
\right).
\end{equation}

The extension class of the exact sequence \eqref{eq-extension} can also be given by the \v{C}ech cohomology as used in the proof of Proposition \ref{prop-base2cone}. We now explain how the extension sheaf $\scr{E}$ determines a holomorphic co-cycles $\phi_{\alpha\beta}\in 
End(E_1, E_2)(U_\alpha\cap U_\beta)$ which determines the extension class in $H^1(S', E_1^*\otimes E_2)$. 
First note that as complex vector bundles (without considering the holomorphic structure), $\scr{E}$ is isomorphic to $E_1\oplus E_2$.
If $v_\alpha=\{v_{\alpha,i}\}$ and $w_{\alpha}=\{w_{\alpha,r}\}$ are local holomorphic frames of $E_1$ and $E_2$ respectively, then 
we can assume that the holomorphic frames of $\cE$ are given by $\{v'_\alpha, w_\alpha\}$ such that:
\begin{eqnarray*}
\left
(v'_\alpha,  w_\alpha
\right)
=\left(
v_\alpha, w_\alpha
\right)
\left(\begin{array}{cc}
I_{E_1}& 0 \\
\til{\zeta}_\alpha & I_{E_2}
\end{array}
\right)
\end{eqnarray*}
where $\til{\zeta}_\alpha=((\til{\zeta})_i^r)$ is a $\rk(E_1)\times \rk(E_2)$ matrix-valued function which determines a homomorphism $E_1\rightarrow E_2$: $\zeta_\alpha(v_{\alpha,i})=(\til{\zeta}_\alpha)_i^r w_{\alpha,r}$. Moreover, because $v'_\alpha, w_\alpha$ are holomorphic frames, we see that the holomorphic structure of $\scr{E}$ is given by:
\begin{eqnarray*}
\bar{\partial}^{\scr{E}}=
\left(\begin{array}{cc}
\bar{\partial}^{E_1}& 0\\
\bar{\partial} \til{\zeta}_\alpha & \bar{\partial}^{E_2}
\end{array}
\right)
=:
\left(\begin{array}{cc}
\bar{\partial}^{E_1}& 0\\
\til{\psi}_\alpha & \bar{\partial}^{E_2}
\end{array}
\right)
\end{eqnarray*}

If $M^{E_i}_{\alpha\beta}$ are transition matrices between holomorphic frames of $E_i$ over $U_\alpha\cap U_\beta$, then the transition matrix between holomorphic frames of $\scr{E}$, defined by:
$(v'_{\beta}, w_{\beta})= (v'_\alpha, w_\alpha) M^E_{\alpha\beta}$, 
is then given by:
\[
M^E_{\alpha\beta}=
\left(\begin{array}{cc}
M^{E_1}_{\alpha\beta}& 0
\\
M^{E_2}_{\alpha\beta}\til{\zeta}_\beta -\til{\zeta}_\alpha M^{E_1}_{\alpha\beta} & M^{E_2}_{\alpha\beta}
\end{array}
\right)
=
\left(
\begin{array}{cc}
M^{E_1}_{\alpha\beta}& 0 \\
 \til{\phi}_{\alpha\beta} M^{E_1}_{\alpha\beta}& M^{E_2}_{\alpha\beta}
\end{array}
\right).
\]
where $\til{\phi}_{\alpha\beta}=(M^{E_2}_{\alpha\beta}) \til{\zeta}_\beta (M^{E_1}_{\alpha\beta})^{-1}-\til{\zeta}_\alpha$ is nothing but the matrix of $\zeta_\beta-\zeta_\alpha$ under the frames $\{v_{\alpha,i}\}$ and $\{w_{\alpha,r}\}$. Because $M^{E}_{\alpha\beta}$ is holomorphic on $U_\alpha\cap U_\beta$, we indeed have $\phi_{\alpha\beta}\in End(E_1, E_2)(U_\alpha\cap U_\beta)$.

Conversely starting from any $\phi=(\phi_{\alpha\beta})$, by using the partition of unity we can find a collection $\{\zeta_\alpha\}$ with $\zeta_\alpha \in \mathcal{A}(End(E_1, E_2))(U_\alpha)$ with $\phi_{\alpha\beta}=\zeta_\beta-\zeta_\alpha$. Since $\bar{\partial}\vphi_{\alpha\beta}=0$, we get a globally defined $End(E_1, E_2)$-valued $(0,1)$-form $\psi=\bar{\partial}\zeta_\alpha=\bar{\partial}\zeta_\beta$. 
Clearly, $\{\phi_{\alpha\beta}\}$ is identified with $\psi$ under the Dolbeaut isomorphism $H^1(S', E_1^*\otimes E_2)\cong H^{0,1}_{\bar{\partial}}(S', \cA(End(E_1, E_2))$.
We will use the equivalence of these two descriptions of the extension bundle implicitly in our discussion. See \cite[V.14]{Dem} for more discussions.

\bigskip

\noindent
{\bf Step 4:} {\it Proof of the second statement of Theorem \ref{thm-poly}} 
\medskip

By the discussion in Proposition \ref{prop-compatible}, we just need to show the following 

\begin{thm}\label{thm-ETstable}
Under the same assumption as Theorem \ref{thm-poly},
let $\scr{E}_{S'}$ be the extension of $T_{S'}(-\log B)$ by $\cO_{S'}$ with the extension class $\lambda \cdot c_1(T_{S'}(-\log(B)))$. Then $\scr{E}_{S'}$ is slope semistable
with respect to $\sigma_S^*(-(K_S+\Delta))$. 
\end{thm}

The curvature of $D$ is given by:
\begin{equation}
R=
\left(
\begin{array}{cc}
R^{E_1}-\bar{\psi}\wedge \psi & -D_1\circ\bar{\psi}-\bar{\psi}\circ D_2\\
\psi\circ D_1+D_2\circ \psi & R^{E_2}-\psi\wedge \bar{\psi}
\end{array}
\right)
=:\left(
\begin{array}{cc}
\cA & -\bar{\cB} \\
\cB & \cC
\end{array}
\right).
\end{equation}
In the following calculations, we will work on $S'\setminus {\rm Supp}(B)$ where $\sigma_{S}$ is \'{e}tale and $\omega'=\sigma_S^*\omega$ is a smooth K\"{a}hler-Einstein metric. For the simplicity of notations, we don't distinguish $\omega=\sqrt{-1}\sum_{i,j}g_{i\bar{j}}dz^i\wedge d\bar{z}^j$ on $S\setminus (\Delta\cup H)$ with $\omega'=\sqrt{-1}\sum_{i,j}g'_{i\bar{j}}dw_i\wedge d\bar{w}_j$ over $S'\setminus B$. We then have:
\begin{equation}
g^{i\bar{j}}R_{i\bar{j}}=
\left(
\begin{array}{cc}
g^{i\bar{j}}\cA_{i\bar{j}} & -g^{i\bar{j}}\bar{\cB}_{i\bar{j}}\\
g^{i\bar{j}}\cB_{i\bar{j}} & g^{i\bar{j}}\cC_{i\bar{j}}
\end{array}
\right)
=
\left(
\begin{array}{cc}
\tr_{\omega}\cA & -\tr_{\omega}\bar{\cB}\\
\tr_{\omega}\cB & \tr_{\omega}\cC
\end{array}
\right).
\end{equation}
It will be convenient to write the above data using local coordinate charts and holomorphic frames. Choose local coordinate $\{z^i\}_{1\le i\le n}$ and holomorphic frames $\{v_p\}_{1\le p\le \rk(E_1)}$ and $\{w_r\}_{1\le r\le \rk(E_2)}$. We can write $\psi\in \cA^{0,1}(End(E_1, E_2))$ as
\[
\psi(v_p)=\psi^{r}_p w_r\quad \text{ with }\quad \psi^{r}_p=\psi^{r}_{p \bar{j}} d\bar{z}^j.
\]
Then by \eqref{eq-bvphi} we have the following expression for $\bar{\psi}$:
\[
\bar{\psi}(w_s)=\bar{\psi}^{q}_s v_q,\quad  \bar{\psi}^{q}_s=(h_1)^{q\bar{p}}\overline{\psi^{r}_p} (h_2)_{rs}.
\]
We can calculate explicitly:
\begin{eqnarray*}
\cB(v_p)&=&(\psi\circ D_1+D_2\circ \psi)(v_p)= \psi((\theta_1)_p^q v_q)+D_2 (\psi_p^r w_r)\\
&=&-(\theta_1)^q_p\wedge\psi^r_q w_r+d\psi^r_p w_r-\psi^r_p (\theta_2)_r^s w_s\\
&=&\left(d\psi^r_p+(\theta_2)^r_s\psi^s_p+\psi^r_q\wedge (\theta_1)^{q}_p\right)w_r=:\cB_p^r w_r;
\end{eqnarray*}
and over $S'\setminus {\rm Supp}(B)$:
\begin{eqnarray}
\tr_{\omega}\cA&=&g^{i\bar{j}}(R^{E_1}_{i\bar{j}})_p^{q}- g^{i\bar{j}} (h_1)^{q\bar{\gamma}}\overline{\psi^{r}_{\gamma \bar{i}}}(h_2)_{rs}\psi^s_{p \bar{j}}\label{eq-trA}\\
\tr_{\omega}\cC&=&g^{i\bar{j}}(R^{E_2}_{i\bar{j}})^{s}_r + g^{i\bar{j}} \psi^{s}_{p \bar{j}} (h_1)^{p\bar{q}}\overline{\psi^{t}_{q \bar{i}}}(h_2)_{rt}. \label{eq-trC}
\end{eqnarray}

Now we specialize the above construction to the case where $(E_1, h_1)=(T_{S'}, g'=\sigma_{S}^*g)$ and $(E_2, h_2)=(\cO_{S'}, \frak{b})$ with $\frak{b}\in \bR_{>0}$, $\psi= \sigma_S^*\left(\frak{a} \cdot g_{i\bar{j}}dz^i\wedge d \bar{z}^j\right)=\frac{\frak{a}}{\sqrt{-1}}\cdot \sigma_S^*\omega \in H^{0,1}_{\bar{\partial}}(\Omega^1_{S'}(\log B))$ with $\fa\in \bC$.  
Then the following properties hold over $S'\setminus {\rm Supp}(B)$:
\begin{enumerate}
\item $(R^{E_1}_{i\bar{j}})_p^{q}=R_{i\bar{j}p}^{\;\;\;\;\;q} dz^i\wedge d\bar{z}$ so that $g^{i\bar{j}}(R^{E_1}_{i\bar{j}})_p^q=Ric(\omega)_p^q$.
\item $\cB=0$ and hence $\tr_{\omega}\cB=0$:
\begin{eqnarray*}
\cB_p^r&=& d\left( \frak{a} g_{p\bar{j}}d\bar{z}^j\right)+\frak{a} g_{q \bar{j}}d\bar{z}^j\wedge g^{q\bar{s}}\partial_{i}g_{p\bar{s}}dz^i\\
&=&\frak{a}\left(\partial_{i} g_{p\bar{j}}dz^i\wedge d\bar{z}^j+\partial_ig_{p\bar{j}}d\bar{z}^j\wedge dz^i\right)=0.
\end{eqnarray*} 
\item Substituting into \eqref{eq-trA}-\eqref{eq-trC}, we get the following identities over $S'\setminus {\rm Supp}(B)$:
\begin{eqnarray}
\tr_{\omega}\cA&=&Ric^{q}_p- g^{i\bar{j}} g^{q\bar{s}}\overline{\frak{a} g_{s\bar{i}}}\;\frak{b}\; \frak{a} g_{p\bar{j}}=Ric^q_p-\frak{b} |\frak{a}|^2 \delta^{q}_p\nonumber\\
&=&(1-\frak{b}|\frak{a}|^2)\delta^q_p. \label{eq-trA1}\\
\tr_{\omega}\cC&=&g^{i\bar{j}}\frak{a} g_{p\bar{j}}g^{p\bar{q}}\overline{\frak{a} g_{q\bar{i}}}\; \frak{b}=n \frak{b}|\frak{a}|^2.  \label{eq-trC1}
\end{eqnarray}
\end{enumerate}
By choosing $\frak{a}=\lambda \sqrt{-1}$ and $\frak{b}=\frac{1}{(n+1)\lambda^2}$, we get:
\begin{eqnarray}\label{eq-trAC2}
\tr_{\omega}\cA=\frac{n}{n+1} \delta^{q}_p, \quad
\tr_{\omega}\cC=\frac{n}{n+1}.
\end{eqnarray}
So we get:
\begin{equation}\label{eq-REcontr}
g^{i\bar{j}}R_{i\bar{j}}^{\scr{E}}=\frac{n}{n+1}{\rm id}_{\scr{E}}.
\end{equation}
We can now carry out similar argument as before. Let $\scr{F}$ be any subsheaf of $\scr{E}$ of rank $r$ and let $\scr{L}=\det(\scr{F})^{**}$. The injection $\scr{F}\rightarrow \scr{E}$ determines a nonzero section
$u$ of $\wedge^r\scr{E}\otimes \scr{L}^{-1}$. Denote by $h_{\scr{E}}$ the Hermitian metric $h'\oplus \frac{1}{(n+1)\lambda^2}$ on $\scr{E}$. Fix a smooth Hermitian metric $h_{\scr{L}}$ on $\scr{L}$. Then the point is again that $|u|^2_{h_{\scr{E}}\otimes h_{\scr{L}}^{-1}}$ is bounded.
The inequality \eqref{eq-LePo} becomes:
\begin{equation}\label{eq-LePo}
-\int_{S'}\log(|u|^2+\tau^2)\sddb \chi_{\epsilon}\wedge \omega'^{n-1}\ge \int_{S'}\frac{|u|^2\chi_\epsilon}{|u|^2+\tau^2}\left(R^{\scr{L}}-\frac{(R^{\wedge^r\scr{E}} u, u)}{|u|^2}\right)\wedge \omega'^{n-1}.
\end{equation}
Using \eqref{eq-REcontr} we get, 
\[
\tr_{\omega'}R^{\wedge^r\scr{E}}=\frac{n}{n+1} r\cdot {\rm id}_{\wedge^r\scr{E}}.
\]
As before, as $\epsilon\rightarrow 0$, the left-hand-side goes to 0. The right-hand-side decomposes into two parts with limits given by:
\begin{eqnarray*}
I_1&=&\int_{S'}\frac{|u|^2\chi_\epsilon}{|u|^2+\tau^2}R^{\scr{L}}\wedge \omega'^{n-1} \xrightarrow{(\epsilon,\tau)\rightarrow (0,0)} c_1(\scr{L})\wedge [\omega']^{n-1}=\deg(\scr{F}) \\
I_2&=&-\int_{S'}\frac{|u|^2\chi_\epsilon}{|u|^2+\tau^2}\frac{1}{n}\frac{n}{n+1}r\frac{(u,u)}{|u|^2}\omega'^n=-\frac{r}{n+1}\int_{S'}\frac{|u|^2\chi_\epsilon}{|u|^2+\tau^2}\omega'^n \\&& \hskip 1cm \xrightarrow{(\epsilon,\tau)\rightarrow (0,0)}  
- \frac{r}{n+1} [\omega']^n=-\frac{r}{n+1} \deg(\scr{E})=-\frac{r}{n+1}\deg(T_{S'}(-\log(B))) 
\end{eqnarray*}
So we get the wanted inequality:
\begin{eqnarray*}
\deg(\scr{F})\le  \frac{\rk(\scr{F})}{n+1}\deg(\scr{E}).
\end{eqnarray*}


\end{proof}
\begin{rem}
Similar to the smooth case, with the orbifold (or conical) Hermitian-Einstein metrics at hand,
one should be able to prove a stronger polystability result. Since we don't need it in this paper, we will be satisfied with the semistability result.
\end{rem}

\begin{thm}\label{thm-semi}
With the above notations, assume the log smooth Fano-pair $(S, \Delta)$ is K-semistable. Then the orbifold co-tangent sheaf $T_S(-\log \Delta)$ is slope semistable with respect to $-(K_S+\Delta)$.

Let $\scr{E}$ be the extension of the orbifold tangent sheaf $T_{S}(-\log \Delta)$ by the structure sheaf $\cO_{S}$ with the extension class $\lambda \cdot c_1(-(K_S+\Delta))$ and $\lambda\in\bQ_{+}$. Then $\scr{E}$ is slope semistable with respect to $-(K_S+\Delta)$. 
\end{thm}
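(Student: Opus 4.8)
The plan is to reduce Theorem \ref{thm-semi} to the polystable case already settled in Theorem \ref{thm-poly}, by perturbing the boundary $\Delta$ and then letting the perturbation go to zero. A word on why this detour is necessary: the curvature computation in Steps 2 and 4 of the proof of Theorem \ref{thm-poly} produces an \emph{exact} identity, namely $\tr_{\omega'}R^{\wedge^r\scr E}=\frac{nr}{n+1}\,\mathrm{id}$, and it is precisely this exactness that forces the slope inequality $\deg(\scr F)\le\frac{\rk(\scr F)}{n+1}\deg(\scr E)$ in the correct direction. A mere Ricci lower bound $Ric\ge t\,\omega$ on $(S,\Delta)$ only yields a one-sided curvature bound, which goes the wrong way for the integration-by-parts estimate \eqref{eq-LePo}. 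I will therefore work throughout with honest Kähler-Einstein metrics on perturbed pairs, where the Einstein equation is exact, rather than with an approximate metric on $(S,\Delta)$ itself.

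First I fix $m\in\bN$ large and divisible so that $-m(K_S+\Delta)$ is very ample and base point free, and choose a general $H\in|-m(K_S+\Delta)|$ with $\Delta+H$ simple normal crossing. For rational $t\in(0,1)$ set $\Delta_t:=\Delta+\frac{1-t}{m}H$, so that $(S,\Delta_t)$ is again log smooth, has coefficients in $(0,1]$ for $t$ close to $1$, and satisfies $-(K_S+\Delta_t)\equiv t\cdot(-(K_S+\Delta))=:L_t$ with $L:=-(K_S+\Delta)$. Thus $(S,\Delta_t)$ is a log smooth log-Fano pair polarized by $L_t=tL$. The crucial and, I expect, \textbf{hardest step} is to produce for every $t<1$ close to $1$ a Kähler-Einstein metric $\omega_t$ on $(S,\Delta_t)$, equivalently to upgrade the K-semistability of $(S,\Delta)$ to uniform K-stability of $(S,\Delta_t)$. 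I would obtain this from the continuity method of \cite{Don12,LTW17,TW18}, with $t$ as continuity parameter. The algebraic shadow of this is the valuative identity
\[
\beta_{(S,\Delta_t)}(v)=\beta_{(S,\Delta)}(v)+(1-t)\Bigl(S_{(S,\Delta),L}(v)-\tfrac1m\,v(H)\Bigr),
\]
where $\beta_{(S,\Delta)}(v)\ge0$ by K-semistability and, for a general $H$, the perturbation term is uniformly positive after normalization. This is exactly the production of Kähler-Einstein metrics on delicate pairs, and it is the reason the log smooth hypothesis is imposed so that \cite{LTW17,TW18} applies.

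Granting $\omega_t$, Theorem \ref{thm-poly} applies verbatim to $(S,\Delta_t)$: the orbifold tangent sheaf $T_S(-\log\Delta_t)$ is slope semistable with respect to $L_t$, and for any $\lambda\in\bQ_{>0}$ the extension $\scr E_t$ of $T_S(-\log\Delta_t)$ by $\cO_S$ with class $\tfrac{\lambda}{t}\,c_1(L_t)=\lambda\,c_1(L)$ is slope semistable with respect to $L_t$; note that the choice $\lambda'=\lambda/t\in\bQ_{>0}$ matches the prescribed extension class $\lambda\,c_1(-(K_S+\Delta))$. It then remains to let $t\to1$. Over $S\setminus H$ one has $T_S(-\log\Delta_t)\cong T_S(-\log\Delta)$ and $\scr E_t\cong\scr E$, so any saturated rank-$r$ subsheaf $\scr F\subset\scr E$ induces a rank-$r$ subsheaf $\scr F_t\subset\scr E_t$ agreeing with $\scr F$ away from $H$. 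Semistability of $\scr E_t$ gives $\mu_{L_t}(\scr F_t)\le\mu_{L_t}(\scr E_t)$. All the intersection numbers here are polynomials in $t$: the discrepancy between $\scr F_t$ and $\scr F$ is supported on $H$ and contributes $O(1-t)$ to first Chern classes, while $L_t^{n-1}=t^{n-1}L^{n-1}\to L^{n-1}$, so both slopes converge to their unperturbed values. Passing to the limit yields $\mu_L(\scr F)\le\mu_L(\scr E)$, which is the asserted semistability of $\scr E$ with respect to $-(K_S+\Delta)$; the statement for $T_S(-\log\Delta)$ is the identical argument with $\scr E$ replaced by the tangent sheaf. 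The only routine items left are the degree bookkeeping along $H$ and the check that the $\lambda\mapsto\lambda/t$ identification of extension classes is compatible with the limit.
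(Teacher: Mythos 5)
Your proposal follows essentially the same route as the paper: perturb to $(S,\Delta_t)$ with $\Delta_t=\Delta+\frac{1-t}{m}H$, invoke (uniform) K-stability of the perturbed pair via \cite{LTW17} so that Theorem \ref{thm-poly} applies, and let $t\to 1$. The only divergence is in the limiting step: the paper passes to duals and exploits the sheaf inclusions $\Omega^1_S(\log\Delta)\hookrightarrow\Omega^1_S(\log\Delta_t)$ and $\scr{E}^{\vee}\to\scr{E}_t^{\vee}$, so one and the same subsheaf is tested against every $t$ and no degree bookkeeping along $H$ is required, whereas your tangent-side comparison of $\scr{F}$ with $\scr{F}_t$ relies on the deferred $O(1-t)$ estimate for the discrepancy supported on $H$ --- which does hold (the quotient $\scr{E}/\scr{E}_t$ has multiplicity proportional to $1-t$ along $H$), but the dual formulation avoids it entirely.
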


To prove this theorem, by choosing an auxiliary very ample divisor $H$ we know that the log Fano pair $(S, \Delta_t):=(S,\Delta+\frac{1-t}{m}H)$ is K-stable for any $t\in (0,1)\cap \bQ$ such that Theorem \ref{thm-poly} applies. As $t\rightarrow 1$, the semistability inequality of $\Omega^1_{S}(\Delta_t)$ will give us the semistability inequality of $\Omega^1_{S}(\Delta)$. For this to work, we note that the orbifold structure of $(S, \Delta)$ is a orbifold sub-structure of $(S, \Delta_t)$ in the sense that the global adapted morphism for $(S, \Delta_t)$ also induces a global adapted morphism of $(S, \Delta)$.

\begin{proof}

Choose a sufficiently ample divisor $H\in |m(-K_S+\Delta)|$ for $m$ sufficiently divisible. Then by similar calculation as in \cite{LTW17}, we know that for any $t\in (0,1)\cap \bQ$, $(S, \Delta+\frac{1-t}{m}H)=:(S, \Delta_t)$ 
is K-polystable (actually it's uniformly K-stable). By Theorem \ref{thm-poly}, $T_S(-\log(\Delta_t))$ is semistable with respect to $-(K_S+\Delta)$. It's well known that a sheaf $\scr{E}$ is semistable if and only if its dual $\scr{E}^{\vee}$ is semistable. So we know that $\Omega^1_S(\log(\Delta_t))$ is semistable.

Now choose an adapted finite morphism $\sigma^t_S: (S', \Delta'_t)\rightarrow (S, \Delta_t)$ with $\Delta'_t=\Delta'+\frac{1-t}{m}H'$. We can assume that $\sigma^t_S: (S', \Delta')\rightarrow (S, \Delta)$
is an adapted finite morphism that is compatible with the canonical orbifold structure of $(S, \Delta)$ (see Remark \ref{rem-compatible}). Then we have a natural inclusion:
\[
(\sigma_S^t)^*\Omega^1_S(\log(\Delta))\longhookrightarrow (\sigma_S^t)^*\Omega^1_{S}(\log(\Delta_t)).
\]
Let $\scr{F}$ be any rank $r$ orbifold subsheaf of $\Omega^1_S(\log(\Delta))$. Then $\scr{F}':=(\sigma_S^t)^*\scr{F}$ is a subsheaf of $(\sigma_S^t)^*\Omega^1_S(\log(\Delta))$. By the above inclusion, $\scr{F}'$ is also a sub sheaf of $(\sigma_S^t)^*\Omega^1_{S}(\log(\Delta_t))$ which is semistable with respect to $(\sigma_S^t)^*(-(K_S+\Delta_t))$. 
So we get:
\begin{eqnarray*}
\frac{\deg(\scr{F})}{\rk(\scr{F})}&=& \frac{1}{\deg(\sigma_S^t)}\frac{\deg(\scr{F}')}{\rk(\scr{F}')}\le \frac{1}{\deg(\sigma_S^t)}\frac{(\sigma_S^t)^*((K_S+\Delta_t)\cdot \left(\sigma_S^t)^*(-(K_S+\Delta_t)\right)^{n-1}}{n}\\
&=&-t^n \frac{ (-(K_S+\Delta))^n}{n}=t^n\cdot \frac{\deg(\Omega^1_S(\log(\Delta))}{n}.
\end{eqnarray*}
By letting $t\rightarrow 1$, we see that $\Omega^1_S(\log(\Delta))$ is semistable. As a consequence, its dual $T_S(-\log(\Delta))$ is also semistable.

Let $\scr{E}_t^{\vee}$ be the extension sheaf of $\cO_{S'}$ by $(\sigma^t_S)^*(\Omega^1_S(\log(\Delta_t)))$. By Theorem \ref{thm-poly}, $\scr{E}_t^{\vee}$ is semistable. There is a natural map $\scr{E}^{\vee}\rightarrow \scr{E}_t^{\vee}$. Using the same argument as above, we get the second statement of Theorem \ref{thm-semi}. 

\end{proof}
\begin{rem}
With the help of the properness of log-Mabuchi energy established in \cite[Theorem 2.6]{LTW17} (following \cite{BBJ15}), it's easy to get that, under the assumption of Theorem \ref{thm-semi}, for any $t\in (0,1)$, there exists a conical K\"{a}hler metric $\omega_t\in 2\pi c_1(-(K_S+\Delta))$ satisfying:
\[
Ric(\omega_t)=t \omega_t+(1-t)\hat{\omega}_0,
\]
where $\hat{\omega}_0$ is a fixed conical K\"{a}hler metric on the smooth log pair $(S, \Delta)$ (see \cite{Li13}). One can also carry out the proof of Theorem \ref{thm-semi} by using such twisted conical K\"{a}hler-Einstein metrics similar to \cite{Tia92} and \cite[Proof of Theorem 4.1]{GT16}.
\end{rem}

\subsection{A result about singular log-Fano pairs}\label{sec-singularstable}

Let $(S, \Delta)$ be a log-Fano pair with klt singularities. Let $\mu_S: (\til{S}, \til{\Delta})\rightarrow (S, \Delta)$ be a log resolution such that $\til{\Delta}+\sum_i E_i$ is simple normal crossing. We can write:
\begin{equation}\label{eq-Ktrans2}
K_{\til{S}}+\til{\Delta}=\mu^*(K_S+\Delta)+\sum_j c_j E_j \text{ with } c_j>-1.
\end{equation}
The goal of this section is to prove the following technical result:
\begin{prop}\label{prop-singularstable}
Assume that $S$ is $\bQ$-factorial and that there exists a log resolution with $c_i\in (-1,0]$ for all $i$. Then the orbifold tangent sheaf $T_{\til{S}}(-\log(\til{\Delta})$ is slope semistable with respect to $\mu_S^*(-(K_S+\Delta)$.
\end{prop}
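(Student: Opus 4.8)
The plan is to reduce the statement to the log-smooth K-semistable case already handled in Theorem \ref{thm-semi}, by passing to the crepant model. First I would set $\Delta':=\til\Delta+\sum_j(-c_j)E_j$. Since every $c_j\in(-1,0]$, the coefficients $-c_j$ lie in $[0,1)$, so $\Delta'$ is an effective boundary and $(\til S,\Delta')$ is a log-smooth klt pair; by \eqref{eq-Ktrans2} it satisfies $K_{\til S}+\Delta'=\mu_S^*(K_S+\Delta)$, so that $L:=\mu_S^*(-(K_S+\Delta))=-(K_{\til S}+\Delta')$ is nef and big. The numerical fact I would use throughout is that each $E_j$ is $\mu_S$-exceptional, whence $L^{n-1}\cdot E_j=0$ by the projection formula. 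In particular $\deg_L T_{\til S}(-\log\til\Delta)=\deg_L T_{\til S}(-\log\Delta')=L^n=(-(K_S+\Delta))^n$, and adding or deleting effective $\mu_S$-exceptional divisors leaves every $L$-degree unchanged.

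The heart of the argument is to prove that $T_{\til S}(-\log\Delta')$ is slope semistable with respect to $L$. For this I would rerun the differential-geometric estimate of Theorem \ref{thm-poly}, with the K\"ahler-Einstein metric there replaced by the pullback to $(\til S,\Delta')$ of the conical K\"ahler-Einstein metric $\omega_t$ on the perturbed pair $(S,\Delta_t)=(S,\Delta+\tfrac{1-t}{m}H)$, applying the estimate for each $t\in(0,1)$ and letting $t\to 1$ in the resulting slope inequality exactly as in Theorem \ref{thm-semi}. The condition $c_j\le 0$ is exactly what guarantees that this pullback is a bona fide conical current, with cone angle $2\pi(1+c_j)\in(0,2\pi]$ along each $E_j$ (and this persists after the perturbation, since the discrepancies over $(S,\Delta_t)$ only decrease); therefore, after passing to a Kawamata cover $\sigma\colon S'\to\til S$ adapted to $(\til S,\Delta')$, the Hermitian metric induced on $\wedge^r T_{S'}(-\log B')$ is bounded, which is precisely what is needed for the Lelong-type inequality \eqref{eq-vectLelong} and the cut-off integration in \eqref{eq-LePo} to carry over verbatim. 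Since off the exceptional and boundary loci the pulled-back metric still satisfies the Einstein equation and represents $L$, all cohomological pairings are computed against $L^{n-1}$, and one obtains $\deg_L\scr{F}/\rk\scr{F}\le \deg_L T_{\til S}(-\log\Delta')/n$ for every subsheaf $\scr{F}$; descending from $S'$ to $\til S$ via Proposition \ref{prop-compatible} gives semistability of $T_{\til S}(-\log\Delta')$.

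It then remains to transfer semistability from $T_{\til S}(-\log\Delta')$ to $T_{\til S}(-\log\til\Delta)$. Because $\Delta'\ge\til\Delta$ there is a natural inclusion $T_{\til S}(-\log\Delta')\hookrightarrow T_{\til S}(-\log\til\Delta)$ which is an isomorphism away from $\bigcup_j E_j$. Given a subsheaf $\scr{F}\subseteq T_{\til S}(-\log\til\Delta)$ of rank $r$, I would set $\scr{F}':=\scr{F}\cap T_{\til S}(-\log\Delta')$; this again has rank $r$, and $\scr{F}/\scr{F}'$ embeds into $T_{\til S}(-\log\til\Delta)/T_{\til S}(-\log\Delta')$, hence is supported on $\bigcup_j E_j$. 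Thus $c_1(\scr{F})-c_1(\scr{F}')$ is represented by an effective $\mu_S$-exceptional divisor, so $\deg_L\scr{F}=\deg_L\scr{F}'$; combining this with the semistability of $T_{\til S}(-\log\Delta')$ and the equality $\deg_L T_{\til S}(-\log\Delta')=\deg_L T_{\til S}(-\log\til\Delta)$ yields $\deg_L\scr{F}/r\le \deg_L T_{\til S}(-\log\til\Delta)/n$, as required.

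The step I expect to be the real obstacle is the analytic input underlying the middle paragraph: producing the conical (or twisted) K\"ahler-Einstein metrics on the singular K-semistable pair $(S,\Delta)$, and controlling their pullback near the exceptional divisors well enough to run the curvature estimate. The reductions in the first and third paragraphs are formal, but they only become useful once one knows that the pullback metric is a bounded Hermitian metric on the orbifold tangent sheaf of $(\til S,\Delta')$ and is Einstein off the boundary; this is exactly where the hypothesis $c_j\in(-1,0]$---effective $\Delta'$ and cone angles at most $2\pi$---is indispensable, together with the singular Yau-Tian-Donaldson theory for the perturbations $(S,\Delta_t)$. Lifting the restriction $c_j\le 0$, i.e.\ allowing cone angles larger than $2\pi$, is where I expect the genuine difficulty to lie, in line with the obstruction flagged in the introduction for general singular K-semistable Fano pairs.
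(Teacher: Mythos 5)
Your first and third paragraphs are sound formal reductions (the projection formula does give $L^{n-1}\cdot E_j=0$, so exceptional modifications of the boundary do not change any $L$-degree), but the middle paragraph contains a genuine gap, and it is exactly the one you flag yourself without filling. Your argument needs a conical K\"ahler--Einstein metric on the \emph{singular} perturbed pair $(S,\Delta+\tfrac{1-t}{m}H)$, pulled back to $\til{S}$ so as to induce a \emph{bounded} Hermitian metric on the orbifold tangent sheaf of $(\til S,\Delta')$ in the class $L=\mu_S^*(-(K_S+\Delta))$. Neither ingredient is available: the paper explicitly notes the technical difficulty of producing K\"ahler--Einstein metrics on general singular K-semistable Fano pairs, and even granting existence downstairs, $L$ is only nef and big on $\til S$, so $\mu_S^*\omega_t$ degenerates (rather than being conical) along the $E_j$ and the boundedness needed for the Lelong-type inequality and the cut-off integration does not follow. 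You also never use the $\bQ$-factoriality hypothesis, which is a sign that the mechanism making the proof work has been bypassed.

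The paper's proof avoids all analysis on the singular $S$ by perturbing \emph{both} the boundary and the polarization on the smooth model: using $\bQ$-factoriality it picks $\theta_i\in(0,1)\cap\bQ$ with $-\mu_S^*(K_S+\Delta)-\sum_i\theta_iE_i$ ample, takes $H$ general in $\left|-m\bigl(\mu_S^*(K_S+\Delta)-\sum_i\theta_iE_i\bigr)\right|$, and forms $A_{(t,\epsilon)}=\tfrac{1-t}{m}H+\til\Delta+\sum_j\alpha_jE_j$ with $\alpha_j=(-c_j)+t\epsilon\theta_j+(1-t)\theta_j\in[0,1)$ (this is where $c_j\in(-1,0]$ enters). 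Then $(\til S,A_{(t,\epsilon)})$ is a genuinely log-smooth, K-stable log-Fano pair with \emph{ample} anticanonical class, so the already-proved log-smooth case (Theorem \ref{thm-semi}) applies directly; the inclusion $(\sigma^{(t,\epsilon)})^*\Omega^1_{\til S}(\log\til\Delta)\hookrightarrow(\sigma^{(t,\epsilon)})^*\Omega^1_{\til S}(\log A_{(t,\epsilon)})$ converts subsheaves, and letting $(t,\epsilon)\to(1,0)$ recovers the slope bound against $L$. If you want to salvage your write-up, replace the middle paragraph by this upstairs perturbation; your crepant-boundary bookkeeping in the first and third paragraphs then becomes unnecessary, since the paper's choice of $A_{(t,\epsilon)}$ already absorbs the discrepancies $-c_j$ into the boundary.
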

\begin{proof}
We just need to show $\Omega^1_{\til{S}}(\log(\til{\Delta}))$ is semistable with respect to $\mu_S^*(-(K_S+\Delta))$.

Choose $\theta_i\in (0,1)\cap \bQ$ such that $-\mu^*(K_S+\Delta)-\sum_i \theta_i E_i$ is ample. 
Choose $m$ sufficiently divisible and $H\in \left|-m (\mu_S^*(K_S+\Delta)-\sum_i\theta_i E_i)\right|$ be a very general smooth divisor such that $H+\til{\Delta}+E$ has simple normal crossings. Then by the same proof as \cite[Proof of Proposition 3.1]{LTW17}, we can show that if $m\gg 1$ then $(\til{S}, A_{(t,\epsilon)}))$ is K-stable where
$$
A_{(t,\epsilon)}= \frac{1-t}{m}H+\til{\Delta}+\sum_j \left((-c_j)+t\epsilon\theta_i+(1-t)\theta_i\right) E_j=:\frac{1-t}{m}H+\til{\Delta}+\sum_j \alpha_j E_j.
$$
Note that by assumption, $\alpha_j\in [0,1)$ for $0<1-t\ll 1$ and $0< \epsilon\ll 1$.
Let $\sigma_{\til{S}}^{(t, \epsilon)}: (S', A'_{(t, \epsilon)})\rightarrow (\til{S}, A_{(t, \epsilon)})$ be an adapted morphism which is compatible with the canonical orbifold structure of $(S, A_{(t, \epsilon)})$. Then $\sigma_{\til{S}}^{(t, \epsilon)}: (S', \Delta')\rightarrow (\til{S}, \til{\Delta})$ is compatible with the canonical orbifold structure of $(\til{S}, \til{\Delta})$ and there is a natural inclusion
\[
(\sigma^{(t,\epsilon)})^*\Omega^1_{\til{S}}(\log(\til{\Delta}))\hookrightarrow (\sigma^{(t,\epsilon)})^*\Omega^1_{\til{S}}(\log(A_{(t,\epsilon)})).
\]
By Theorem \ref{thm-semi}, $(\sigma^{(t, \epsilon)}_{\til{S}})^*\Omega^1_{\til{S}}(\log(A_{(t, \epsilon)}))$ is semistable with respect to $(\sigma_{\til{S}}^{(t, \epsilon)})^*(-(K_{\til{S}}+A_{(t,\epsilon)}))$.  

For any rank $r$ sub sheaf $\scr{F}$ of $\Omega^1_{\til{S}}(\log(\til{\Delta}))$, $\scr{F}'=(\sigma^{(t, \epsilon)}_{\til{S}})^*\scr{F}$ is a subsheaf of $(\sigma^{(t, \epsilon)}_{\til{S}})^*\Omega^1_{\til{S}}(\log(\til{\Delta}))$. \begin{eqnarray*}
\frac{\deg(\scr{F})}{\rk(\scr{F})}&=&\frac{1}{\deg(\sigma_{\til{S}}^{(t, \epsilon)})}\frac{\deg(\scr{F}')}{\rk(\scr{F}')}\\
&\le&\frac{1}{\deg(\sigma_{\til{S}}^{(t, \epsilon)})} \frac{((\sigma^{(t,\epsilon)}_{\til{S}})^*((K_{\til{S}}+A_{(t, \epsilon)}))\cdot  ((\sigma_{\til{S}}^{(t,\epsilon)})^*(-(K_{\til{S}}+A_{(t, \epsilon)})))^{n-1}}{n}\\
&=&-t^n\frac{\left(\mu_S^*(-(K_S+\Delta))-\epsilon\sum_i\theta_iE_i\right)^n}{n}.
\end{eqnarray*}
Letting $(t,\epsilon)\rightarrow (1,0)$ and noticing that:
\[
\deg(\Omega^1_{\til{S}}(\log(\til{\Delta})))=(K_{\til{S}}+\til{\Delta})\cdot (\mu_S^*(-(K_S+\Delta)))^{n-1}=-(\mu_S^*(-(K_S+\Delta)))^n
\]
we get the wanted inequality:
\begin{eqnarray*}
\frac{\deg(\scr{F})}{\rk(\scr{F})}\le \frac{\deg(\Omega^1_{\til{S}}(\log(\til{\Delta})))}{n},
\end{eqnarray*}
which implies $\Omega^1_{\til{S}}(\log(\til{\Delta}))$ is semistable with respect to $\mu_S^*(-(K_S+\Delta))$.

\end{proof}

\subsection{Log Calabi-Yau case}\label{sec-logCY}

In this section, we sketch the proof of Theorem \ref{thm-mainlogCY} and leave the details to the reader, since it is very similar to the proof of Theorem \ref{thm-poly}. First it is known that there is a K\"{a}hler Ricci-flat metric $\omega_{\CY}$ in the cohomology class $2\pi c_1(L)$ with on the log smooth pair $(S, \Delta)$. By \cite[Theorem 6.3]{GP16} $\omega_{\CY}$ has cone singularities along $\Delta_i$ if $\delta_i\in (0,1)$ and cusp singularities 
if $\delta_i=1$. In other words, $\omega_\CY$ is locally quasi-isometric to the model metric:
\begin{equation*}
\sum_{k=p+1}^m \frac{\sqrt{-1} dz_k\wedge d\bar{z}_k}{|z_k|^{2\delta_k}}+\sum_{k=m+1}^{n} \frac{\sqrt{-1} dz_k\wedge d\bar{z}_k}{|z_k|^2(-\log|z_k|^2)}+\sum_{k=1}^p\sqrt{-1}dz_k\wedge d\bar{z}_k.
\end{equation*}

 Use the same notations as proof of Theorem \ref{thm-poly}, we let $g$ be the metric associated to $\omega_{\CY}$ and $g'$ the pull back of $g$ by the Galois covering $\sigma_S: S'\rightarrow S$. Then $\sigma_S^*\omega$ is locally quasi-isometric to:
\begin{equation*}
\sum_{k=p'+1}^m w_k^{-2d_k}dw_k\wedge d\bar{w}_k+\sum_{k=m+1}^n\frac{\sqrt{-1}dw_k\wedge d\bar{w}_k}{|w_k|^2(-\log|w_k|^2)}+\sum_{k=1}^{p'}\sqrt{-1} dw_k\wedge d\bar{w}_k.
\end{equation*}
Since the section $u$ of $\wedge^r (T_{S'}(-\log B))\otimes (\wedge^r\scr{F})^{-1}$ associated to any rank $r$ subsheaf $\scr{F}$ of $T_{S'}(-\log B))$ still has a bounded norm on $S'$, the first statement as in proof of Theorem \ref{thm-poly} can be proved in similar way as before following the argument in \cite[p.23]{GT16}. 

For the second statement, we let $(E_1, h_1)=(T_{S'}, g')$ and $(E_2, h_2)=(\cO_{S'}, \frak{b})$, $\psi=g'$, $\frak{a}=1$ and $\frak{b}$ any positive constant.
Then \eqref{eq-trA1}-\eqref{eq-trC1} becomes:
\begin{equation}
\tr_{\omega'}\cA=-\frak{b}\cdot \delta^{\beta}_\alpha, \quad \tr_{\omega'}\cC=n \frak{b}.
\end{equation}
In particular 
\[
\tr_{\omega'}R^{\scr{E}}=-\frak{b}\cdot {\rm id}_{\scr{E}}+(0\oplus (n+1)\frak{b})=:-\frak{b}\cdot \id_{\scr{E}}+\eta.
\]
Then we have:
\[
\tr_{\omega'}R^{\wedge^r\scr{E}}=(-\frak{b}\cdot {\rm id})^{\wedge^r}+\eta^{\wedge^r}=-\frak{b} \cdot r\cdot \id_{\wedge^r\scr{E}}+\eta^{\wedge^r}.
\]
Note that $\eta^{\wedge^r}=0$ if $r>1$ and in general we always have:
\[
\frac{(\eta^{\wedge^r}u, u)}{|u|^2}\le \lambda_{\max}(\eta^{\wedge^r})\le (n+1)\frak{b}.
\]
As in \eqref{eq-LePo}, we have the following inequality:
\begin{equation}\label{eq-LePo2}
-\int_{S'}\log(|u|^2+\tau^2)\sddb \chi_{\epsilon}\wedge \omega'^{n-1}\ge \int_{S'}\frac{|u|^2\chi_\epsilon}{|u|^2+\tau^2}\left(R^{\scr{L}}-\frac{(R^{\wedge^r\scr{E}} u, u)}{|u|^2}\right)\wedge \omega'^{n-1}.
\end{equation}
As $\epsilon\rightarrow 0$, the left-hand-side goes to 0. The right-hand-side decomposes into three parts with estimates:

\begin{eqnarray*}
I_1&=&\int_{S'}\frac{|u|^2\chi_\epsilon}{|u|^2+\tau^2}R^{\scr{L}}\wedge \omega'^{n-1} \xrightarrow{(\epsilon,\tau)\rightarrow (0,0)} c_1(\scr{L})\wedge [\omega']^{n-1}=\deg(\scr{F}) \\
I_2&=&-\int_{S'}\frac{|u|^2\chi_\epsilon}{|u|^2+\tau^2}\frac{1}{n} (-\frak{b}\cdot r)\frac{(u,u)}{|u|^2}\omega'^n=\frac{r\cdot \frak{b}}{n}\int_{S'}\frac{|u|^2\chi_\epsilon}{|u|^2+\tau^2}\omega'^n \\&& \hskip 1cm \xrightarrow{(\epsilon,\tau)\rightarrow (0,0)}  
\frac{r\cdot \frak{b}}{n} [\omega']^n=\frac{r\cdot \frak{b}}{n}(\sigma_S^*L)^{\cdot n}\\
I_3&=&-\int_{S'}\frac{|u|^2\chi_\epsilon}{|u|^2+\tau^2}\frac{(\eta^{\wedge r}u, u)}{|u|^2}\frac{1}{n}\omega'^n\ge -\frac{n+1}{n}\frak{b} \int_{S'}\omega'^n.
\end{eqnarray*} 
So we get the inequality:
\begin{eqnarray*}
\deg(\scr{F})\le \frak{b} \frac{\rk(\scr{F})}{n}(\sigma_S^*L)^{\cdot n}+\frac{n+1}{n}\frak{b}(\sigma_S^*L)^{\cdot n}.
\end{eqnarray*}
Note that $\deg(\scr{E})=\deg(T^1_{S'}(-\log(B))=0$. By letting $\frak{b} \rightarrow 0$, we get the wanted inequality: $\deg(\scr{F})/\rk(\scr{F})\le 0$.

\section{Applications}

\subsection{Local Euler numbers for 2-dimensional log canonical cones}\label{sec-volEuler}

 Let $(X, D, x)$ be a log terminal singularity and let $\Val_{X,x}$ denote the space of real valuations on $\cO_X$ whose center is at $x$. For any $v\in \Val_{X,x}$, denote by $A_{(X,D)}(v)$ its 
 log discrepancy (see \cite{JM12, BFFU15}) and by $\vol(v)$ its volume (see \cite{ELS03}). Then we recall:
 \begin{defn}[see \cite{Li15, LL16}]\label{defn-hvol}
The normalized volume of a log terminal singularity $(X,D,x)$ is defined to be:
\begin{equation}\label{eq-defhvol}
\hvol(x, X, D):=\inf_{v\in \Val_{X,x}} A_{(X,D)}(v)^{n} \vol(v). 
\end{equation}
\end{defn}
It was proved in \cite{Li15} that $\hvol(x, X, D)>0$ if $(X, D, x)$ is log terminal. H. Blum \cite{Blu16} proved that the infimum in \eqref{eq-defhvol} is actually obtained. 
The normalized volume of cone singularities over K-semistable log pairs can be calculated exactly:
\begin{thm}[\cite{Li15b,LL16,LX16}]\label{thm-semivol}
Let $(S, \Delta)$ be a log-Fano pair and $L$ an ample $\bQ$-Cartier divisor such that $-(K_S+\Delta)=\lambda \cdot L$ for $\lambda\in \bQ_{>0}$. Let $X=C(S, L)$ be the corresponding orbifold affine cone
and $D$ the divisor on $X$ corresponding to $\Delta$. Then $(S, \Delta)$ is K-semistable if and only if  $\hvol(x, X,D)=\lambda^{n+1} L^n=\lambda (-(K_S+\Delta))^n$.
\end{thm}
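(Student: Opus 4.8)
The plan is to show that the canonical valuation coming from the cone structure always realizes the value $\lambda^{n+1}L^n$, and that it minimizes the normalized volume exactly when $(S,\Delta)$ is K-semistable; the asserted equivalence then follows since $\hvol$ is by definition the infimum. First I would evaluate $A_{(X,D)}$ and $\vol$ on the canonical valuation $v_0=\ord_E$, the order of vanishing along the exceptional divisor $E\cong S$ of the blow-up $\hat{\mu}_X\colon\hat X\to X$ of the cone vertex (the zero section of $L^{-1}$, with projection $\pi_S\colon\hat X\to S$). On a degree-$k$ element of the section ring $R=\bigoplus_k H^0(S,kL)$ one has $v_0=k$, so $\cO_{X,x}/\{v_0\ge m\}\cong\bigoplus_{k<m}H^0(S,kL)$ and asymptotic Riemann--Roch gives $\vol(v_0)=\lim_m (n+1)!\,m^{-(n+1)}\sum_{k<m}h^0(S,kL)=L^n$. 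For the log discrepancy I would use $K_{\hat X}=\pi_S^*(K_S+L)$, $\cO_{\hat X}(E)|_E=L^{-1}$ and $\hat D|_E=\Delta$; restricting the identity $K_{\hat X}+\hat D=\hat{\mu}_X^*(K_X+D)+(A_{(X,D)}(v_0)-1)E$ to $E\cong S$ and using $\hat{\mu}_X^*(K_X+D)|_E\equiv 0$ yields $K_S+\Delta=-A_{(X,D)}(v_0)\,L$, whence $A_{(X,D)}(v_0)=\lambda$. Since $A^{n+1}\vol$ is invariant under rescaling the valuation, $v_0$ gives $A_{(X,D)}(v_0)^{n+1}\vol(v_0)=\lambda^{n+1}L^n$, so $\hvol(x,X,D)\le\lambda^{n+1}L^n$ holds unconditionally.

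Next I would prove that equality holds iff $(S,\Delta)$ is K-semistable. The first reduction is to $\bC^*$-invariant valuations: the cone $\bC^*$-action induces an action on $\Val_{X,x}$, and degenerating an arbitrary $v$ to its initial (associated-graded, hence $\bC^*$-invariant) valuation $\mathrm{in}(v)$ preserves $\vol$ while not increasing $A_{(X,D)}$, so $\hvol$ is already computed on $\bC^*$-invariant valuations (following \cite{Li15b,LX16}). The second point is that the relevant $\bC^*$-invariant quasi-monomial valuations centered at $x$ are encoded by linearly bounded filtrations of $R$, equivalently by pairs $(\ord_F,t)$ with $F$ a divisor over $S$; along such a ray emanating from $v_0$ the functional $A_{(X,D)}^{n+1}\vol$ is convex in $t$, and its first variation at $v_0$ is a positive multiple of Fujita's invariant $\beta_{(S,\Delta)}(F)=A_{(S,\Delta)}(F)\,(-(K_S+\Delta))^n-\int_0^\infty \vol(-(K_S+\Delta)-sF)\,ds$, computed on a model where $F$ appears. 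By convexity, $v_0$ minimizes iff $\beta_{(S,\Delta)}(F)\ge 0$ for every $F$, which by the valuative criterion of Fujita--Li is precisely the K-semistability of $(S,\Delta)$.

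The main obstacle is this second part. Two ingredients carry the technical weight: the degeneration argument reducing to $\bC^*$-invariant valuations, which rests on the lower semicontinuity and degeneration behaviour of $\vol$ and $A_{(X,D)}$ from \cite{Li15b,LX16}; and the exact identification of the cone's normalized-volume functional along base directions with $\beta_{(S,\Delta)}(F)$, together with the convexity needed to upgrade nonnegative first variation at $v_0$ into $v_0$ being a genuine global minimizer. Once these are in place, both implications follow at once: if $(S,\Delta)$ is K-semistable then $\beta_{(S,\Delta)}(F)\ge 0$ for all $F$, so $v_0$ is a minimizer and $\hvol(x,X,D)=\lambda^{n+1}L^n$; conversely, if $\hvol(x,X,D)=\lambda^{n+1}L^n$ then $v_0$ is a minimizer, forcing $\beta_{(S,\Delta)}(F)\ge 0$ for all $F$ and hence the K-semistability of $(S,\Delta)$.
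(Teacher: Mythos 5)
The paper does not prove this statement: Theorem \ref{thm-semivol} is quoted verbatim from \cite{Li15b,LL16,LX16}, so there is no in-paper argument to compare against. Your outline is, in substance, a faithful summary of the proof strategy in those references: the computation $A_{(X,D)}(v_0)=\lambda$, $\vol(v_0)=L^n$ for the canonical valuation is correct (and your restriction-to-$E$ derivation of the log discrepancy is the standard one), so the inequality $\hvol(x,X,D)\le\lambda^{n+1}L^n$ is unconditional, and the equivalence is exactly the content of equivariant volume minimization plus the Fujita--Li valuative criterion.

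Three places where your sketch hides the real work, and where it is slightly imprecise. First, the reduction to $\bC^*$-invariant valuations in \cite{Li15b,LX16} is not carried out by degenerating $v$ to $\mathbf{in}(v)$ directly (the initial object of a valuation is a priori only a quasi-valuation); it is done at the level of valuation ideals $\ka_m=\{v\ge m\}$, degenerating them to their initial ideals, using flatness to preserve multiplicity and lower semicontinuity of log canonical thresholds to control the discrepancy, and then passing back to valuations via the $\lct^{n+1}\cdot\mult$ characterization of $\hvol$. Second, the directions emanating from $v_0$ that must be tested are not only rays toward $\ord_F$ for $F$ a divisor over $S$: a general $\bC^*$-invariant quasi-monomial valuation corresponds to a linearly bounded filtration of the section ring, and the identification of the first variation with a Fujita-type $\beta$-invariant, together with the convexity of $\vol$ along these rays, has to be established for filtrations (this is where \cite{LL16} and Fujita's work enter); the divisorial case alone does not directly cover all competitors. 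Third, the theorem allows $L$ to be merely ample $\bQ$-Cartier, so $X$ is an orbifold cone and ``the total space of $L^{-1}$'' needs to be interpreted as a Seifert $\bC^*$-bundle, or one reduces to the Cartier case by the degree-$k$ cyclic cover $X\to C(S,kL)$ exactly as the paper does for the local Euler number in the proof of Proposition \ref{prop-conj}; your volume computation survives this, but the discrepancy computation as written assumes $L$ is an honest line bundle. None of these is a wrong turn --- they are the technical core of the cited papers --- but as written your proposal asserts them rather than proves them.
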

We need a more general result which deals with the case when a klt singularity degenerates to a K-semistable cone.
\begin{thm}[\cite{LX16}]\label{thm-degvol}
Let $(X, D, x)$ be a klt singularity and $v=\ord_S \in \Val_{X,x}$ be a divisorial valuation whose associated graded ring is the coordinate ring of a log Fano cone singularity $(X_0, D_0, x_0)$ ($S$ is called a Koll\'ar component in \cite{LX16}). Assume $(X_0, D_0, x_0)$ is K-semistable. Then we have the identity: 
\begin{equation}
\hvol(x,X,D)=\hvol(x_0, X_0, D_0)=A_{X,D}(S)^n\cdot \vol(\ord_S).
\end{equation}

\end{thm}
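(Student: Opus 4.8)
The plan is to squeeze $\hvol(x,X,D)$ between two equal quantities by combining a one-parameter degeneration with the semistability computation of Theorem \ref{thm-semivol}. First I would set up the degeneration determined by $v=\ord_S$. Its filtration $\cF^\lambda\cO_{X,x}=\{f: v(f)\ge\lambda\}$ has a finitely generated associated graded ring $\gr_v\cO_{X,x}$ by the Kollár component hypothesis, and by assumption $\Spec(\gr_v\cO_{X,x})$ carries the structure of the log Fano cone $(X_0,D_0,x_0)$. The Rees algebra construction then produces a flat $\bQ$-Gorenstein family $\cX\to\bA^1$, equivariant for the induced $\bC^*$-action, whose general fibre is $(X,D)$ and whose central fibre is $(X_0,D_0)$; on $X_0$ the grading supplies a canonical (Reeb) valuation $v_0$, and $\ord_S$ degenerates to $v_0$.

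Second, I would prove the invariance of the normalized-volume integrand along this degeneration, namely
\begin{equation*}
A_{(X,D)}(\ord_S)^n\cdot\vol(\ord_S)=A_{(X_0,D_0)}(v_0)^n\cdot\vol(v_0).
\end{equation*}
The volume factor is preserved because $\dim_{\bC}\cO_{X,x}/\cF^\lambda=\dim_{\bC}\gr_v\cO_{X,x}/\{\,\wt\ge\lambda\,\}$ for every $\lambda$, so $\ord_S$ and $v_0$ have identical Hilbert functions and hence identical volumes. The log-discrepancy factor is preserved by the cone structure: the discrepancy of the canonical valuation of the cone over $(S,{\rm Diff}_S(D))$ is computed by adjunction and equals the number $A_{(X,D)}(\ord_S)$ that makes $S$ a Kollár component.

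Third, I would pin down $\hvol(x_0,X_0,D_0)$ using K-semistability. Since $(X_0,D_0,x_0)$ is a K-semistable log Fano cone, Theorem \ref{thm-semivol} identifies $\hvol(x_0,X_0,D_0)$ with the value of the integrand $A^n\vol$ at the canonical cone valuation $v_0$; that is, $v_0$ minimizes the normalized volume on $X_0$. Combining with the previous display gives $\hvol(x_0,X_0,D_0)=A_{(X,D)}(\ord_S)^n\vol(\ord_S)$. Using finally that $\ord_S$ is an admissible competitor on $X$, one has $\hvol(x,X,D)\le A_{(X,D)}(\ord_S)^n\vol(\ord_S)$, and the reverse inequality $\hvol(x_0,X_0,D_0)\le\hvol(x,X,D)$ comes from lower semicontinuity of $\hvol$ in the family $\cX\to\bA^1$. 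These assemble into the chain
\begin{equation*}
\hvol(x,X,D)\ \le\ A_{(X,D)}(\ord_S)^n\vol(\ord_S)\ =\ \hvol(x_0,X_0,D_0)\ \le\ \hvol(x,X,D),
\end{equation*}
forcing all inequalities to be equalities, which is exactly the assertion.

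The main obstacle I expect is the last reverse inequality, the lower semicontinuity $\hvol(x_0,X_0,D_0)\le\hvol(x,X,D)$ along the degeneration. Unlike the other steps it is not formal: one must show that no valuation on the general fibre $X$ has normalized volume strictly below that of the central fibre, which requires producing, from a near-minimizing valuation on $X_0$, a comparable valuation on $X$ via the $\bC^*$-action on $\cX$, controlling both $A$ and $\vol$ simultaneously in the limit. This is where the finite generation of $\gr_v\cO_{X,x}$ and a careful degeneration argument for valuations (as in \cite{LX16}) are essential, and it is the technical heart of the statement.
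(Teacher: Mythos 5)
The paper does not prove this statement --- it is quoted from \cite{LX16} as a black box --- so there is no internal proof to compare against. Your sandwich argument
$\hvol(x,X,D)\le A_{X,D}(S)^n\vol(\ord_S)=\hvol(x_0,X_0,D_0)\le\hvol(x,X,D)$
is exactly the strategy of \cite{LX16}, and you correctly isolate the only non-formal step as the final inequality, which that reference establishes via the normalized-multiplicity description of $\hvol$: degenerate any ideal $\fa$ cocompact at $x$ to its initial ideal with respect to the filtration by $\ord_S$, which preserves $\mult$ while $\lct$ does not increase, giving $\hvol(x_0,X_0,D_0)\le\hvol(x,X,D)$.
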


\begin{proof}[Proof of Proposition \ref{prop-conj}]
Assume $(X,D)$ be an orbifold cone over $(\bP^1,\Delta=\sum_i \delta_i p_i)$ with the orbifold line bundle denoted by $L$. Choose $k$ sufficiently divisor such that $k L$ is genuine line bundle. 
Denote by $(Z, D_Z, z)$ the ordinary affine cone over $(\bP^1, \Delta)$ with the polarization $kL$. Then we get a degree $k$ map $\sigma: (X, D, x)\rightarrow (Z, D_Z, z)$ with $\sigma^*(K_{Z}+D_Z)=K_X+D$. Because $(S, \Delta)$ is K-semistable, by the above theorem we have, for $n=1$:
\[
\hvol(x, X, D)=\lambda^{n+1}L^n=k\cdot (\lambda k^{-1})^{n+1}(kL)^n =k \cdot \hvol(z, Z, D_Z).
\]
By Lemma \ref{lem-Eulerdeg}, $e_{\orb}(x, X, D)=k\cdot e_{\orb}(z,Z,D_Z)$. If the conjecture holds for $(Z,D_Z,z)$, then it holds for $(X, D,x)$. So we can assume $L$ is a genuine line bundle. 

Now we apply the construction in section \ref{sec-cone} to $(S, \Delta)=(\bP^1, \sum_i \delta_i p_i)$. Then $\til{X}$ is just the blow-up of $x\in X$ and $(\til{S}, \til{\Delta})=(S, \Delta)$.
Let $\sigma_S: S'\rightarrow S$ be a branched covering of degree $N$ such that $\sigma_S^*\Delta$ is a Weil divisor. 

By Corollary \ref{cor-logsmooth}, $\sigma_{\til{X}}^*\Omega^1_{\til{X}}(\log(\til{D}+S))$ is equal to $\pi_{S'}^*\scr{E}$ where $\scr{E}$ is the extension of $\cO_{S'}$ by $\Omega^1_{S'}(\log(B))$ with the extension class given by $c_1(\sigma_S^*L)$. Because $(S, \Delta)$ is K-semistable, by 
Theorem \ref{thm-semi}, $\Omega^1_{S'}(\log(B))$ is slope semistable. So by  Definition \ref{defn-locEuler} and Theorem \ref{thm-coneuler}, we have
\[
e_{\orb}(x, X, D)=-\frac{c_2(\mu_Y, \sigma_{\til{X}}^*\Omega^1_{\til{X}}(\log(\til{D}+E_x))}{N}=\frac{c_1(\scr{E})^2}{4 N \deg(\sigma_S^*L)}.
\]
Note that $\deg_{S'}(\sigma_S^*L)=N \deg_S(L)$ and
\begin{eqnarray*}
c_1(\scr{E})&=&\int_{S'} c_1(\scr{E})=\int_{S'} c_1(\Omega^1_{S'}(\log(B)))=\int_{S'} c_1(K_{S'}+B)\\
&=&\int_{S'}c_1( \sigma_S^*(K_S+\Delta))=-\lambda \cdot \deg_{S'}(\sigma_S^*L).
\end{eqnarray*}
So we easily get the wanted identity:
\[
e_{\orb}(x, X, D)=\frac{\lambda^2 N^2 \deg_S(L)^2}{4N^2 \deg_S(L)}=\frac{\lambda^2 \deg_S(L)}{4}=\frac{\hvol(x, X, D)}{4}.
\]
\end{proof}
 
\begin{proof}[Proof of Proposition \ref{prop-conjCY}]
This is proved in the same way as Proposition \ref{prop-conj} by replacing Theorem \ref{thm-semi} by Theorem \ref{thm-mainlogCY} and noticing that $c_1(\cE)=c_1(-(K_S+\Delta))=0$.
\end{proof}
 
Now we specialize to the case $(X, D, x)=(\bC^2, \sum_{i=1}^m \delta_i L_i, 0)$ where $L_i=\{b_i z_1-a_i z_2=0\}$ are lines passing through $0\in \bC^2$. Then the natural $\bC^*$-action on $\bC^2$
makes $(X,D, x)$ an affine cone over $(\bP^1, \sum_i \delta_i p_i)$ with $p_i=[a_i, b_i]\in\bP^1$. Without the loss of generality, we assume $0\le \delta_1\le \delta_2\le\dots\le \delta_m\le 1$ and denote $\delta=\sum_{i=1}^m \delta_i$ and $\delta'=\sum_{i=1}^{m-1}\delta_i=\delta-\delta_m$.
We have the following cases:
\begin{enumerate}
\item $\delta>2$. $(\bC^2, D)$ is not log canonical. Then by \cite[Theorem 8.7]{Lan03}, $e_{\orb}(0; \bC^2, D)=0$. 
\item $\delta=2$. This is the log-Calabi-Yau case. By Proposition \ref{prop-conjCY}, we get $e_{\orb}(0; \bC^2, D)=0$.
\item $\delta<2$ and $\delta_m\ge \delta'$, by \cite[Theorem 8.7]{Lan03}, $e_{\orb}(0; \bC^2, D)=(1-\delta+\delta_m)(1-\delta_m)$ which is $0$ if $\delta_m=1$ (log canonical case). 

If $\delta_m<1$, then $(\bC^2, D)$ is klt and is unstable with respect to the natural rescaling vector field.
Without loss of generality, we can assume $p_m=\{0\}\in \bP^1=\bC\cup \{\infty\}$. There is then a $\bC^*$-equivariant degeneration of $(\bC^2, D)$ to the log-Fano cone $(\bC^2, \sum_i D'_i=\delta_m \{0\}+\delta'\{\infty\})$ with the $\bC^*$-action generated by $(1-\delta') z_1\frac{\partial}{\partial z_1}+(1-\delta_m) z_2\frac{\partial}{\partial z_2}$.
This corresponds to the jumping of metric tangent cone as explained in \cite[p.34]{BS17}.

 It's easy to check that the quotient of $(\bC^2, D')$ is given by $(\bP^1, \gamma(\{0\}+\{\infty\}))$ where $\gamma$ is determined by the following identities (see Example \ref{exmp-C2D}):
\[
\frac{1-\delta'}{1-\delta_m}=\frac{b}{a},  \quad a,b\in \mathbb{N}, \quad {\rm gcd}(a,b)=1, \quad \gamma=1-\frac{1-\delta'}{b}=1-\frac{1-\delta_m}{a}.
\]
Since $(\bC^2, \gamma (p_m+\bar{p}_m))$ is K-semistable and hence the log Fano cone $(\bC^2, D')$ is also K-semistable, by Theorem \ref{thm-degvol} we know that 
(use Theorem \ref{thm-semivol} with $\lambda=b(1-\delta_m)+a(1-\delta')$)
$$\hvol(0; \bC^2, D)/4=\hvol(0; \bC^2, D')/4=(1-\delta')(1-\delta_m)=(1-\delta+\delta_m)(1-\delta_m).$$

\item $\delta<2$ and $\delta_m< \delta'$. Then $(\bP^1, \sum_i \delta_i p_i)$ is K-stable (called the ``stable regime" in \cite{BS17}) . So by Proposition \ref{prop-conj}, $e_{\orb}(0; \bC^2, D)=\hvol(0; \bC^2, D)/4=(2-\delta)^2/4$.  
\end{enumerate}
\begin{proof}[Proof of Corollary \ref{cor-conetrue}]
$(X, D, x)$ is an orbifold cone over $(\bP^1, \sum_i \delta_i p_i)$ with the orbifold line bundle $L$. By choosing $k$ sufficiently divisible, $kL$ is a genuine line bundle. Let $(Z, D_Z, z)$ be the affine cone over $(\bP^1, \sum_i \delta_i p_i)$ with polarization $kL$. On the other hand $kL=d \cO_{\bP^1}(1)$ for some $d\in \bZ_{>0}$ and there is a Galois covering of degree $d$: $(\bC^2, \sum_i \delta_i L_i, 0)\rightarrow (Z, D_Z, z)$ where $L_i$ are lines given by $p_i\in \bP^1$. By Lemma \ref{lem-Eulerdeg}, we then have:
\[
e_{\orb}(x, X, D)=k\cdot e_{\orb}(z, Z, D_Z)=\frac{k}{d} e_{\orb}(0, \bC^2, \sum_i \delta_i L_i).
\]
By Theorem \ref{thm-semivol} and Theorem \ref{thm-degvol}, we have the same relation for $\hvol/4$ (see also \cite[Theorem 1.7]{LX17}). So the statement follows from the discussion above for the case of $(0, \bC^2, \sum_i \delta_i L_i)$.
\end{proof}




\subsection{Logarithmic Miyaoka-Yau inequalities for K-semistable pairs}\label{sec-Chineq}

In this section, we give the proof of Theorem \ref{thm-Chineq} and Theorem \ref{thm-ChineqCY}. 
First recall the well-known Bogomolov-Gieseker inequality:
\begin{thm}[{see \cite[4]{Miy87}}]\label{thm-Bog}
Let $S'$ be a projective manifold and let $H$ be a nef line bundle on 
$S'$. If $E$ is any reflexive coherent sheaf of rank $r$ that is stable with respect to $H$, then $E$ verifies:
\begin{equation}
\Delta(E)\cdot H^{n-2}\ge 0,
\end{equation}
where $\Delta(E)$ is the Bogomolov discriminant:
\[
\Delta(E):=2 r c_2(E)-(r-1) c_1(E)^2.
\]
\end{thm}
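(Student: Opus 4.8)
The plan is to reduce the statement to the case of a surface and then invoke Bogomolov's instability theorem. Throughout it suffices to work with slope-semistability, since the hypothesis that $E$ is stable implies it is semistable and the claimed inequality is about semistable sheaves. As a preliminary reduction I would pass from nef $H$ to ample $H$: the number $\Delta(E)\cdot H^{n-2}$ is a continuous multilinear function of $H$, every nef class is a limit of ample ones, and the conclusion $\geq 0$ is a closed condition, so it is enough to treat ample polarizations. The only point needing care is that semistability be arrangeable along the approximating family of ample classes; this holds in particular when $H$ is semiample, as in the applications of this paper, where $H=\sigma_S^*(-(K_S+\Delta))$ is pulled back from an ample class.

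Second, I would reduce to $n=2$ using the Mehta--Ramanathan restriction theorem. Choose $k\gg 0$ and a general complete-intersection surface $Y=D_1\cap\cdots\cap D_{n-2}$ with $D_i\in|kH|$. Since $E$ is reflexive on the smooth variety $S'$ it is locally free away from a closed set of codimension $\geq 3$, so a general $Y$ avoids that locus and $E|_Y$ is locally free. By Mehta--Ramanathan, for $k\gg 0$ and general $Y$ the restriction $E|_Y$ is again slope-semistable with respect to $H|_Y$. As $c_i(E|_Y)=c_i(E)\cdot(kH)^{n-2}$, we get $\Delta(E|_Y)=k^{n-2}\,\Delta(E)\cdot H^{n-2}$, so the sign is unchanged and it suffices to establish $\Delta(E|_Y)\geq 0$ on the surface $Y$.

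Third, and this is the crux, I would prove Bogomolov's inequality on the surface. Arguing by contradiction, assume $\Delta(E)<0$ for a slope-semistable locally free sheaf $E$ of rank $r$ on a smooth projective surface with ample $H$. Riemann--Roch for $\mathcal{E}nd(E)$, which has rank $r^2$, $c_1=0$ and $c_2=\Delta(E)$, gives $\chi(\mathcal{E}nd(E))=r^2\chi(\mathcal O)-\Delta(E)$; studying the twisted sheaves $\mathcal{E}nd(E)\otimes\mathcal O(D)$ together with Serre duality, the negativity of $\Delta(E)$ produces a nonzero non-scalar homomorphism $\phi\colon E\to E\otimes A$ for a line bundle $A$ of controlled first Chern class. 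The saturation of $\ker\phi$ (or of the image) is then a proper subsheaf $F$, and the Hodge index theorem converts $\Delta(E)<0$ into positivity of $\bigl(\tfrac{c_1(F)}{\rk F}-\tfrac{c_1(E)}{r}\bigr)^2$ together with the correct sign of its intersection with $H$. Hence $F$ destabilizes $E$, contradicting semistability and forcing $\Delta(E)\geq 0$.

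The main obstacle is precisely this last step: extracting a genuine destabilizing subsheaf from the numerical inequality $\Delta(E)<0$ and pinning down its slope via the Hodge index theorem. An alternative that is closer to the analytic methods used elsewhere in this paper would avoid the instability argument altogether: since $E$ is stable with respect to the ample $H$, the Donaldson--Uhlenbeck--Yau theorem provides a Hermitian--Einstein metric on $E$ over the locus where it is a bundle (which has complement of codimension $\geq 3$ and so does not affect the codimension-$2$ Chern-number computation), and Lübke's pointwise Chern--Weil inequality exhibits the integrand of $\Delta(E)\cdot[\omega]^{n-2}$ as a nonnegative combination of squared trace-free curvature components, yielding the inequality directly.
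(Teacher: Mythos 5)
The paper does not actually prove this statement: it is quoted as a classical black box from Miyaoka \cite{Miy87}, so there is no in-paper argument to compare yours against, and the only question is whether your outline is a sound proof of the classical theorem. In broad strokes it is the standard one --- pass to an ample polarization, restrict to a general complete-intersection surface via Mehta--Ramanathan (correctly observing that a reflexive sheaf on a smooth variety is locally free off a codimension-$\ge 3$ set, which a general such surface avoids), and then prove the surface case either by Bogomolov's instability argument or, for stable sheaves, by Donaldson--Uhlenbeck--Yau (Bando--Siu in the reflexive case) together with L\"ubke's pointwise Chern--Weil inequality. The surface-level bookkeeping you describe ($\mathcal{E}nd(E)$ has rank $r^2$, $c_1=0$, $c_2=\Delta(E)$, hence $\chi=r^2\chi(\cO)-\Delta(E)$) is right, and the order of your reductions is the correct one, since Mehta--Ramanathan requires an ample polarization.

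The one step that does not work as written is the reduction from nef to ample. Continuity of $\Delta(E)\cdot H^{n-2}$ in $H$ is not enough: slope-semistability with respect to $H$ does not pass to the approximating ample classes $H+tA$, and your parenthetical claim that it does when $H$ is semiample is unjustified (pulling back from an ample class only helps when the morphism is finite, in which case $H$ is already ample). The standard repairs are either Miyaoka's original argument or Langer's: for small $t$ take the Harder--Narasimhan filtration of $E$ with respect to $H+tA$ (which stabilizes along a subsequence by boundedness), apply the ample case to each graded piece, and control the cross terms in the expansion of $\Delta(E)$ by the Hodge index theorem, using that the $H$-slopes of the graded pieces all agree by $H$-semistability of $E$. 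This gap is not cosmetic for the present paper: in Theorem \ref{thm-Chineq} the polarization $\sigma_S^*(-(K_S+\Delta))$ is ample because $\sigma_S$ is finite, so the ample case suffices there, but Theorem \ref{thm-ChineqCY} is stated for an arbitrary nef $L$, so the nef case of Bogomolov--Gieseker is genuinely invoked.
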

Now we let $\scr{E}$ to be the extension of orbifold tangent sheaf $T_S(-\log(\Delta))$ by $\cO_S$ with the extension class $c_1(-(K_S+\Delta))$. Let $\sigma_S: S'\rightarrow S$ be the ramified covering as in the commutative diagram \eqref{eq-CDlogsmooth}. Then $\sigma_S^*\scr{E}=\scr{E}_{S'}$ where
$\scr{E}_{S'}$ is the extension of $\sigma_S^*T_S(-\log(\Delta))=T_{S'}(-\log(B))$ by $\cO_{S'}$ with the extension class $\sigma_S^*c_1(-(K_S+\Delta))=c_1(-(K_{S'}+B))$ as in Theorem \ref{thm-ETstable}. 
By Theorem \ref{thm-semi},
$\scr{E}_{S'}$ is slope semistable with respect to $-(K_{S'}+B)$. On the other hand, we get (cf. \cite[pp.29]{GT16}):
\[
\Delta(\scr{E}_{S'})=2 (n+1) \sigma_S^*(c_2(S, \Delta)) - n \sigma_S^*(c_1(S, \Delta))^2.
\]
Based on the fact in Proposition \ref{prop-sameChern}, Theorem \ref{thm-Chineq} follows immediately from Theorem \ref{thm-semi} by applying Theorem \ref{thm-Bog} to $\scr{E}_{S'}$. 

Theorem \ref{thm-ChineqCY} follows the same argument by replacing Theorem \ref{thm-main} by Theorem \ref{thm-mainlogCY}, and noticing that $c_1(S, \Delta)=0$ if $(S, \Delta)$ is log-Calabi-Yau.

\begin{rem}\label{rem-ChineqCY}
We end this paper by making some general remarks of the above proof of the Miyaoka-Yau inequalities. Firstly one can weaken the log-smooth assumption under suitable situations. For example one can replace the log smooth assumption by the conditions (i) or (ii) in \cite[Theorem B]{GT16} at least in the log-Calabi-Yau case. So one sees that the advantage of the above proofs is that we don't need the detailed information of the curvatures of the singular K\"{a}hler-Einstein metrics. The disadvantage however is that the equality case is not immediately clear. However methods used \cite{GKPT15} for characterizing the identity case of Miyaoka-Yau inequalities on singular canonically polarized varieties might also be useful for studying the identity case in the singular Fano/Calabi-Yau case. 

On the other hand, if one tries to prove the Miyaoka-Yau type inequality directly using K\"{a}hler-Einstein metrics as in Yau's proof, one needs enough regularity of the singular K\"{a}hler-Einstein 
metrics to identify the correction to the $L^2$-norm of the traceless Riemannian curvature associated to any singular point, which is in general quite difficult at present for general log canonical pairs.
In the case when $(S, \Delta)$ is log smooth with irreducible $\Delta$, Song-Wang \cite{SW12} used the regularity results (e.g. polyhomogeneity) from \cite{JMR16}. More generally when $\Delta$ is simple normal crossing, the polyhomogeneity property for K\"{a}hler-Einstein metrics on $(S, \Delta)$ was announced by Rubinstein-Mazzeo. 
In the case of log canonical surfaces, Borbon-Spotti conjectured in \cite{BS17} that the correction term associated to any point is precisely one less than the volume density of the K\"{a}hler-Einstein metric and, as mentioned in the introduction, that the volume densities should match Langer's local Euler numbers (at least for log terminal surface singularities). The main part \cite{BS17} is to study the behavior of K\"{a}hler-Einstein metrics near the singularities when the boundary divisors have good configurations (more precisely when the metric cone at any point is isomorphic to the germ of the point itself).

\end{rem}

\vspace{9mm}

\noindent
Department of Mathematics, Purdue University, West Lafayette, IN 47907-2067

\noindent
{\it E-mail address:} li2285@purdue.edu

\end{document}